\newtheorem{thm}{Theorem}[section]
\newtheorem{prop}[thm]{Proposition}
\newtheorem{lemma}[thm]{Lemma}
\newtheorem{cor}[thm]{Corollary}
\theoremstyle{remark}
\newtheorem{remark}[thm]{Remark}
\newcommand{\id}{{\rm{id}}}
\newcommand{\BN}{\mathbf N}
\newcommand{\BC}{\mathbf C}
\newcommand{\BB}{\mathbf B}
\newcommand{\la}{\langle}
\newcommand{\ra}{\rangle}
\newcommand{\Ind}{{\rm{Ind}}}
\newtheorem{Def}{Definition}[section]
\title{The strong Morita equivalence for inclusions of $C^*$-algebras and conditional expectations
for equivalence bimodules}
\author{Kazunori Kodaka and Tamotsu Teruya}
\address{Department of Mathematical Sciences, Faculty of Science, Ryukyu
\endgraf
University, Nishihara-cho, Okinawa, 903-0213, Japan}
\address{Faculty of Education, Gunma University, 4-2 Aramaki-machi, Maebashi City,
\endgraf
Gunma, 371-8510, Japan}
\address{\sl{E-mail address}: \rm{kodaka@math.u-ryukyu.ac.jp}}
\address{\sl{E-mail address}: \rm{teruya@gunma-u.ac.jp}}
\begin{document}
\maketitle
\begin{abstract}
We shall introduce the notions of the strong Morita equivalence for unital inclusions of unital $C^*$-algebras
and conditional expectations from an equivalence bimodule onto its closed subspace with respect to conditional
expectations from unital $C^*$-algebras onto their unital $C^*$-subalgebras.
Also, we shall study their basic properties.
\end{abstract}

\section{Introduction}\label{sec:intro}In the previous paper \cite {KT3:equivalence}, following Jansen and Waldmann \cite {JW:covariant},
we introduced the notion of the strong Morita equivalence for coactions of a finite dimensional
$C^*$-Hopf algebra on unital $C^*$-algebras. Modifying this notion, we shall introduce the notion of the
strong Morita equivalence for unital inclusions of unital $C^*$-algebras. Also, we shall
introduce the notion of conditional expectations from an equivalence bimodule onto its closed subspace
with respect to conditional expectations from unital $C^*$-algebras onto their unital $C^*$-subalgebras.
Furthermore, we shall study their basic properties.
\par
To specify, let $A$ and $B$ be
unital $C^*$-algebras and $H$ a finite dimensional
$C^*$-Hopf algebra. Let $H^0$ be its dual $C^*$-Hopf algebra. Let $\rho$ and $\sigma$ be
coactions of $H^0$ on $A$ and $B$, respectively.
Then we can obtain the unital inclusions $A\subset A\rtimes_{\rho}H$ and $B\subset B\rtimes_{\sigma}H$
and the canonical conditional expectations $E_1^{\rho}$ and $E_1^{\sigma}$ from $A\rtimes_{\rho}H$
and $B\rtimes_{\sigma}H$ onto $A$ and $B$, respectively.
We suppose that $\rho$ and $\sigma$ are strongly Morita equivalent.
Then there are
an $A-B$-equivalence bimodule $X$ and a coaction $\lambda$ of $H^0$
on $X$ with respect to $(A, B, \rho, \sigma)$. Let $E^{\lambda}$ be the linear map
from $X\rtimes_{\lambda}H$ onto $X$ defined by
$$
E_1^{\lambda}(x\rtimes_{\lambda}h)=\tau(h)x
$$
for any $x\in X$, $h\in H$, where $\tau$ is the Haar trace on $H$.
\par
In Section \ref{sec:definition}, we give the notion of the strong Morita equivalence for unital
inclusions of unital $C^*$-algebras so that $A\subset A\rtimes_{\rho}H$
and $B\subset B\rtimes_{\sigma}H$ are strongly Morita equivalent.
We also give the notion of conditional expectations from an equivalence bimodule onto
its closed subspace with respect to conditional expectations from unital $C^*$-algebras onto their
unital $C^*$-subalgebras so that $E^{\lambda}$
is a conditional expectation from $X\rtimes_{\lambda}H$ onto $X$ with respect to $E^A$ and $E^B$.
\par
In Sections \ref {sec:one-sided}, \ref {sec:example} and \ref{sec:linking}, we study
the properties of conditional expectations from an equivalence bimodule onto
its closed subspace with respect to conditional expectations from unital $C^*$-algebras
onto their unital $C^*$-subalgebras. In Sections \ref{sec:upward}, \ref{sec:duality} and \ref{sec:downward}, we give the
upward and downward basic constructions for a conditional expectation from
an equivalence bimodule onto its closed subspace and a duality result which are similar to the ordinary
basic constructions for conditional expectations from unital $C^*$-algebras onto
their unital $C^*$-subalgebras. Furthermore, in Section \ref{sec:between}, we study a relationship
between the upward basic construction and the downward basic construction for the conditional expectation
from an equivalence bimodule onto its closed subspace. Finally
In Section \ref{sec:paragroup}, we show that the strong Morita equivalence for unital inclusions
of unital $C^*$-algebras preserves their paragroups.
\par
Let $A$ and $B$ be $C^*$-algebras and $X$ an $A-B$-bimodule.
Then we denote its left $A$-action and right $B$-action
on $X$ by $a\cdot x$ and $x\cdot b$ for any $a\in A$, $b\in B$ and $x\in X$.
For a $C^*$-algebra $A$, we denote by $M_n (A)$ the $n\times n$-matrix algebra over $A$ and
$I_n$ denotes the unit element in $M_n (\BC)$. We identify $M_n (A)$ with $A\otimes M_n (\BC)$.

\section{The strong Morita equivalence and basic properties}\label{sec:definition}
We begin this section with the following definition: Let $A, B, C$ and $D$ be $C^*$-algebras.

\begin{Def}\label{def:inclusion}Inclusions of $C^*$-algebras $A\subset C$ and $B\subset D$
with $\overline{AC}=C$ and $\overline{BD}=D$ are
\it
strongly Morita equivalent
\rm
if there are a $C-D$-equivalence bimodule $Y$ and its closed subspace $X$ satisfying the
following conditions:
\newline
$(1)$ $a\cdot x\in X$, ${}_C \la x, y \ra\in A$ for any $a\in A$, $x, y\in X$ and
$\overline{{}_C \la X, X \ra}=A$, $\overline{{}_C \la Y, X \ra}=C$,
\newline
$(2)$ $x\cdot b\in X$, $\la x, y \ra_B \in B$ for any $b\in B$, $x, y\in X$ and
$\overline{\la X, X \ra_D }=B$, $\overline{ \la Y, X \ra_D }=D$.
\newline
Then we say that the inclusion $A\subset C$ are strongly Morita equivalent to the inclusion $B\subset D$ with respect to
the $C-D$-equivalent bimodule $Y$ and its closed subspace $X$. We note that $X$ can be regarded as an
$A-B$-equivalence bimodule. 
\end{Def}

\begin{remark}\label{remark:closure} (1) If $Y$ is a $C-D$-equivalence bimodule,
$\overline{C\cdot Y}=\overline{Y\cdot D}=Y$ by Brown, Mingo and Shen \cite [Proposition 1.7]{BMS:quasi}.
\newline
(2) If strongly Morita equivalent inclusions $A\subset C$ and $B\subset D$ are unital
inclusions of unital $C^*$-algebras, we do not need to take the closure in Definition \ref{def:inclusion}.
\end{remark}
\begin{prop}\label{prop:relation}The strong Morita equivalence for inclusions of
$C^*$-algebras is equivalence relation.
\end{prop}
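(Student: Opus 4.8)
The plan is to verify the three defining properties of an equivalence relation separately, modelling each construction on the corresponding one for ordinary equivalence bimodules.

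For reflexivity, I would exhibit $A\subset C$ as strongly Morita equivalent to itself by taking $Y=C$, viewed as the standard $C-C$-equivalence bimodule with ${}_C\la c, d\ra=cd^*$ and $\la c, d\ra_C=c^*d$, together with the closed subspace $X=A$. Conditions $(1)$ and $(2)$ then reduce to the identities $\overline{AA^*}=A$ and $\overline{CA^*}=\overline{CA}=(\overline{AC})^*=C$ and their right-handed analogues, all of which follow from the hypothesis $\overline{AC}=C$ and the existence of an approximate unit in $A$.

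For symmetry, suppose $A\subset C$ is strongly Morita equivalent to $B\subset D$ with respect to $Y$ and $X$. I would pass to the dual bimodule $\widetilde{Y}$, the $D-C$-equivalence bimodule with actions $d\cdot\widetilde{y}=\widetilde{y\cdot d^*}$, $\widetilde{y}\cdot c=\widetilde{c^*\cdot y}$ and inner products ${}_D\la\widetilde{x},\widetilde{y}\ra=\la x, y\ra_D$, $\la\widetilde{x},\widetilde{y}\ra_C={}_C\la x, y\ra$, and take $\widetilde{X}=\{\widetilde{x}:x\in X\}$ as its closed subspace. Conditions $(1)$ and $(2)$ for $(\widetilde{Y},\widetilde{X})$ are then exactly conditions $(2)$ and $(1)$ for $(Y,X)$, so symmetry is immediate.

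The substance of the proposition, and the step I expect to be the main obstacle, is transitivity. Assume $A\subset C$ is strongly Morita equivalent to $B\subset D$ with respect to $Y_1, X_1$, and $B\subset D$ is strongly Morita equivalent to $E\subset F$ with respect to $Y_2, X_2$. I would form the interior tensor product $Y=Y_1\otimes_D Y_2$, a $C-F$-equivalence bimodule, and propose $X=\overline{X_1\otimes_B X_2}$, the closed linear span in $Y$ of the elementary tensors $x_1\otimes x_2$ with $x_i\in X_i$. The first point to settle is that this is a legitimate closed subspace: the $A-E$-equivalence bimodule $X_1\otimes_B X_2$ maps into $Y$, and since $\la x_1, y_1\ra_B$ agrees with $\la x_1, y_1\ra_D$ for $x_1, y_1\in X_1$ by condition $(2)$ for the first pair, the $E$-valued inner product of $X_1\otimes_B X_2$ is the restriction of the $F$-valued inner product of $Y$, so the map is isometric and $X$ is well defined. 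The membership conditions then follow from the tensor-product formulas ${}_C\la x_1\otimes x_2, y_1\otimes y_2\ra={}_C\la x_1\cdot{}_D\la x_2, y_2\ra, y_1\ra$ and $\la x_1\otimes x_2, y_1\otimes y_2\ra_F=\la x_2, \la x_1, y_1\ra_D\cdot y_2\ra_F$ by chasing an element through the subalgebras: for $x_i, y_i\in X_i$ one has ${}_D\la x_2, y_2\ra\in B$, hence $x_1\cdot{}_D\la x_2, y_2\ra\in X_1$, hence ${}_C\la x_1\otimes x_2, y_1\otimes y_2\ra\in A$, and dually for the $B$-valued inner product. The density statements $\overline{{}_C\la X, X\ra}=A$ and $\overline{{}_C\la Y, X\ra}=C$ reduce, using $\overline{{}_D\la X_2, X_2\ra}=B$ with $\overline{X_1\cdot B}=X_1$ (respectively $\overline{{}_D\la Y_2, X_2\ra}=D$ with $\overline{Y_1\cdot D}=Y_1$, the latter from Remark \ref{remark:closure}), to the corresponding density statements for the first pair, and the right-handed conditions are handled symmetrically. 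The genuine difficulty here is bookkeeping: keeping the four module actions and the two two-sided inner products straight across the balanced tensor product, and confirming at each stage that the intermediate element lands in the correct subalgebra or subspace so that the restricted inner products on $X$ indeed take values in $A$ and in $B$.
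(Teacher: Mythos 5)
Your proposal is correct and follows essentially the same route as the paper: the paper also dismisses reflexivity and symmetry as clear and proves transitivity by forming the closed span of elementary tensors $x\otimes z$ inside the interior tensor product $Y\otimes_D W$, verifying the membership and density conditions via the same inner-product identities you list (e.g.\ ${}_C\la x_1\otimes z_1, x_2\otimes z_2\ra={}_C\la x_1\cdot{}_D\la z_1,z_2\ra, x_2\ra$ together with $\overline{X\cdot B}=X$ and $\overline{Y\cdot D}=Y$). Your explicit constructions for reflexivity (via $C$ with subspace $A$) and symmetry (via the dual bimodule), and your remark that the restricted inner products agree so the span is isometrically the tensor product $X_1\otimes_B X_2$, are correct details the paper leaves implicit.
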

\begin{proof}It suffices to show the transitivity since the other conditions clearly
hold. Let $A\subset C$ and $B\subset D$ and $K\subset L$ be inclusions of
$C^*$-algebras. We suppose that $A\subset C$ is strongly
Morita equivalent to $B\subset D$ with respect to a $C-D$-equivalence bimodule
$Y$ and its closed subspace $X$ and that $B\subset D$ is strongly Morita
equivalent to $K\subset L$ with respect to a $D-L$-equivalence bimodule
$W$ and its closed subspace $Z$. We consider the closed subspace of $Y\otimes_D W$
spanned by the set
$$
\{ x\otimes z\in Y\otimes_D W \, | \, x\in X, \quad z\in Z \} .
$$
We denote it by $X\otimes_D Z$. For any $x_1 , x_2 \in X$, $z_1 , z_2 \in Z$ and
$a\in A$, $k\in K$,
\begin{align*}
a\cdot (x_1 \otimes z_1 ) &=(a\cdot x_1 )\otimes z_1 \in X\otimes_D Z , \\
(x_1 \otimes z_1 )\cdot k & =x_1 \otimes (z_1 \cdot k) \in X\otimes_D Z , \\
{}_C \la x_1 \otimes z_1 \, ,x_2 \otimes z_2 \ra & ={}_C \la x_1 \cdot {}_D \la z_1 , \, z_2 \ra ,\, x_2 \ra
={}_C \la x_1 \cdot {}_B \la z_1 , \, z_2 \ra ,\, x_2 \ra \\
&  ={}_A \la x_1 \cdot {}_B \la z_1 , \, z_2 \ra ,\, x_2 \ra \in A , \\
\la x_1 \otimes z_1 \, , x_2 \otimes z_2 \ra_L & =\la z_1, \, \la x_1, \, x_2 \ra_D \cdot z_2 \ra_L 
=\la z_1, \, \la x_1, \, x_2 \ra_B \cdot z_2 \ra_L \\
& =\la z_1, \, \la x_1, \, x_2 \ra_B \cdot z_2 \ra_K \in K .
\end{align*}
Also, by Definition \ref{def:inclusion} and Remark \ref{remark:closure}, 
\begin{align*}
\overline{{}_C \la X\otimes_D Z, \, X\otimes_D Z \ra} & =\overline{{}_C \la X\cdot {}_B  \la Z, \, Z \ra , \, X \ra}
=\overline{{}_A \la X\cdot B , \, X \ra} =\overline{{}_A \la X , \, X \ra} =A , \\
\overline{\la X\otimes_D Z , \, X\otimes_D Z \ra_L} & =\overline{\la Z, \, \la X, \, X \ra_B\cdot Z \ra_L}
=\overline{\la Z, \, B\cdot Z \ra_K} =\overline{\la Z, \, Z \ra_K }=K , \\
\overline{{}_C \la Y\otimes_D W, \, X\otimes_D Z \ra} & =\overline{{}_C \la Y\cdot {}_D  \la W, \, Z \ra , \, X \ra}
=\overline{{}_C \la Y\cdot D , \, X \ra} =\overline{{}_C \la Y , \, X \ra} =C , \\
\overline{\la Y\otimes_D W , \, X\otimes_D Z \ra_L } & =\overline{\la W, \, \la Y, \, X \ra_D \cdot Z \ra_L}
=\overline{\la W, \, D\cdot Z \ra_L} =\overline{\la D\cdot W, \,  Z \ra_L} \\
& =\overline{\la W, \, Z \ra_L} =L .
\end{align*}
Therefore, we obtain the conclusion.
\end{proof}
Let $A\subset C$ and $B\subset D$ be unital inclusions of unital $C^*$-algebras which are
strongly Morita equivalent with respect to a $C-D$-equivalence bimodule $Y$ and its closed subspace $X$.
Let $E^A$ and $E^B$ be conditional expectations from $C$ and $D$ onto $A$ and $B$,
respectively. Let $E^X$ be a linear map
from $Y$ onto $X$.

\begin{Def}\label{def:expectation}With above notations,
we say that $E^X$ is a
\sl
conditional expectation
\rm
from $Y$ onto $X$ with respect to $E^A$ and $E^B$
if $E^X$ satisfies the following conditions:
\newline
$(1)$ $E^X (c\cdot x)=E^A (c)\cdot x$ for any $c\in C$, $x\in X$,
\newline
$(2)$ $E^X (a\cdot y)=a\cdot E^X (y)$ for any $a\in A$, $y\in Y$,
\newline
$(3)$ $E^A ({}_C \la y, x \ra)={}_C \la E^X (y), x \ra$ for any $x\in X$, $y\in Y$,
\newline
$(4)$ $E^X (x\cdot d)=x\cdot E^B (d)$ for any $d\in D$ $x\in X$,
\newline
$(5)$ $E^X (y\cdot b)=E^X (y)\cdot b$ for any $b\in B$, $y\in Y$,
\newline
$(6)$ $E^B (\la y, x \ra_D )=\la E^X (y), x \ra_D$ for any $x\in X$, $y\in Y$.
\end{Def}
By Definition \ref{def:inclusion}, we can see that
$E^A ({}_C \la y, x \ra)={}_A \la E^X (y), x \ra$ for any $x\in X$, $y\in Y$ and that
$E^B (\la y, x \ra_D )=\la E^X (y), x \ra_B$ for any $x\in X$, $y\in Y$.

Let $A\subset C$ and $B\subset D$ be unital inclusions of unital $C^*$-algebras, which
are strongly Morita equivalent with respect to a $C-D$-equivalence bimodule $Y$ and
its closed subspace $X$. By Kajiwara and Watatani \cite [Lemma 1.7 and Corollary 1.28]{KW1:bimodule},
there are elements $x_1, \dots, x_n\in X$ such that $\sum_{i=1}^n \la x_i , x_i \ra_B =1$.
We consider $X^n$ as an $M_n (A)-B$-equivalence bimodule in the evident way and
let $\overline{x}=(x_1 , x_2 , \dots , x_n )\in X^n$. Then $\la \overline{x}, \, \overline{x} \ra_B =1$.
Let $p={}_{M_n (A)} \la \overline{x} , \, \overline{x} \ra$ and $z={}_{M_n (A)} \la \overline{x} , \, \overline{x} \ra 
\cdot\overline{x}$.
Also, let $\Psi_B$ be the map from $B$ to $M_n (A)$
defined by
$$
\Psi_B (b)={}_{M_n (A)} \la z\cdot b, \, z \ra=[{}_A \la x_i b , \, x_j \ra]_{ij=1}^n
$$
for any $b\in B$.
Then $p$ is a full projection in $M_n (A)$, that is, $M_n (A)pM_n (A)=M_n (A)$
and $\Psi_B $ is an isomorphism of $B$ onto $pM_n (A)p$ by the proof of
Rieffel \cite [Proposition 2.1]{Rieffel:rotation}. We repeat the above discussions
for the $C-D$-equivalence bimodule $Y$ in the following way:
We note that
$$
\sum_{i=1}^n \la x_i \, , x_i \ra_D =\sum_{i=1}^n \la x_i , \, x_i \ra_B =1 .
$$
We consider $Y^n$ as an $M_n (C)-D$-equivalence bimodule in the evident way.
Then $\overline{x}=(x_1 , \dots, x_n )\in Y^n$ and
\begin{align*}
p & ={}_{M_n (A)} \la \overline{x}, \, \overline{x} \ra ={}_{M_n (C)} \la \overline{x}, \, \overline{x} \ra \in M_n (C) , \\
z & ={}_{M_n (A)} \la \overline{x}, \, \overline{x} \ra \cdot \overline{x}
={}_{M_n (C)} \la \overline{x}, \, \overline{x} \ra \cdot \overline{x}\in Y^n .
\end{align*}
Let $\Psi_D$ be the map from $D$ to $M_n (C)$ defined by
$$
\Psi_D (d)={}_{M_n (C)} \la z\cdot d, \, z \ra
$$
for any $d\in D$. By the proof of \cite [Proposition 2.1]{Rieffel:rotation} $p$ is a full projection
in $M_n (C)$, that is, $M_n (C)pM_n (C)=M_n (C)$ and $\Psi_D$ is an isomorphism of $D$ onto $pM_n (C)p$.
Also, we see that $\Psi_B =\Psi_D |_B$ by the definitions of $\Psi_B$ and $\Psi_D$.
Let $\Psi_X$ be the map from $X$ to $M_n (A)$ defined by
$$
\Psi_X (x) = \begin{bmatrix} {}_A \la x, \, x_1 \ra & {}_A \la x, \, x_2 \ra & \ldots & {}_A \la x, \, x_n \ra \\
0 & 0 & \ldots & 0 \\
\vdots & \vdots & \ddots & \vdots \\
0 & 0 & \ldots & 0 \end{bmatrix}_{n\times n}
$$
for any $x\in X$. Let $f=\begin{bmatrix} 1 & 0 & \ldots & 0 \\
0 & 0 & \ldots & 0 \\
\vdots & \vdots & \ddots & \vdots \\
0 & 0 & \ldots & 0 \end{bmatrix}_{n\times n}$.

\begin{lemma}\label{lem:bijection}With the above notations, $\Psi_X$ is a bijective linear map
from $X$ onto $(1\otimes f)M_n (A)p$.
\end{lemma}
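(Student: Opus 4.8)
The plan is to reduce everything to a single frame identity and then to identify the target space $(1\otimes f)M_n(A)p$ concretely. First I would record the reconstruction formula
$$
\sum_{j=1}^n {}_A \la x, x_j \ra \cdot x_j = x \qquad (x\in X),
$$
which follows from the imprimitivity relation ${}_A\la x, x_j\ra \cdot x_j = x \cdot \la x_j, x_j\ra_B$ together with $\sum_{j=1}^n \la x_j, x_j\ra_B = 1$. Linearity of $\Psi_X$ is then immediate from the left-linearity of ${}_A\la\cdot,\cdot\ra$ in its first variable.

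Next I would pin down the target. Since $1\otimes f$ is the matrix unit $e_{11}$ (with scalar entries), $(1\otimes f)M_n(A)$ is exactly the set of matrices whose only possibly nonzero row is the first; as $\Psi_X(x)$ has precisely this shape, $(1\otimes f)\Psi_X(x)=\Psi_X(x)$. To see that the right factor $p={}_{M_n(A)}\la\overline{x},\overline{x}\ra=[{}_A\la x_i, x_j\ra]_{ij}$ is also absorbed, I would compute $\Psi_X(x)p$: its $(1,k)$-entry is $\sum_j {}_A\la x, x_j\ra \cdot {}_A\la x_j, x_k\ra = {}_A\la \sum_j {}_A\la x, x_j\ra \cdot x_j, x_k\ra = {}_A\la x, x_k\ra$ by the frame identity, so $\Psi_X(x)p=\Psi_X(x)$ and hence $\Psi_X(X)\subseteq (1\otimes f)M_n(A)p$.

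For injectivity, $\Psi_X(x)=0$ forces ${}_A\la x, x_j\ra=0$ for all $j$, whence $x=\sum_j {}_A\la x, x_j\ra\cdot x_j=0$ again by the frame identity. For surjectivity, given $\eta\in (1\otimes f)M_n(A)p$ I would use $(1\otimes f)\eta=\eta$ and $\eta p=\eta$ (both valid since $f$ and $p$ are idempotent) to write $\eta$ with first row $(a_1,\dots,a_n)$, $a_k\in A$, subject to the constraint $\sum_j a_j\cdot {}_A\la x_j, x_k\ra=a_k$ coming from $\eta p=\eta$. Setting $x=\sum_j a_j\cdot x_j\in X$ then gives ${}_A\la x, x_k\ra=\sum_j a_j\cdot {}_A\la x_j, x_k\ra=a_k$, that is, $\Psi_X(x)=\eta$.

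I expect the only genuine subtlety to be the exact identification of the range: one must check not merely that $\Psi_X(x)$ lands in the first-row subspace $(1\otimes f)M_n(A)$, but that right multiplication by $p$ is automatically built in, and conversely that the constraint $\eta p=\eta$ is precisely what makes the candidate preimage $\sum_j a_j\cdot x_j$ map back to $\eta$. Both directions collapse to the reconstruction formula, so once that identity is in hand the remaining computations are routine.
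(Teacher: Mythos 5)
Your proposal is correct and follows essentially the same route as the paper: everything reduces to the reconstruction formula $\sum_j {}_A\la x, x_j\ra\cdot x_j = x\cdot\sum_j\la x_j,x_j\ra_B = x$, which gives $\Psi_X(x)p=\Psi_X(x)$ and controls both injectivity and surjectivity. The paper packages your injectivity/surjectivity argument as an explicit two-sided inverse $\chi(y)=\sum_{i,j}y_i\,{}_A\la x_i,x_j\ra\cdot x_j$, which is exactly your candidate preimage $\sum_j a_j\cdot x_j$ written in terms of the unreduced first row, so the two verifications coincide in substance.
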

\begin{proof}It is clear that $\Psi_X$ is linear and that $(1\otimes f)\Psi_X (x)=\Psi_X (x)$ for any $x\in X$.
We note that $p=[{}_A \la x_i , \, x_j \ra ]_{i,j=1}^n$. Then for any $x\in X$
\begin{align*}
\Psi_X (x)p & =\begin{bmatrix} \sum_{i=1}^n {}_A \la x, \, x_i \ra \,{}_A \la x_i , \, x_1 \ra  & \ldots
& \sum_{i=1}^n {}_A \la x, \, x_i \ra \, {}_A \la x_i , \, x_n \ra \\
0 & \ldots & 0 \\
\vdots & \ddots & \vdots \\
0 & \ldots & 0 \end{bmatrix}_{n\times n} .
\end{align*}
Here for $j=1,2,\dots, n$
$$
\sum_{i=1}^n {}_A \la x, \, x_i \ra \,{}_A \la x_i , \, x_j \ra =\sum_{i=1}^n {}_A \la {}_A \la x \, , x_i  \ra \cdot x_i , \, x_j \ra
=\sum_{i=1}^n {}_A \la x \cdot \la x_i , \, x_i \ra_B , \, x_j \ra 
={}_A \la x, \, x_j \ra .
$$
Thus we can see that $\Psi_X (x)p=\Psi_X (x)$ for any $x\in X$. Hence $\Psi_X$ is the linear map from
$X$ to $(1\otimes f)M_n (A)p$. Let $y\in (1\otimes f)M_n (A)p$. Then we can write that
$$
y=\begin{bmatrix} y_1 & \ldots & y_n \\
0 & \ldots & 0 \\
\vdots & \ddots & \vdots \\
0 & \ldots & 0 \end{bmatrix}p
=\begin{bmatrix} \sum_{i=1}^n y_i \, {}_A \la x_i , \, x_1 \ra & \ldots & \sum_{i=1}^n y_i  \, {}_A \la x_i , \, x_n \ra \\
0 & \ldots & 0 \\
\vdots & \ddots & \vdots \\
0 & \ldots & 0 \end{bmatrix} ,
$$
where $y_1, \dots , y_n \in A$. Modifying Remark after \cite [Lemma 1.11]{KW1:bimodule},
let $\chi$ be the linear map from $(1\otimes f)M_n (A)p$ to $X$ defined by
$$
\chi(y)=\sum_{ij=1}^n y_i \, {}_A \la x_i, \, x_j \ra \cdot x_j .
$$
Then since $\sum_{j=1}^n \la x_j ,\, x_j \ra_B =1$,
\begin{align*}
& (\Psi_X \circ\chi )(y) \\
& =\begin{bmatrix} {}_A \la \sum_{ij=1}^n y_i \, {}_A  \la x_i , \, x_j \ra \cdot x_j , \, x_1 \ra  & \ldots &
{}_A \la \sum_{ij=1}^n y_i \, {}_A \la x_i , \, x_j \ra \cdot x_j , \,x_n \ra  \\
0 & \ldots & 0 \\
\vdots & \ddots & \vdots \\
0 & \ldots & 0 \end{bmatrix} \\
& =\begin{bmatrix} {}_A \la \sum_{ij=1}^n y_i \cdot x_i  \cdot \la  x_j , \,  x_j \ra_B , \, x_1 \ra  & \ldots &
{}_A \la \sum_{ij=1}^n y_i \cdot x_i \cdot \la x_j , \,  x_j \ra_B , \, x_n \ra  \\
0 & \ldots & 0 \\
\vdots & \ddots & \vdots \\
0 & \ldots & 0 \end{bmatrix} \\
& =y.
\end{align*}
Also,
\begin{align*}
(\chi\circ\Psi_X )(x) & =\sum_{ij=1}^n {}_A \la x,\, x_i \ra \, {}_A \la x_i , \, x_j \ra \cdot x_j
=\sum_{ij=1}^n {}_A \la x, \, x_i \ra \cdot x_i \cdot \la x_j , \, x_j \ra_B \\
& =\sum_{i=1}^n {}_A \la x, \, x_i \ra \cdot x_i =\sum_{i=1}^n x \cdot \la x_i , \, x_i \ra_B =x .
\end{align*}
Thus we obtain the conclusion.
\end{proof}

\begin{lemma}\label{lem:condition}With the above notations, $\Psi_X$ satisfies the following:
\newline
$(1)$ $\Psi_X (a\cdot x)=a\cdot\Psi_X (x)$ for any $a\in A$, $x\in X$,
\newline
$(2)$ $\Psi_X (x\cdot b)=\Psi_X (x)\cdot \Psi_B (b)$ for any $b\in B, x\in X$,
\newline
$(3)$ ${}_A \la \Psi_X (x), \Psi_X (y) \ra ={}_A \la x, \, y \ra$ for any $x,y\in X$,
\newline
where we identify $A$ with $(1\otimes f)M_n (A)(1\otimes f)=A\otimes f$,
\newline
$(4)$ $\la \Psi_X (x), \Psi_X (y) \ra_{pM_n (A)p}=\Psi_B (\la x, \, y \ra_B )$ for any $x, y\in X$.
\end{lemma}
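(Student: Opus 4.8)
The plan is to view $(1\otimes f)M_n(A)p$ as the standard corner equivalence bimodule sitting inside $M_n(A)$: with $e=1\otimes f$ and $g=p$ both projections in $M_n(A)$, the set $eM_n(A)g$ is an $eM_n(A)e$–$gM_n(A)g$-equivalence bimodule whose left action is left matrix multiplication, whose right action is right matrix multiplication, and whose inner products are ${}_A\la \xi, \eta\ra=\xi\eta^*$ (landing in the corner $eM_n(A)e=A\otimes f\cong A$) and $\la \xi, \eta\ra_{pM_n(A)p}=\xi^*\eta$. Once this identification is fixed, each of (1)--(4) reduces to an entrywise matrix computation on the first row of $\Psi_X(x)$, and the only inputs are the equivalence-bimodule identities ${}_A\la \xi, \eta\ra^* ={}_A\la \eta, \xi\ra$ and ${}_A\la \xi, \eta\ra\cdot\zeta =\xi\cdot\la \eta, \zeta\ra_B$, together with the normalization $\sum_{i=1}^n \la x_i, x_i\ra_B =1$.

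For (1), since $a\in A$ acts as $a\otimes f$, the product $(a\otimes f)\Psi_X(x)$ has first row $(a\cdot{}_A\la x, x_1\ra, \dots, a\cdot{}_A\la x, x_n\ra)$, which agrees with the first row of $\Psi_X(a\cdot x)$ because ${}_A\la a\cdot x, x_i\ra =a\cdot{}_A\la x, x_i\ra$. For (2), I would compute the $j$-th first-row entry of $\Psi_X(x)\Psi_B(b)$, namely $\sum_i {}_A\la x, x_i\ra\cdot{}_A\la x_i b, x_j\ra$; the associativity identity rewrites ${}_A\la x, x_i\ra\cdot(x_i b)=x\cdot\la x_i, x_i b\ra_B =x\cdot\la x_i, x_i\ra_B\cdot b$, and summing against $\sum_i\la x_i, x_i\ra_B=1$ collapses the entry to ${}_A\la x\cdot b, x_j\ra$, which is the $j$-th entry of $\Psi_X(x\cdot b)$.

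For (3), I note that $\Psi_X(x)\Psi_X(y)^*$ is supported at the $(1,1)$-entry alone, so the left inner product really lives in $A\otimes f$; that entry equals $\sum_i {}_A\la x, x_i\ra\cdot{}_A\la x_i, y\ra$, where I use ${}_A\la y, x_i\ra^*={}_A\la x_i, y\ra$ to read off the entries of $\Psi_X(y)^*$, and then the associativity identity and the normalization reduce it to ${}_A\la x, y\ra$. For (4), the $(i,j)$-entry of $\Psi_X(x)^*\Psi_X(y)$ is ${}_A\la x_i, x\ra\cdot{}_A\la y, x_j\ra$; reading the associativity identity in the reverse direction gives ${}_A\la x_i\cdot\la x, y\ra_B, x_j\ra ={}_A\la{}_A\la x_i, x\ra\cdot y, x_j\ra ={}_A\la x_i, x\ra\cdot{}_A\la y, x_j\ra$, which is precisely the $(i,j)$-entry of $\Psi_B(\la x, y\ra_B)=[{}_A\la x_i\la x, y\ra_B, x_j\ra]_{ij}$. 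The computations themselves are routine; the only point demanding care is the inner-product bookkeeping on the corner bimodule -- keeping track that the left $A$-valued product is the compression into the $(1,1)$-corner (so that (3) is genuinely an identity in $A$ rather than in $M_n(A)$) and that the matrix adjoint simultaneously transposes indices and applies $\ast$ to entries via ${}_A\la \xi, \eta\ra^*={}_A\la \eta, \xi\ra$. I expect no obstacle beyond this indexing.
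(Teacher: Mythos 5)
Your proposal is correct and follows essentially the same route as the paper: each of (1)--(4) is verified by the same entrywise computation in the corner picture, using the imprimitivity identity ${}_A \la \xi , \eta \ra \cdot \zeta = \xi\cdot\la \eta , \zeta \ra_B$, the relation ${}_A\la \xi, \eta\ra^* = {}_A\la \eta, \xi\ra$, and the normalization $\sum_{i=1}^n \la x_i , x_i \ra_B =1$. No gaps.
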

\begin{proof}(1) Let $a\in A$ and $x\in X$. Then
$$
\Psi_X (a\cdot x)=\begin{bmatrix} {}_A \la a\cdot x, \, x_1 \ra & \ldots & {}_A \la a\cdot x, \, x_n \ra \\
0 & \ldots & 0 \\
\vdots & \ddots & \vdots \\
0 & \ldots & 0 \end{bmatrix}=a\cdot \Psi_X (x) .
$$
Hence we obtain (1).
\newline
(2) Let $b\in B$ and  $x\in X$. Then
\begin{align*}
& \Psi_X (x)\cdot \Psi_B (b) =\begin{bmatrix} {}_A \la x, \, x_1 \ra & \ldots & {}_A \la x, \, x_n \ra \\
0 & \ldots & 0 \\
\vdots & \ddots & \vdots \\
0 & \ldots & 0 \end{bmatrix}_{n\times n} [{}_A \la x_i \cdot b, \, x_j \ra]_{ij=1}^n \\
& =\begin{bmatrix} \sum_{i=1}^n {}_A \la x, \, x_i \ra \, {}_A \la x_i \cdot b, \, x_1 \ra & \ldots & \sum_{i=1}^n {}_A 
\la x, \, x_i \ra \, {}_A \la x_i \cdot b, \, x_n \ra \\
0 & \ldots & 0 \\
\vdots & \ddots & \vdots \\
0 & \ldots & 0 \end{bmatrix}_{n\times n} .
\end{align*}
Here for $j=1,2,\dots,n$,
$$
\sum_{i=1}^n {}_A \la x, \, x_i \ra \, {}_A \la x_i \cdot b, \, x_j \ra
=\sum_{i=1}^n {}_A \la x \cdot \la x_i, \, x_i \ra_B b , \, x_j \ra
={}_A \la x\cdot b, x_j \ra .
$$
Thus we obtain (2).
\newline
(3) Let $x, y\in X$. Then since we identify $A$ with $A\otimes f$,
\begin{align*}
{}_A \la \Psi_X (x), \Psi_X (y) \ra & = \sum_{i=1}^n{}_A \la x, \, x_i \ra\, {}_A \la y, \, x_i \ra^*
=\sum_{i=1}^n {}_A \la x, \, x_i \ra \, {}_A \la x_i , \, y \ra \\
& =\sum_{i=1}^n {}_A \la \, {}_A \la x, \, x_i \ra \cdot x_i , \, y \ra 
=\sum_{i=1}^n {}_A  \la x \cdot \la x_i , \, x_i \ra_B , \, y \ra
={}_A \la x, \, y \ra .
\end{align*}
Hence we obtain (3).
\newline
(4) Let $x, y\in X$. Then
$$
\la \Psi_X (x), \, \Psi_X (y) \ra_{pM_n (A)p} =\Psi_X (x)^* \Psi_X (y)
=[{}_A \la x, \, x_i \ra^* {}_A \la y\, , x_j \ra ]_{ij=1}^n .
$$
On the other hand,
\begin{align*}
\Psi_B (\la x, \, y \ra_B ) & =[{}_A \la x_i \cdot \la x, \, y \ra_B , \, x_j \ra ]_{ij=1}^n
=[{}_A \la {}_A \la x_i , \, x \ra \cdot y, \, x_j \ra ]_{ij}^n \\
& =[{}_A \la x_i , \, x \ra {}_A \la y, \, x_j \ra ]_{ij=1}^n .
\end{align*}
Hence we obtain (4).
\end{proof}

Let $\Psi_Y$ be the map from $Y$ to $M_n (C)$ defined by
$$
\Psi_Y (x)=\begin{bmatrix} {}_C \la x, \, x_1 \ra & \ldots & {}_C \la x, \, x_n \ra \\
0 & \ldots & 0 \\
\vdots & \ddots & \vdots \\
0 & \ldots & 0 \end{bmatrix}_{n\times n}
$$
for any $x\in Y$. 

\begin{cor}\label{cor:restriction}With the above notations, $\Psi_Y$ is a bijective linear map
from $Y$ onto $(1\otimes f)M_n (C)p$ satisfying the following:
\newline
$(1)$ $\Psi_Y (c\cdot x)=c\cdot\Psi_Y (x)$ for any $c\in C$, $x\in Y$,
\newline
$(2)$ $\Psi_Y (x\cdot d)=\Psi_Y (x)\cdot \Psi_D (d)$ for any $d\in D$, $x\in Y$,
\newline
$(3)$ ${}_C \la \Psi_Y (x), \Psi_Y (y) \ra ={}_C \la x, y \ra$ for any $x, y\in Y$,
\newline
where we identify $C$ with $(1\otimes f)M_n (C)(1\otimes f)=C\otimes f$,
\newline
$(4)$ $\la \Psi_Y (x), \Psi_Y (y) \ra_{pM_n (C)p} =\Psi_D (\la x, y \ra_D )$ for any $x, y\in Y$,
\newline
$(5)$ $\Psi_X =\Psi_Y |_X$.
\end{cor}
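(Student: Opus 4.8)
The plan is to observe that Corollary \ref{cor:restriction}, apart from item $(5)$, is precisely the analogue of Lemma \ref{lem:bijection} and Lemma \ref{lem:condition} with the triple $(A, B, X)$ replaced by $(C, D, Y)$ and $\Psi_X, \Psi_B$ replaced by $\Psi_Y, \Psi_D$. All the ingredients used in those two lemmas are already available in the $C$--$D$ setting: the frame relation $\sum_{i=1}^n \la x_i, x_i \ra_D = 1$, together with the identities $p = {}_{M_n (C)} \la \overline{x}, \overline{x} \ra \in M_n (C)$ and $z = {}_{M_n (C)} \la \overline{x}, \overline{x} \ra \cdot \overline{x}$, were recorded in the paragraph preceding Lemma \ref{lem:bijection}. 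Hence the proofs transcribe essentially verbatim, and I would present the argument as such a transcription rather than reworking every matrix entry.

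For bijectivity I would introduce the candidate inverse $\chi \colon (1\otimes f)M_n (C)p \to Y$ defined by $\chi(y) = \sum_{i,j=1}^n y_i \, {}_C \la x_i, x_j \ra \cdot x_j$, in exact parallel with Lemma \ref{lem:bijection}, and check $\Psi_Y \circ \chi = \id$ and $\chi \circ \Psi_Y = \id$ by the same computations, now using $\sum_j \la x_j, x_j \ra_D = 1$ in place of $\sum_j \la x_j, x_j \ra_B = 1$. The reproducing identity $\sum_i {}_C \la x, x_i \ra \, {}_C \la x_i, x_j \ra = {}_C \la x, x_j \ra$ for $x \in Y$ holds for the same reason as its $X$-version in Lemma \ref{lem:bijection}. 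Items $(1)$--$(4)$ then follow from the matrix computations of Lemma \ref{lem:condition} with $A, B$ replaced by $C, D$: the left $C$-action pulls out of the first row to give $(1)$; one has $\Psi_D(d) = [{}_C \la x_i \cdot d, x_j \ra]_{ij=1}^n$, so that the product $\Psi_Y(x)\cdot\Psi_D(d)$ collapses through the frame relation to $\Psi_Y(x\cdot d)$, giving $(2)$; and $(3)$, $(4)$ are the analogous inner-product identities under the identification $C \cong C\otimes f$.

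The only genuinely new point is $(5)$. For $x \in X$ the first row of $\Psi_X(x)$ consists of the entries ${}_A \la x, x_i \ra$, while that of $\Psi_Y(x)$ consists of ${}_C \la x, x_i \ra$; since $x, x_i \in X$, Definition \ref{def:inclusion}$(1)$ guarantees that the $C$-valued inner product restricts to the $A$-valued one on $X$, i.e. ${}_C \la x, x_i \ra = {}_A \la x, x_i \ra$ for every $i$. Reading off the two defining matrices, their entries therefore agree, whence $\Psi_Y|_X = \Psi_X$. I do not expect any real obstacle here: the substance of the corollary was already established in the two lemmas, and the sole item needing attention is this compatibility of the two inner products on $X$, which is exactly the content of Definition \ref{def:inclusion}.
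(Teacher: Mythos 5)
Your proposal is correct and follows the paper's own route: the paper likewise disposes of items $(1)$--$(4)$ and the bijectivity by invoking Lemmas \ref{lem:bijection} and \ref{lem:condition} applied to the $C$--$D$ data (using $\sum_i \la x_i , x_i \ra_D = 1$ and $p = {}_{M_n(C)}\la \overline{x}, \overline{x}\ra$ recorded just before the corollary), and treats $(5)$ as immediate from the definitions of $\Psi_X$ and $\Psi_Y$ together with the agreement of ${}_C\la\cdot,\cdot\ra$ and ${}_A\la\cdot,\cdot\ra$ on $X$. Your write-up merely makes explicit what the paper leaves as "clear," so there is nothing to correct.
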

\begin{proof}It is clear that $\Psi_X =\Psi_Y |_X$ by the definitions of $\Psi_X$ and $\Psi_Y$.
By Lemmas \ref{lem:bijection} and \ref{lem:condition}, we obtain the others.
\end{proof}

Let $A\subset C$ and $B\subset D$ be unital inclusions of unital $C^*$-algebras.
We suppose that $A\subset C$ and $B\subset D$ are strongly Morita equivalent with respect to
a $C-D$-equivalence bimodule $Y$ and its closed subspace $X$. Then by Lemmas \ref{lem:bijection},
\ref{lem:condition} and Corollary \ref{cor:restriction}, we may assume that
$$
B=pM_n (A)p, \quad D=pM_n (C)p, \quad Y=(1\otimes f)M_n (C)p, \quad X=(1\otimes f)M_n (A)p,
$$
where $p$ is a projection in $M_n (A)$ satisfying that $M_n (A)pM_n (A)=M_n (A)$, that is, $p$
is a full in $M_n (A)$
and $n$ is a positive integer. We regard $X$ and $Y$ as an $A-pM_n (A)p$-equivalence bimodule and
a $C-pM_n (C)p$-equivalence bimodule in the usual way.
\par
We consider the following: Let $A\subset C$ be a unital inclusion of unital
$C^*$-algebras and $p$ a  full projection in $M_n (A)$.
Then the inclusion $pM_n (A)p\subset pM_n (C)p$
is strongly Morita equivalent to $A\subset C$ with respect to the $C-pM_n (C)p$-equivalence
bimodule $(1\otimes f)M_n (C)p$ and its closed subspace $(1\otimes f)M_n (A)p$.
Let $E^A$ be a conditional expectation of Watatani index-finite type from $C$ onto $A$.
We denote by $\Ind_W (E^A )$ the Watatani index of $E^A$.
We note that $\Ind_W (E^A )\in C\cap C'$. Let $\{(u_i, u_i^* )\}_{i=1}^N$ be a quasi-basis
for $E^A$. Then $\{(u_i \otimes I_n , \, u_i^* \otimes I_n )\}_{i=1}^N$ is a quasi-basis
for $E^A \otimes\id$, the conditional expextation from $M_n (C)$ onto $M_n (A)$.
Since $p$ is a full projection in $M_n (A)$, there is elements $a_1, \dots, a_K , \, b_1, \cdots, b_K$
in $M_n (A)$ such that $\sum_{i=1}^K a_i pb_i =1_{M_n (A)}$.
Let $E_p^A$ be the conditional expectation from $pM_n (C)p$ onto $pM_n(A)p$
defined by
$$
E_p^A (x)=(E^A \otimes\id)(x)
$$
for any $x\in pM_n (A)p$. Then by routine computations, we can see that
$$
\{(p(u_i \otimes I_n )a_j p , \, pb_j (u_i^* \otimes I_n )p )\}_{i=1,\dots, N, \, j=1,\dots, K}
$$
is a quasi-basis for $E_p^A$. Furthermore,
\begin{align*}
\Ind_W (E_p^A ) & =\sum_{i,j}p(u_i \otimes I_n )a_j pb_j (u_i^* \otimes I_n )p
=\sum_{i}p(u_i u_i^* \otimes I_n )p \\
& =p(\Ind_W (E^A )\otimes I_n )p=(\Ind_W (E^A )\otimes I_n )p .
\end{align*}
Let $F$ be the linear map from $(1\otimes f)M_n (C)p$ onto $(1\otimes f)M_n (A)p$
defined by
$$
F((1\otimes f)xp)=(E^A \otimes\id)((1\otimes f)xp)=(1\otimes f)(E^A \otimes\id)(x)p
$$
for any $x\in M_n (C)$.

\begin{lemma}\label{lem:expectation}With the above notations, $F$ is a conditional
expectation from $(1\otimes f)M_n (C)p$ onto $(1\otimes f)M_n (A)p$ with respect to
$E^A$ and $E_p^A$.
\end{lemma}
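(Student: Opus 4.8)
The plan is to recognize $F$ as nothing more than the restriction to $Y$ of the conditional expectation $\widetilde{E}:=E^A\otimes\id$ from $M_n(C)$ onto $M_n(A)$, and then to read off each of the six conditions of Definition \ref{def:expectation} from the elementary bimodule and $*$-properties of $\widetilde{E}$. First I would record that $\widetilde{E}$ is $*$-preserving and satisfies the $M_n(A)$-bimodule identity $\widetilde{E}(amb)=a\,\widetilde{E}(m)\,b$ for all $a,b\in M_n(A)$ and $m\in M_n(C)$. Since $1\otimes f$ and $p$ both lie in $M_n(A)$, for $y=(1\otimes f)mp\in Y$ this identity gives $\widetilde{E}(y)=(1\otimes f)\widetilde{E}(m)p$, which is exactly $F(y)$; hence $F=\widetilde{E}|_Y$, and in particular $F$ is well defined and maps $Y$ into $(1\otimes f)M_n(A)p=X$. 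The same remark shows $E^A=\widetilde{E}|_{C\otimes f}$ under the identification $C\cong C\otimes f$, and $E_p^A=\widetilde{E}|_{pM_n(C)p}$.

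Next I would spell out the corner-module structure in matrix terms: the left $C$-action on $Y$ is left multiplication by $c\otimes f$, the right $D$-action ($D=pM_n(C)p$) is right multiplication, and the inner products are ${}_C\la y_1,y_2\ra=y_1y_2^{*}$ (valued in $C\otimes f$) and $\la y_1,y_2\ra_D=y_1^{*}y_2$, with the analogous formulas for $X$ over $A$ and $B=pM_n(A)p$. With these conventions each condition becomes a one-line verification. Conditions $(2)$ and $(4)$ follow from left $M_n(A)$-linearity of $\widetilde{E}$, extracting $a\otimes f$, respectively $x\in X\subset M_n(A)$, on the left; conditions $(1)$ and $(5)$ follow from right $M_n(A)$-linearity, extracting $x$, respectively $b\in B\subset M_n(A)$, on the right. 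For $(3)$ I would write ${}_C\la y,x\ra=yx^{*}$ with $x^{*}\in M_n(A)$ and use $E^A=\widetilde{E}|_{C\otimes f}$ together with right $M_n(A)$-linearity to get $\widetilde{E}(yx^{*})=F(y)x^{*}={}_C\la F(y),x\ra$. For $(6)$ I would combine right $M_n(A)$-linearity with $*$-preservation: since $\la y,x\ra_D=y^{*}x$, one has $\widetilde{E}(y^{*}x)=\widetilde{E}(y^{*})x=F(y)^{*}x=\la F(y),x\ra_D$.

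There is no serious analytic obstacle here; the content is entirely bookkeeping. The one place that requires care is keeping the identifications straight — that $c\in C$ acts as $c\otimes f$, that $A,C$ sit inside $M_n(A),M_n(C)$ as the $(1,1)$-corners $A\otimes f$ and $C\otimes f$, and that $F$ simultaneously restricts to $E^A$ on the left coefficient algebra and to $E_p^A$ on the right one — so that when the bimodule identity is invoked the extracted factor genuinely lies in $M_n(A)$ and on the correct side. Once the matrix descriptions of the two actions and the two inner products are fixed, the six conditions drop out mechanically from the bimodule property and $*$-preservation of $E^A\otimes\id$.
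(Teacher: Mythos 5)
Your proof is correct and follows essentially the same route as the paper: both identify $F$ with the restriction of $E^A\otimes\id$ to $(1\otimes f)M_n(C)p$ and verify the six conditions of Definition \ref{def:expectation} from the $M_n(A)$-bimodule property and $*$-preservation of $E^A\otimes\id$, using the matrix descriptions of the actions and inner products. The paper simply writes out each of the six computations explicitly rather than citing the bimodule identity once; the content is identical.
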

\begin{proof}
It suffices to show that $F$ satisfies Conditions (1)-(6) in Definition \ref{def:expectation}.
\newline
(1) For any $c\in C$, $x\in M_n (A)$,
\begin{align*}
F(c\cdot (1\otimes f)xp)& =F((c\otimes f)xp)=F((1\otimes f)(c\otimes I_n )xp) \\
& =(1\otimes f)(E^A \otimes \id)((c\otimes I_n )x)p=(1\otimes f)(E^A (c)\otimes I_n )xp \\
& =E^A (c)\cdot (1\otimes f)xp .
\end{align*}
Thus we obtain Condition (1) in Definition \ref{def:expectation}.
\newline
(2) For any $a\in A$, $y\in M_n (C)$,
\begin{align*}
F(a\cdot (1\otimes f)yp) & =F((1\otimes f)(a\otimes I_n )yp)=(1\otimes f)(E^A \otimes \id)((a\otimes I_n )y)p \\
& =a\cdot (1\otimes f)(E^A \otimes\id)(y)p=a\cdot F((1\otimes f)yp) .
\end{align*}
Thus we obtain Condition (2) in Definition \ref{def:expectation}.
\newline
(3) For any $x\in M_n (A), y\in M_n (C)$,
\begin{align*}
{}_C \la F((1\otimes f)yp), \, (1\otimes f)xp \ra & ={}_C \la (1\otimes f)(E^A \otimes\id)(y)p, \, (1\otimes f)xp \ra \\
& =(1\otimes f)(E^A \otimes\id)(y)px^* (1\otimes f) \\
& =(E^A \otimes\id)((1\otimes f)ypx^* (1\otimes f)) \\
& =(E^A \otimes\id)({}_C \la (1\otimes f)yp, \, (1\otimes f)xp \ra )
\end{align*}
since we identify $C$ with $(1\otimes f)M_n (C)(1\otimes f)=C\otimes f$.
Thus we obtain Condition (3) in Definition \ref{def:expectation}.
\newline
(4) For any $y\in M_n (C)$, $x\in M_n (A)$,
\begin{align*}
F((1\otimes f)xp\cdot pyp) & =F((1\otimes f)xpyp)=(1\otimes f)(E^A \otimes\id)(xpy)p \\
& =(1\otimes f)xp(E^A \otimes\id)(y)p=(1\otimes f)xp\cdot E_p^A (pyp) .
\end{align*}
Thus we obtain Condition (4) in Definition \ref{def:expectation}.
\newline
(5) For any $x\in M_n (A)$, $y\in M_n (C)$,
\begin{align*}
F((1\otimes f)yp\cdot pxp) & =F((1\otimes f)ypxp)=(1\otimes f)(E^A \otimes \id)(ypx)p \\
& =(1\otimes f)(E^A \otimes\id)(y)p\cdot pxp =F((1\otimes f)yp)\cdot pxp.
\end{align*}
Thus we obtain Condition (5) in Definition \ref{def:expectation}.
\newline
(6) For any $x\in M_n (A), y\in M_n (C)$,
\begin{align*}
\la F((1\otimes f)yp), \, (1\otimes f)xp \ra_{pM_n (C)p} & =p(E^A \otimes\id)(y)^* (1\otimes f)xp \\
& =p(E^A \otimes\id)(y^* (1\otimes f)x)p \\
& =E_p^A ( \la (1\otimes f)yp, \, (1\otimes f)xp \ra_{pM_n (C)p} ) .
\end{align*}
Thus we obtain Condition (6) in Definition \ref{def:expectation}. Therefore, we obtain the conclusion.
\end{proof}

\begin{thm}\label{thm:morita}Let $A\subset C$ and $B\subset D$ be unital inclusions of
unital $C^*$-algebras, which are strongly Morita equivalent with respect to
a $C-D$-equivalence bimodule $Y$ and its closed subspace $X$. If there is a conditional
expectation $E^A$ of Watatani index-finite type from $C$ onto $A$, then
there are a conditional
expectation $E^B$ of Watatani index-finite type from $D$ onto $B$
and a conditional expectation $E^X$ from $Y$ onto $X$ with respect to $E^A$ and $E^B$.
Also, if there is a conditional
expectation $E^B$ of Watatani index-finite type from $D$ onto $B$, then we have the same result as
above.
\end{thm}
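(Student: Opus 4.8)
The plan is to exploit the explicit model established in the discussion preceding the theorem together with Lemma \ref{lem:expectation}, so that the statement becomes essentially a matter of assembling constructions already carried out. By Lemmas \ref{lem:bijection}, \ref{lem:condition} and Corollary \ref{cor:restriction}, I would first reduce to the case
$$
B=pM_n(A)p, \quad D=pM_n(C)p, \quad Y=(1\otimes f)M_n(C)p, \quad X=(1\otimes f)M_n(A)p,
$$
where $p$ is a full projection in $M_n(A)$.

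For the first assertion, given a conditional expectation $E^A$ of Watatani index-finite type from $C$ onto $A$, I would set $E^B:=E_p^A$, the compression from $pM_n(C)p$ onto $pM_n(A)p$ defined above, and $E^X:=F$. The quasi-basis $\{(p(u_i\otimes I_n)a_jp,\, pb_j(u_i^*\otimes I_n)p)\}_{i,j}$ exhibited before Lemma \ref{lem:expectation} shows that $E_p^A$ is of Watatani index-finite type, with $\Ind_W(E_p^A)=(\Ind_W(E^A)\otimes I_n)p$. That $E^X=F$ is then a conditional expectation from $Y$ onto $X$ with respect to $E^A$ and $E^B$ is precisely Lemma \ref{lem:expectation}. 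This settles the first half with no further computation.

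For the converse assertion I would reduce to the first half by symmetry. By Proposition \ref{prop:relation} the strong Morita equivalence is symmetric, so $B\subset D$ is strongly Morita equivalent to $A\subset C$ with respect to the dual equivalence bimodule $\widetilde{Y}$ and its closed subspace $\widetilde{X}$. Applying the first half with the roles of $(A,C)$ and $(B,D)$ interchanged, a given $E^B$ of Watatani index-finite type yields an $E^A$ of Watatani index-finite type from $C$ onto $A$ together with a conditional expectation $\widetilde{E}^X$ from $\widetilde{Y}$ onto $\widetilde{X}$ with respect to $E^B$ and $E^A$. It then remains to transport $\widetilde{E}^X$ back along the conjugation $y\mapsto\widetilde{y}$, defining $E^X$ by $\widetilde{E^X(y)}=\widetilde{E}^X(\widetilde{y})$, and to check that the six conditions of Definition \ref{def:expectation} for $E^X$ follow from those for $\widetilde{E}^X$ via the standard identifications such as ${}_D \la\widetilde{x},\widetilde{y}\ra=\la x,y\ra_D$ and $\la\widetilde{x},\widetilde{y}\ra_C={}_C \la x,y\ra$, together with the corresponding translation of the left and right module actions.

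The main obstacle I expect is this last transport step. While the first half is immediate, verifying that conjugation converts each of the six defining conditions for $\widetilde{E}^X$ (stated with the algebras and inner products swapped) into the matching condition for $E^X$ requires careful bookkeeping of which inner product and which module action is exchanged under $y\mapsto\widetilde{y}$, as well as of the conjugate-linearity of the conjugation map. Once these identities are in place, the conditions should match term by term and the proof concludes.
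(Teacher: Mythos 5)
Your proposal is correct and follows essentially the same route as the paper, whose entire proof is the one-line citation of Lemmas \ref{lem:bijection}, \ref{lem:condition}, \ref{lem:expectation} and Corollary \ref{cor:restriction}: your first half is exactly that reduction to $B=pM_n(A)p$, $D=pM_n(C)p$ with $E^B=E^A_p$ and $E^X=F$. Your explicit treatment of the converse via the dual bimodule $\widetilde{Y}\supset\widetilde{X}$ and the pairing of conditions $(1)$--$(3)$ with $(4)$--$(6)$ under conjugation simply fills in what the paper leaves implicit, and is the right way to do it (note that Section \ref{sec:one-sided} cannot be used here, since Corollary \ref{cor:finite} itself relies on this theorem).
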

\begin{proof}This is immediate by Lemmas \ref{lem:bijection}, \ref{lem:condition}, \ref{lem:expectation}
and Corollary \ref{cor:restriction}.
\end{proof}

\section{One-sided conditional expectations on full Hilbert $C^*$-modules}
\label{sec:one-sided}Let $B\subset D$ be a unital inclusion of unital $C^*$-algebras and
let $Y$ be a full right Hilbert $D$-module and $X$ its closed subspace satisfying the following:
\newline
(1) $x\cdot b\in X$, $\la x, y \ra_D\in B$ for any $b\in B$, $x, y\in X$,
\newline
(2) $\overline{\la X, X \ra_D }=B$, $\overline{\la Y, X \ra_D}=D$,
\newline
(3) There is a finite set $\{x_i \}_{i=1}^n \subset X$ such that for any $y\in Y$
$$
\sum_{i=1}^n x_i \cdot \la x_i , \, y \ra_D =y .
$$
We note that $Y$ is of finite type and that $X$ can be regarded as a full right Hilbert $B$-module of finite type
in the sense of Kajiwara and Watatani \cite {KW1:bimodule}.
Let $\BB_D (Y)$ be the $C^*$-algebra of all right $D$-linear operators on $Y$ for
which has a right adjoint $D$-linear operator on $Y$.
Let $C=\BB_D (Y)$.
For any $x, y\in Y$,
let $\theta_{x, y}^Y$ be the rank-one operator on $Y$ defined by
$$
\theta_{x, y}^Y (z)=x \cdot \la y, z \ra_D
$$
for any $z\in Y$. Then $\theta_{x, y}^Y$ is a right $D$-module operator.
Hence $\theta_{x, y}^Y \in C$ for any $x, y\in Y$. Since $D$ is unital, by \cite [Lemma 1.7]{KW1:bimodule},
$C$ is the $C^*$-algebra of all linear spans of such $\theta_{x, y}^Y$. Let $A_0$ be the linear spans of 
the set $\{\theta_{x, y}^Y \, | \, x, y\in X \}$. By the assumptions, $\sum_{i=1}^n \theta_{x_i, x_i }^Y =1_Y$.
Hence $A_0$ is a $*$-algebra. Let $A$ be the closure of $A_0$ in $\BB_D (Y)$. Then $A$ is a unital
$C^*$-subalgebra of $C$. Let $\BB_B (X)$ be the $C^*$-algebra defined in the same way as above.
Let $\pi$ be the map from $\BB_B (X)$ to $A$ defined by $\pi (\theta_{x, y}^X )=\theta_{x, y}^Y$, where
$x, y\in X$ and $\theta_{x, y}^X$ is the rank-one operator on $X$ defined as above. Then clearly $\pi$ is injective
and $\pi(\BB_B (X))=A_0$. Thus $A_0$ is closed and $A_0 =A$.

\begin{lemma}\label{lem:equivalent}With the above notations and assumptions, the inclusion
$A\subset C$ is unital and strongly Morita equivalent to the unital inclusion
$B\subset D$ with respect to $Y$ and its closed subspace $X$.
\end{lemma}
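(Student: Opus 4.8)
The plan is to check directly that the pair $(Y,X)$ satisfies Definition~\ref{def:inclusion} for the inclusions $A\subset C$ and $B\subset D$, after first recording the imprimitivity structure on $Y$. As noted above via \cite[Lemma 1.7]{KW1:bimodule}, $C=\BB_D (Y)=\overline{\mathrm{span}}\{\theta_{x,y}^Y \,|\, x,y\in Y\}$, and equipping $Y$ with the left $C$-valued inner product ${}_C \la x,y\ra:=\theta_{x,y}^Y$ makes $Y$ a $C-D$-equivalence bimodule: the compatibility identity ${}_C \la x,y\ra\cdot z=x\cdot\la y,z\ra_D$ is exactly the definition of $\theta_{x,y}^Y$, fullness on the right is the standing assumption, and fullness on the left, $\overline{{}_C \la Y,Y\ra}=C$, is precisely the description of $C$ just recorded. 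The inclusion $A\subset C$ is unital because $\sum_{i=1}^n \theta_{x_i,x_i}^Y=1_Y=1_C$ lies in $A_0=A$; consequently $\overline{AC}=C$ and $\overline{BD}=D$ hold trivially.

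It then remains to verify conditions (1) and (2) of Definition~\ref{def:inclusion}. Most items are immediate from the hypotheses of the lemma: for $x,y\in X$ we have ${}_C \la x,y\ra=\theta_{x,y}^Y\in A_0=A$ and $\la x,y\ra_D\in B$ by hypothesis (1), while $\overline{{}_C \la X,X\ra}=A$ is the definition of $A$ and $\overline{\la X,X\ra_D}=B$, $\overline{\la Y,X\ra_D}=D$ are hypothesis (2). For the module-invariance conditions, given $a\in A$ and $x\in X$ I would reduce to $a=\theta_{u,v}^Y$ with $u,v\in X$ and compute $a\cdot x=u\cdot\la v,x\ra_D$, which lies in $X$ because $\la v,x\ra_D\in B$ and $X$ is invariant under the right $B$-action by hypothesis (1); the general case follows by linearity and continuity. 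Likewise $x\cdot b\in X$ for $b\in B$ is exactly hypothesis (1).

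The one genuinely nontrivial point, and the step I expect to be the main obstacle, is the fullness condition $\overline{{}_C \la Y,X\ra}=C$. Here I would exploit the frame $\{x_i\}_{i=1}^n\subset X$ together with the rank-one multiplication rule $\theta_{a,b}^Y\,\theta_{c,d}^Y=\theta_{a\cdot\la b,c\ra_D,\,d}^Y$: for arbitrary $y,z\in Y$,
\begin{align*}
\theta_{y,z}^Y=\theta_{y,z}^Y\cdot 1_Y=\sum_{i=1}^n \theta_{y,z}^Y\,\theta_{x_i,x_i}^Y=\sum_{i=1}^n \theta_{y\cdot\la z,x_i\ra_D,\,x_i}^Y ,
\end{align*}
and each summand lies in ${}_C \la Y,X\ra$ since $x_i\in X$. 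As finite linear combinations of the $\theta_{y,z}^Y$ are dense in $C$, this yields $\overline{{}_C \la Y,X\ra}=C$. Assembling these verifications produces exactly the hypotheses of Definition~\ref{def:inclusion}, and hence the asserted strong Morita equivalence of the unital inclusion $A\subset C$ with $B\subset D$ relative to $(Y,X)$.
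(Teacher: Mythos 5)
Your proof is correct and follows essentially the same route as the paper: the decisive step in both is to use the frame $\{x_i\}\subset X$ to write $\theta_{y,z}^Y=\sum_{i}\theta_{y\cdot\la z,x_i\ra_D,\,x_i}^Y$, placing every rank-one operator in ${}_C\la Y,X\ra$ and hence giving $\overline{{}_C\la Y,X\ra}=C$. You merely spell out the routine verifications (unitality of $A\subset C$, invariance of $X$ under the $A$- and $B$-actions, the remaining fullness conditions) that the paper dismisses as clear, and these details are all accurate.
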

\begin{proof}By the above discussions, the inclusion $A\subset C$ is unital.
Clearly $A$ and $B$ are strongly Morita equivalent with respect to $X$ and $C$ and $D$ are
strongly Morita equivalent with respect to $Y$. For any $x, y, z\in Y$,
\begin{align*}
\theta_{x, y}^Y (z) & =x\cdot \la y, z \ra_D =x\cdot \la \sum_{i=1}^n x_i \cdot \la x_i , y \ra_D \, , \, z \ra_D
=\sum_{i=1}^n x\cdot \la y, x_i \ra_D \, \la x_i , z \ra_D \\
& =\sum_{i=1}^n \theta_{[x\cdot \la y, x_i \ra_D ], \, x_i }^Y (z) .
\end{align*}
Since $x_i \in X$, $[x\cdot \la y, x_i \ra_D ]\in Y$ for $i=1,2,\dots, n$,
$\theta_{x, y}^Y \in {}_C \la Y, X \ra$ for any $x, y\in Y$.
Thus ${}_C \la Y, X \ra=C$.
Therefore, $A\subset C$ is strongly Morita equivalent to $B\subset D$ with respect to
a $C-D$-equivalence bimodule $Y$ and its closed subspace $X$.
\end{proof}

Furthermore, we suppose that there is a conditional expectation $E^B$ of Watatani index-finite type from
$D$ onto $B$.

\begin{Def}\label{def:right}Let $E^X$ be a linear map from $Y$ onto $X$.
We say that $E^X$ is a
\sl
right conditional expectation
\rm
from $Y$ onto $X$ with respect to $E^B$ if $E^X$ satisfies the following conditions:
\newline
$(1)$ $E^X (x\cdot d)=x\cdot E^B (d)$ for any $d\in D$, $x\in X$,
\newline
$(2)$ $E^X (y\cdot b)=E^X (y)\cdot b$ for any $b\in B$, $y\in Y$,
\newline
$(3)$ $E^B(\la y, x\ra_D )=\la E^X (y), x \ra_D$ for any $x\in X$, $y\in Y$.
\end{Def}

\begin{remark}\label{remark:norm}(i) By Definition \ref {def:right}, we can see that
$E^B (\la y, x \ra_D )=\la E^X (y), x \ra_B$ for any $x\in X$, $y\in Y$.
\newline
(ii) $E^X$ is a projection of norm one from $Y$ onto
$X$. Indeed, by Raeburn and William \cite[the proof of Lemma 2.8]{RW:continuous},
for any $y\in Y$,
\begin{align*}
||E^X (y)|| & =\sup \{||\la E^X (y) , z \ra_B || \, | \, ||z||\le 1, \, z\in X \} \\
& =\sup \{||E^B ( \la y , z \ra_D ) || \, | \, ||z|| \le 1, z\in X \} \\
& \le \sup \{ ||y|| \, ||z|| \, \, | \, \, ||z|| \le 1, \, z\in X \} \\
& =||y|| .
\end{align*}
Since $E^X (x)=x$ for any $x\in X$, $E^X $ is a projection of norm one from $Y$ onto $X$.
\end{remark}

\begin{lemma}\label{lem:right_expectation}With the same assumptions as
in Lemma \ref{lem:equivalent}, we suppose that there is a conditional expectation
$E^B$ of Watatani index-finite type from $D$ onto $B$. Then there is a right conditional
expectation $E^X$ from $Y$ onto $X$ with respect to $E^B$.
\end{lemma}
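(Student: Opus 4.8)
The plan is to write $E^X$ down explicitly by means of the finite reconstruction family $\{x_i\}_{i=1}^n\subset X$ supplied by the hypotheses of Lemma \ref{lem:equivalent}. Concretely, I would set, for any $y\in Y$,
$$
E^X (y)=\sum_{i=1}^n x_i \cdot E^B (\la x_i , \, y \ra_D ) .
$$
Since $E^B$ maps $D$ into $B$ and $X$ is invariant under the right $B$-action, each summand $x_i \cdot E^B (\la x_i , y \ra_D )$ lies in $X$; hence $E^X$ is a well-defined, manifestly linear and bounded map from $Y$ into $X$. The first thing I would check is that $E^X$ fixes $X$ pointwise: for $x\in X$ the inner product $\la x_i , x \ra_D$ lies in $B$, so $E^B (\la x_i , x \ra_D )=\la x_i , x \ra_D$, and the reconstruction identity $\sum_i x_i \cdot \la x_i , x \ra_D =x$ gives $E^X (x)=x$. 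Together with Remark \ref{remark:norm}(ii) this already shows that $E^X$ is a projection of norm one from $Y$ onto $X$, so it only remains to verify Conditions (1)--(3) of Definition \ref{def:right}.

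Conditions (1) and (2) I expect to be straightforward consequences of the bimodule property of the conditional expectation $E^B$ together with reconstruction. For (1), with $x\in X$ and $d\in D$, I would use $\la x_i , x\cdot d \ra_D =\la x_i , x \ra_D \, d$ and the left-module identity $E^B (\la x_i , x \ra_D \, d)=\la x_i , x \ra_D \, E^B (d)$, valid because $\la x_i , x \ra_D \in B$, and then collapse the sum via reconstruction to obtain $x\cdot E^B (d)$. For (2), with $y\in Y$ and $b\in B$, the right-module identity $E^B (\la x_i , y \ra_D \, b)=E^B (\la x_i , y \ra_D )\, b$ yields $E^X (y)\cdot b$ at once.

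The most delicate step, and the one I expect to be the main obstacle, is Condition (3), namely $E^B (\la y, x \ra_D )=\la E^X (y), x \ra_D$ for $x\in X$, $y\in Y$. Here I would expand
$$
\la E^X (y), \, x \ra_D =\sum_{i=1}^n E^B (\la x_i , y \ra_D )^* \, \la x_i , x \ra_D ,
$$
then rewrite $E^B (\la x_i , y \ra_D )^* =E^B (\la y, x_i \ra_D )$ using that a conditional expectation commutes with the involution, absorb the factor $\la x_i , x \ra_D \in B$ into $E^B$ by the right-module property, and finally collapse the sum through $\sum_i \la y, x_i \ra_D \, \la x_i , x \ra_D =\la y, \sum_i x_i \cdot \la x_i , x \ra_D \ra_D =\la y, x \ra_D$, where reconstruction is applied to the element $x\in X\subset Y$. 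What makes this step the crux is that three facts must line up simultaneously: the $*$-compatibility of $E^B$, the circumstance that the reconstruction vectors $x_i$ and the target vector $x$ all lie in $X$ (so that every inner product appearing under $E^B$ genuinely lands in $B$ and can be moved past it), and the reconstruction identity itself. Once these are combined the equality follows. I note that the Watatani index-finiteness of $E^B$ plays no role beyond ensuring that $E^B$ is an honest conditional expectation, so the verification of (1)--(3) completes the proof.
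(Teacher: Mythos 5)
Your proof is correct and takes essentially the same route as the paper: the paper defines $E^X(y)$ implicitly by the identity $\la E^X(y),x\ra_B=E^B(\la y,x\ra_D)$ and then checks conditions (1) and (2), and your explicit frame formula $E^X(y)=\sum_{i=1}^n x_i\cdot E^B(\la x_i,y\ra_D)$ is exactly the element realizing that implicit definition, as your verification of condition (3) shows. If anything, your version is slightly more complete, since writing the formula out makes the existence of $E^X(y)$ in $X$ manifest, a point the paper's implicit definition leaves tacit.
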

\begin{proof}Let $E^X$ be the linear map from $Y$ to $X$ defined by
$$
\la E^X (y), x \ra_B =E^B (\la y, x \ra_D )
$$
for any $x\in X$, $y\in Y$. We show that Conditions (1), (2) in Definition \ref{def:right}
hold. Indeed, for any $x, y\in X$, $d\in D$,
$$
\la y, E^X (x\cdot d) \ra_B =E^B ( \la y, x\cdot d \ra_D )=E^B (\la y, x \ra_D d )=\la y, x \ra_B E^B (d)
= \la y, x\cdot E^B (d) \ra_B .
$$
Hence $E^X (x\cdot d)=x\cdot E^B (d)$ for any $x\in X$, $d\in D$. For any $b\in B$, $y\in Y$, $x\in X$,
\begin{align*}
\la x, E^X (y\cdot b) \ra_B & =E^B (\la x, y\cdot b \ra_D )=E^B ( \la x, y \ra_D b )=E^B (\la x, y \ra_D)b \\
& =\la x, E^X (y) \ra_B b =\la x, E^X (y)\cdot b \ra_B .
\end{align*}
Hence $E^X (y\cdot b)=E^X (y)\cdot b$ for any $y\in Y$, $b\in B$.
\end{proof}

\begin{lemma}\label{lem:right2}Let $A\subset C$ and $B\subset D$ be unital inclusions of unital
$C^*$-algebras, which are strongly Morita equivalent with respect to
a $C-D$-equivalence bimodule $Y$ and its closed subspace $X$.
Let $E^B$ be a conditional expectation of Watatani index-finite type from $D$ onto $B$
and $E^X$ a right conditional expectation from $Y$ onto $X$
with respect to $E^B$. Then for any $a\in A$, $y\in Y$, $E^X (a\cdot y)=a\cdot E^X (y)$.
\end{lemma}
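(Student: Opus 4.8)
The plan is to prove the identity by testing both sides against arbitrary elements of $X$ in the $B$-valued inner product and then invoking nondegeneracy. Note first that $E^X (a\cdot y)$ lies in $X$ because $E^X$ maps $Y$ into $X$, while $a\cdot E^X (y)$ lies in $X$ because $E^X (y)\in X$ and $a\in A$ acts on $X$ by Definition \ref{def:inclusion}(1); so both sides are genuine elements of $X$, and it suffices to show that they have the same $B$-valued inner product against every $z\in X$.

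The starting point is the characterization of $E^X$ through its right inner product. Combining Definition \ref{def:right}(3) with Remark \ref{remark:norm}(i) and taking adjoints (using that $E^B$ is $*$-preserving), one obtains $\la x, E^X (y) \ra_B =E^B (\la x, y \ra_D )$ for every $x\in X$, $y\in Y$. With this in hand the computation proceeds in four steps: for any $z\in X$,
\begin{align*}
\la z, E^X (a\cdot y) \ra_B & =E^B (\la z, a\cdot y \ra_D ) \\
& =E^B (\la a^* \cdot z, y \ra_D ) \\
& =\la a^* \cdot z, E^X (y) \ra_B \\
& =\la z, a\cdot E^X (y) \ra_B .
\end{align*}
The first and third equalities are the inner-product characterization above; the third is legitimate precisely because $a^* \cdot z\in X$ by Definition \ref{def:inclusion}(1). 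The second equality uses the compatibility of the left $A\subset C$-action with the $D$-valued inner product of the equivalence bimodule $Y$, namely $\la z, a\cdot y \ra_D =\la a^* \cdot z, y \ra_D$, and the fourth uses the analogous compatibility in the $A$-$B$-equivalence bimodule $X$, namely $\la a^* \cdot z, w \ra_B =\la z, a\cdot w \ra_B$. Since this holds for all $z\in X$ and the $B$-valued inner product on the full Hilbert $B$-module $X$ is nondegenerate, I conclude that $E^X (a\cdot y)=a\cdot E^X (y)$.

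The only subtle point, and the reason the statement requires a proof rather than being immediate from the definition, is that a right conditional expectation is defined purely through the right module structure, so its left $A$-linearity is not built in. The mechanism that forces it is exactly the interplay between the left action and the $D$-valued inner product, together with the inclusion condition $a^* \cdot X\subset X$, which allows one to move the $A$-action from the $E^X (a\cdot y)$ side onto an element $a^* \cdot z$ of $X$ against which $E^X$ behaves by its defining property. I expect no genuine obstacle beyond careful bookkeeping with the inner-product conventions; the one thing to get right is the placement of the adjoint $a^*$ in the two compatibility relations.
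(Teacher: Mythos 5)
Your proof is correct, but it is organized differently from the paper's. The paper uses fullness of $X$ as a left Hilbert $A$-module to reduce to the case $a={}_A \la x, z \ra$ with $x, z\in X$, and then computes directly via the imprimitivity relation ${}_A \la x, z \ra \cdot y = x\cdot \la z, y \ra_D$ together with Condition (1) of Definition \ref{def:right} ($E^X (x\cdot d)=x\cdot E^B (d)$) and the adjoint form of Condition (3); strictly speaking that reduction also needs a density-plus-continuity step (continuity of $E^X$ comes from Remark \ref{remark:norm}(ii)), which the paper leaves implicit. You instead dualize: you test both sides against arbitrary $z\in X$ in the $B$-valued inner product, use only the adjointed Condition (3) twice together with the adjointability of the left $A$-action ($\la z, a\cdot y \ra_D =\la a^* \cdot z, y \ra_D$, with $a^* \cdot z\in X$ by Definition \ref{def:inclusion}(1)), and conclude by nondegeneracy, which is legitimate since both sides lie in $X$ so one may take $z$ equal to their difference. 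Your route handles a general $a\in A$ in one stroke, avoids the density argument and Condition (1) of Definition \ref{def:right} entirely, and is if anything slightly cleaner; the paper's route makes the underlying algebraic identity (associativity of the two inner products) more visible. Both rest on the same mechanism, namely moving the $A$-action across the $D$-valued inner product and into $X$, which you correctly identify as the crux.
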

\begin{proof}Since $X$ is full with the left $A$-valued inner product,
it suffices to show that
$$
E^X ({}_A \la x, z \ra \cdot y)={}_A \la x, z \ra \cdot E^X(y)
$$
for any $x, z\in X$, $y\in Y$. Indeed,
\begin{align*}
E^X ({}_A \la x, z \ra \cdot y) & =E^X (x \cdot \la z, y \ra_D )=x\cdot E^B ( \la z, y \ra_D )
=x \cdot \la z, E^X (y) \ra_B \\
& ={}_A \la x, z \ra \cdot E^X (y) .
\end{align*}
\end{proof}

\begin{prop}\label{prop:expectation2}With the same assumptions as in Lemma \ref{lem:right2},
there is a conditional expectation $E^A$ from $C$ onto $A$ such that
$E^X$ is a conditional expectation from $Y$ onto $X$ with respect to $E^A$ and $E^B$. 
\end{prop}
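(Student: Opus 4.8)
The plan is to notice that the six conditions of Definition \ref{def:expectation} split into two groups. Conditions $(2),(4),(5),(6)$ are precisely the conclusion of Lemma \ref{lem:right2} together with conditions $(1),(2),(3)$ of Definition \ref{def:right}, so they already hold for the given $E^X$; only conditions $(1)$ and $(3)$, which involve the as-yet-unknown map $E^A$, remain. Thus it suffices to construct a conditional expectation $E^A$ from $C$ onto $A$ satisfying
$$
E^X(c\cdot x)=E^A(c)\cdot x \quad (c\in C,\ x\in X), \qquad E^A({}_C\la y,x\ra)={}_C\la E^X(y),x\ra \quad (y\in Y,\ x\in X).
$$
The central device will be a reconstruction formula for $Y$ whose frame vectors lie inside the subspace $X$.

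First I would produce such a frame. Since $X$ is an $A-B$-equivalence bimodule with $A$ unital, applying Kajiwara--Watatani \cite[Lemma 1.7 and Corollary 1.28]{KW1:bimodule} to the left $A$-module structure of $X$ yields $e_1,\dots,e_m\in X$ with $\sum_{k=1}^m {}_A\la e_k,e_k\ra=1_A$. Because $A\subset C$ is a unital inclusion we have $1_A=1_C$, and since ${}_C\la x,y\ra={}_A\la x,y\ra$ for $x,y\in X$ by Definition \ref{def:inclusion}, it follows that $\sum_{k=1}^m {}_C\la e_k,e_k\ra=1_C$. Letting $C$ act on $Y$, this gives the reconstruction identity
$$
\sum_{k=1}^m e_k\cdot\la e_k,y\ra_D=y \qquad (y\in Y).
$$
I then define $E^A\colon C\to A$ by $E^A(c)=\sum_{k=1}^m {}_A\la E^X(c\cdot e_k),\, e_k\ra$, which is linear and lands in $A$ since $E^X(c\cdot e_k),e_k\in X$.

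It remains to verify the required properties. Using $E^X|_X=\id$ and the frame, $E^A(a)=\sum_k {}_A\la a\cdot e_k,e_k\ra=a\cdot\sum_k{}_A\la e_k,e_k\ra=a$ for $a\in A$, so $E^A$ restricts to the identity on $A$ and is an idempotent onto $A$. For condition $(1)$ of Definition \ref{def:expectation} I compute $E^A(c)\cdot x=\sum_k E^X(c\cdot e_k)\cdot\la e_k,x\ra_B=\sum_k E^X(c\cdot e_k\cdot\la e_k,x\ra_B)=E^X(c\cdot x)$, the middle equality using Definition \ref{def:right}$(2)$ and the last using the reconstruction identity; condition $(3)$ follows in the same manner upon inserting $c={}_C\la y,x\ra$, applying Definition \ref{def:right}$(2)$, and invoking reconstruction for $x\in X$. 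For the norm, since $A$ acts faithfully and nondegenerately on $X$ we have $\|E^A(c)\|=\sup\{\|E^A(c)\cdot\eta\|:\eta\in X,\ \|\eta\|\le1\}$, and by condition $(1)$ together with the fact that $E^X$ is a projection of norm one (Remark \ref{remark:norm}) this equals $\sup\{\|E^X(c\cdot\eta)\|:\eta\in X,\ \|\eta\|\le1\}\le\sup\{\|c\cdot\eta\|:\|\eta\|\le1\}\le\|c\|$. Hence $E^A$ is a projection of norm one onto $A$, so by Tomiyama's theorem it is a conditional expectation from $C$ onto $A$, and combined with the reduction in the first paragraph this shows $E^X$ is a conditional expectation from $Y$ onto $X$ with respect to $E^A$ and $E^B$.

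I expect the main obstacle to be the reconstruction identity on $Y$: the definition of $E^A$ and the verifications of conditions $(1)$ and $(3)$ all rest on promoting a frame that lives in the subspace $X$ to a reconstruction for the ambient bimodule $Y$, which is exactly where unitality of the inclusion ($1_A=1_C$) and the agreement of the $C$- and $A$-valued inner products on $X$ are needed. Once this identity is available, contractivity of $E^A$ reduces cleanly to the norm-one property of $E^X$ via condition $(1)$, and Tomiyama's theorem then furnishes positivity and bimodularity automatically.
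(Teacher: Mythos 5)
Your proof is correct and follows the same overall strategy as the paper's: define $E^A$ so that Condition (1) of Definition \ref{def:expectation} holds, verify Condition (3) via the imprimitivity relations together with Definition \ref{def:right}(2), observe that the remaining conditions are exactly Lemma \ref{lem:right2} and Conditions (1)--(3) of Definition \ref{def:right}, and then conclude that $E^A$ is a conditional expectation from the norm-one estimate of Remark \ref{remark:norm}(ii) together with Tomiyama's theorem. The one genuine difference is at the starting point: the paper \emph{defines} $E^A(c)$ implicitly by the relation $E^A(c)\cdot x=E^X(c\cdot x)$ for all $x\in X$, which tacitly presupposes that the right $B$-linear map $x\mapsto E^X(c\cdot x)$ is implemented by left multiplication by an element of $A$, whereas your explicit frame formula $E^A(c)=\sum_{k}{}_A\la E^X(c\cdot e_k),\,e_k\ra$ produces that element directly and then checks that it implements the map; this settles a well-definedness point the paper leaves unaddressed, at the cost of first establishing the reconstruction identity $\sum_k e_k\cdot\la e_k,y\ra_D=y$ on all of $Y$. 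That identity (resting on $1_A=1_C$ and the agreement of the $A$- and $C$-valued inner products on $X$) is exactly the content of the paper's later Lemma \ref{lem:easy1}, so your argument uses nothing beyond what the paper itself invokes elsewhere, and all of your individual computations check out.
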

\begin{proof}Let $E^A$ be the linear map from $C$ onto $A$ defined by
$$
E^A (c) \cdot x=E^X (c\cdot x)
$$
for any $c\in C$, $x\in X$. First, we note that Conditions in Definition \ref{def:expectation} except Condition (3)
hold by the assumptions and Lemma \ref{lem:right2}. We show that Condition (3) in Definition \ref{def:expectation}
holds. Indeed fot any $x, z\in X$, $y\in Y$,
$$
E^A ({}_C \la y, x \ra )\cdot z =E^X ({}_C \la y, x \ra \cdot z) =E^X (y \cdot \la x, z \ra_B )
=E^X (y) \cdot \la x, z \ra_B ={}_C \la E^X (y), x \ra \cdot z .
$$
Hence for any $x\in X$, $y\in Y$, $E^A ({}_C \la y, \, x \ra )={}_C \la E^X (y), \, x \ra $.
Next, we show that $E^A$ is a conditional expectation from $C$ onto $A$.
For any $a\in A$, $x\in X$,
$$
E^A (a)\cdot x=E^X (a\cdot x)=a\cdot E^X(x)=a\cdot x
$$
by Lemma \ref{lem:right2}. Hence $E^A (a)=a$ for any $a\in A$.
For any $c\in C$, $x\in X$,
$$
||E^A (c)\cdot x||=||E^X (c\cdot x)||\le ||c\cdot x||\le ||c||\, ||x||
$$
by Remark \ref{remark:norm} (ii). Hence $||E^A ||=1$ since $E^A (a)=a$ for any $a\in A$.
Thus $E^A$ is a projection of norm one from $C$ onto $A$.
It follows by Tomiyama \cite [Theorem 1]{Tomiyama:projection} that
$E^A$ is a conditional expectation from $C$ onto $A$.
Therefore, we obtain the conclusion.
\end{proof}

Let $B\subset D$ be a unital inclusion of unital $C^*$-algebras and let $Y$ be
a full right Hilbert $D$-module and $X$ its closed subspace satisfying Conditions
(1)-(3) in the beginning of this section. We suppose that there is a conditional
expectation $E^B$ of Watatani index-finite type from $D$ onto $B$.
Let $C=\BB_D (Y)$ and let $A$ be the $C^*$-subalgebra, the linear spans of the set
$\{\theta_{x, y}^Y \, | \, x, y\in X \}$. Then by Lemmas \ref{lem:equivalent},
\ref{lem:right_expectation}, \ref{lem:right2} and Proposition \ref{prop:expectation2},
there are a conditional expectation $E^X$ from $Y$ onto $X$ and a conditional
expectation $E^A$ from $C$ onto $A$ such that $E^X$ is a conditional expectation from $Y$ onto $X$
with respect to $E^A$ and $E^B$. We note that a conditional expectation $E^A$ is depend only on
$E^B$ and $E^X$ by Condition (3) in Definition \ref{def:expectation}.
Hence by Theorem \ref{thm:morita}, $E^A$ is of Watatani index-finite type.
Thus we obtain the following corollary:

\begin{cor}\label{cor:finite}With the same notations as in Proposition \ref{prop:expectation2},
a conditional expectation $E^A$ from $C$ onto $A$ defined in Proposition \ref{prop:expectation2}
is of Watatani index-finite type.
\end{cor}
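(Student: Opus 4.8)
The plan is to identify the concretely constructed $E^A$ of Proposition \ref{prop:expectation2} with an abstractly produced finite-index conditional expectation, and to conclude by a uniqueness argument. First I would apply Lemma \ref{lem:equivalent} to record that, in the present setting, $A\subset C$ and $B\subset D$ are strongly Morita equivalent with respect to $Y$ and its closed subspace $X$. Since $E^B$ is of Watatani index-finite type by hypothesis, Theorem \ref{thm:morita} yields a conditional expectation $F^A$ of Watatani index-finite type from $C$ onto $A$ together with a conditional expectation $F^X$ from $Y$ onto $X$ with respect to $F^A$ and $E^B$. It then suffices to show $E^A=F^A$, since finiteness of the Watatani index will transfer automatically once the two maps are shown to coincide.

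The next step is to show that the right conditional expectation from $Y$ onto $X$ with respect to $E^B$ is unique, so that $F^X=E^X$. The map $E^X$ of Lemma \ref{lem:right_expectation} is determined by the relation $\la E^X (y), x \ra_B =E^B (\la y, x \ra_D )$ for all $x\in X$, $y\in Y$; indeed, if $w\in X$ satisfies $\la w, x \ra_B =0$ for every $x\in X$, then taking $x=w$ forces $w=0$, so $E^X (y)$ is the unique element of $X$ with this property. On the other hand $F^X$ satisfies Condition $(6)$ of Definition \ref{def:expectation}, which together with the remark following that definition gives $\la F^X (y), x \ra_B =E^B (\la y, x \ra_D )$ for all $x\in X$, $y\in Y$. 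Comparing the two relations and using the uniqueness just noted, I obtain $F^X=E^X$.

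Finally I would pin down $E^A$ from $E^X$ using Condition $(3)$ of Definition \ref{def:expectation}. Both expectations satisfy $E^A ({}_C \la y, x \ra )={}_C \la E^X (y), x \ra$ for all $x\in X$, $y\in Y$: for $F^A$ this is Condition $(3)$ with $F^X=E^X$, while for $E^A$ it was checked in the proof of Proposition \ref{prop:expectation2}. Since $\overline{{}_C \la Y, X \ra}=C$ by Definition \ref{def:inclusion}, the elements ${}_C \la y, x \ra$ span a dense subspace of $C$, so the two bounded maps $E^A$ and $F^A$ agree on a dense set and hence coincide. As $F^A$ is of Watatani index-finite type, so is $E^A$, which is the assertion.

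The main obstacle is the pair of uniqueness steps, and more precisely the recognition that the fullness condition $\overline{{}_C \la Y, X \ra}=C$ built into Definition \ref{def:inclusion} is exactly what forces the abstract expectation supplied by Theorem \ref{thm:morita} to coincide with the concrete $E^A$; without this identification the theorem only guarantees that some finite-index expectation exists on $C$, not that the specific $E^A$ of Proposition \ref{prop:expectation2} has finite Watatani index.
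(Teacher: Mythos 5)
Your argument is correct and is essentially the paper's own: the paper likewise notes that $E^A$ is determined by $E^B$ and $E^X$ through Condition $(3)$ of Definition \ref{def:expectation} and then invokes Theorem \ref{thm:morita} to transfer finiteness of the Watatani index. You merely make explicit the two uniqueness steps (that Condition $(3)$ of Definition \ref{def:right} pins down $E^X$, and that density of ${}_C\la Y, X\ra$ in $C$ pins down $E^A$) which the paper leaves implicit.
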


Combining the above results, we obtain the following:

\begin{thm}\label{thm:combine}Let $B\subset D$ be a unital inclusion of
unital $C^*$-algebras and let $Y$ be a full right Hilbert $D$-module and
$X$ its closed subspace satisfying Conditions (1)-(3) in the beginning of this
section. Let $E^B$ be a conditional expectation of Watatani index-finite type from $D$ onto $B$.
Let $C=\BB_D (Y)$ and let $A$ be the $C^*$-subalgebra, the linear spans of the set
$\{\theta_{x, y}^Y \, | \, x, y\in X \}$.
Then there are a conditional expectation $E^A$ of Watatani index-finite type from $C$ onto $A$ and
a conditional expectation $E^X$ from $Y$ onto $X$ with respect to $E^A$ and $E^B$.
\end{thm}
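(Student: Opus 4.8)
The plan is to assemble the conclusion directly from the results already proved in this section, since every component of the statement has been established piecemeal in a preceding lemma, proposition, or corollary; this is an aggregation theorem rather than one carrying new content. First I would invoke Lemma \ref{lem:equivalent} to conclude that, under Conditions (1)--(3), the inclusion $A\subset C$ is a unital inclusion of unital $C^*$-algebras which is strongly Morita equivalent to $B\subset D$ with respect to the $C-D$-equivalence bimodule $Y$ and its closed subspace $X$. This places us squarely in the hypothesis pattern under which the subsequent results were stated.

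Next, using the hypothesis that $E^B$ is of Watatani index-finite type from $D$ onto $B$, I would apply Lemma \ref{lem:right_expectation} to obtain a right conditional expectation $E^X$ from $Y$ onto $X$ with respect to $E^B$, determined by $\la E^X (y), x \ra_B = E^B (\la y, x \ra_D )$ for $x\in X$, $y\in Y$. By Lemma \ref{lem:right2}, this same $E^X$ automatically satisfies the left $A$-linearity identity $E^X (a\cdot y)=a\cdot E^X (y)$, so all the one-sided data is in place. I would then feed $E^X$ into Proposition \ref{prop:expectation2} to manufacture a conditional expectation $E^A$ from $C$ onto $A$ for which $E^X$ becomes a genuine two-sided conditional expectation from $Y$ onto $X$ with respect to $E^A$ and $E^B$ in the sense of Definition \ref{def:expectation}. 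Finally, Corollary \ref{cor:finite} supplies the last missing adjective by guaranteeing that this $E^A$ is itself of Watatani index-finite type, which is exactly the remaining assertion.

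The main point requiring care is not any computation but the correct propagation of hypotheses along the chain: one must verify that the running assumptions of Lemmas \ref{lem:right_expectation} and \ref{lem:right2} (namely Conditions (1)--(3) together with the existence of an index-finite $E^B$) coincide with those granted here, so that each cited result is legitimately applicable. The only genuinely non-formal ingredient buried in the argument is Corollary \ref{cor:finite}, whose finiteness conclusion ultimately rests on Theorem \ref{thm:morita}; but since that theorem is already available, I expect no obstacle, and the proof reduces to citing the four results in the order above.
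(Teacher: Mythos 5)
Your proposal is correct and follows exactly the paper's own route: the paper proves Theorem \ref{thm:combine} by "combining the above results," namely Lemma \ref{lem:equivalent}, Lemma \ref{lem:right_expectation}, Lemma \ref{lem:right2}, Proposition \ref{prop:expectation2}, and Corollary \ref{cor:finite}, cited in the same order and for the same purposes as in your argument. Your closing remark that the index-finiteness of $E^A$ ultimately rests on Theorem \ref{thm:morita} also matches the paper's justification of Corollary \ref{cor:finite}.
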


\begin{remark}\label{remark:left}(i) In the same way as in Definition \ref{def:right},
we can define a left conditional expectation in the following situation: Let $A\subset C$ be a unital
inclusion of unital $C^*$-algebras and let $Y$ be a full left Hilbert $C$-module and $X$ its
closed subspace satisfying that
\newline
(1) $a\cdot x\in X$, ${}_C \la x, y \ra \in A$ for any $a\in A$, $x, y\in X$,
\newline
(2) $\overline{{}_C \la X, X \ra}=A$, $\overline{{}_C \la Y, X \ra}=C$,
\newline
(3) There is a finite set $\{x_i \}_{i=1}^n \subset Y$ such that for any $y\in Y$
$$
\sum_{i=1}^n {}_C \la y, x_i \ra \cdot x_i =y .
$$
We note that $Y$ is of finite type and that $X$ can be regarded as a full left Hilbert $A$-module
of finite type in the sense of Kajiwara and Watatani \cite{KW1:bimodule}.
\newline
(ii) A conditional expectation from an equivalence onto its closed subspace in
Definition \ref{def:expectation} is a left and right conditional expectation.
\newline
(iii) We have the results on a left conditional expectation similar to the above.
\end{remark}

\section{Examples}\label{sec:example}In this section, we shall give two
examples of conditional expectations from equivalence bimodules onto
their closed subspaces.
\par
First, let $A$ and $B$ be unital $C^*$-algebras which are strongly
Morita equivalent with respect to an $A-B$-equivalence bimodule $X$.
Let $H$ be a finite dimensional $C^*$-Hopf algebra with its dual $C^*$-Hopf
algebra $H^0$. Let $\rho$ and $\sigma$ be coactions of $H^0$ on $A$ and $B$,
respectively. We suppose that $\rho$ and $\sigma$ are strongly Morita
equivalent with respect to a coaction $\lambda$ of $H^0$ on $X$, respectively,
that is, $(A, B, X, \rho, \sigma, \lambda, H^0 )$ is a covariant system (See \cite {KT3:equivalence}).
We use the same notations as in \cite {KT3:equivalence}.
Let
$$
C=A\rtimes_{\rho}H, \quad D=B\rtimes_{\sigma}H
$$
be crossed products of $C^*$-algebras $A$ and $B$ by the actions of the finite dimensional
$C^*$-Hopf algebra $H$ induced by $\rho$ and $\sigma$, respectively.
Also, let $Y=X\rtimes_{\lambda}H$ be the
crossed product of an $A-B$-equivalence bimodule $X$ by the action of $H$ induced by $\lambda$.
Then by \cite [Corollary 4.7]{KT3:equivalence}, $Y$ is a $C-D$-equivalence bimodule and
$C$ and $D$ are strongly Morita equivalent with respect to $Y$. We can see that the unital inclusion
$A\subset C$ and $B\subset D$ are strongly Morita
equivalent with respect to $Y$ and its closed subspace $X$ by easy computations.
Indeed, it suffices to show that ${}_C \la X, Y \ra =C$ and $\la X, Y \ra_D =D$ since the other conditions
in Definition \ref{def:inclusion} clearly hold. For any $x, y\in X$, $h\in H$,
\begin{align*}
{}_C \la x\rtimes_{\lambda}1 \, , (1\rtimes_{\rho}h)^*(y\rtimes_{\lambda}1) \ra & =
((1\rtimes_{\rho}h)^* {}_C \la y\rtimes_{\lambda}1 , x\rtimes_{\rho}1 \ra)^* \\
& ={}_C \la x\rtimes_{\lambda}1 , \, y\rtimes_{\lambda} 1\ra (1\rtimes_{\rho}h)
={}_A \la x , y \ra\rtimes_{\rho}h .
\end{align*}
Hence ${}_C \la X, \, Y \ra =C$. Also,
$$
\la x\rtimes_{\lambda}1 \, , y\rtimes_{\lambda}h \ra_{D}
=\la x, y \ra_B\rtimes_{\sigma}h .
$$
Thus $\la X, Y \ra_D =D$.

Let $E_1^{\rho}$ and $E_1^{\sigma}$ be the canonical conditional expectations from
$A\rtimes_{\rho}H$ and $B\rtimes_{\sigma}H$ onto $A$ and $B$ defined by
$$
E_1^{\rho}(a\rtimes_{\rho}h)=\tau(h)a, \quad E_1^{\sigma}(b\rtimes_{\sigma}h)=\tau(h)b
$$
for any $a\in A$, $b\in B$, $h\in H$, respectively, where $\tau$ is the Haar trace
on $H$. Let $E_1^{\lambda}$ be the linear map from $X\rtimes_{\lambda}H$ onto $X$
defined by
$$
E_1^{\lambda}(x\rtimes_{\lambda}h)=\tau(h)x
$$
for any $x\in X$, $h\in H$.

\begin{prop}\label{prop:morita2}With the above notations, $E_1^{\lambda}$ is a conditional
expectation from $X\rtimes_{\lambda}H$ onto $X$ with respect to $E^A$ and $E^B$.
\end{prop}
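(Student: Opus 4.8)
The plan is to verify directly that $E_1^{\lambda}$ meets each of the six requirements of Definition \ref{def:expectation}, with $E^A$ and $E^B$ taken to be the canonical expectations $E_1^{\rho}$ and $E_1^{\sigma}$. Since every map involved is linear and the three crossed products $X \rtimes_{\lambda} H$, $A \rtimes_{\rho} H$, $B \rtimes_{\sigma} H$ are linearly spanned by elementary elements $x \rtimes_{\lambda} h$, $a \rtimes_{\rho} h$, $b \rtimes_{\sigma} h$, it is enough to check each identity on such generators. For this I would first collect from \cite{KT3:equivalence} the explicit formulas for the left $C$-action, right $D$-action, left $A$-action and right $B$-action on $Y = X \rtimes_{\lambda} H$, together with the two inner products; each of these is expressed through the comultiplication of $H$ and the coactions $\rho, \sigma, \lambda$, while the inner products additionally carry the antipode.

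Conditions (1), (2), (4) and (5) compare $E_1^{\lambda}$ with the four module actions, and after substituting the generator formulas they all collapse to one algebraic fact: the Haar trace is left and right invariant under the coproduct, $\tau(h_{(1)}) h_{(2)} = \tau(h)\,1 = h_{(1)} \tau(h_{(2)})$. This is exactly what allows the scalar $\tau(h)$ produced by $E_1^{\lambda}$, $E_1^{\rho}$ or $E_1^{\sigma}$ to be pulled through the coproduct appearing in the action. Moreover these four conditions split into two mirror pairs, the left pair $(1),(2)$ and the right pair $(4),(5)$, so once one pair is settled the other follows by the same computation with the left and right structures interchanged.

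The inner-product conditions (3) and (6) are where the genuine Hopf-algebraic content sits. For (6) I would use the right inner-product formula already displayed just above, $\la x \rtimes_{\lambda} 1, \, y \rtimes_{\lambda} h \ra_D = \la x, y \ra_B \rtimes_{\sigma} h$ (with $x, y \in X$): taking adjoints expresses $\la y \rtimes_{\lambda} h, \, x \ra_D$ as $(\la x, y \ra_B \rtimes_{\sigma} h)^*$, and since $E_1^{\sigma}$, being a conditional expectation, is $*$-preserving, applying $E_1^{\sigma}$ and comparing with $\la E_1^{\lambda}(y \rtimes_{\lambda} h), \, x \ra_D$ yields the identity, the scalar factors coming from $\tau$ matching on the two sides. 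Condition (3) is handled in the same way using the corresponding left $C$-valued inner-product formula, together with the identifications ${}_C \la \cdot, \cdot \ra |_X = {}_A \la \cdot, \cdot \ra$ and $\la \cdot, \cdot \ra_D |_X = \la \cdot, \cdot \ra_B$ noted after Definition \ref{def:expectation}, which land the outputs in $A$ and $B$.

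I expect the main obstacle to be not a deep difficulty but careful bookkeeping: one must transcribe the crossed-product actions and inner products correctly, tracking the comultiplication and, in the inner products, the antipode. When antipode-laden terms arise in the general inner-product formulas one collapses them with the counit axiom $S(h_{(1)}) h_{(2)} = \varepsilon(h)\,1$ and the antipode invariance $\tau \circ S = \tau$ of the Haar trace, after which both sides of (3) and (6) reduce to a single scalar multiple of the base inner product and the desired equalities follow.
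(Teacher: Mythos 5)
Your proposal is correct and follows essentially the same route as the paper: a direct verification of Conditions (1)--(6) of Definition \ref{def:expectation} on elementary elements $a\rtimes_{\rho}h$, $x\rtimes_{\lambda}h$, $b\rtimes_{\sigma}h$, with (1),(2),(4),(5) reducing to the invariance $h_{(1)}\tau(h_{(2)})=\tau(h)1=\tau(h_{(1)})h_{(2)}$ of the Haar trace and (3),(6) handled through the inner-product formulas (the paper does (6) by expanding the $D$-valued inner product directly rather than via adjoints and the $*$-preservation of $E_1^{\sigma}$, but your variant is equally valid). The only small imprecision is that in (3) the antipode term collapses via $S(h_{(1)})\tau(h_{(2)})=S(\tau(h)1)=\tau(h)1$, i.e.\ Haar invariance again rather than the counit axiom $S(h_{(1)})h_{(2)}=\varepsilon(h)1$, but this does not affect the argument.
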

\begin{proof}
Let $X, Y$ and $E_1^{\lambda}$ be as above.
We claim that $E_1^{\rho}$, $E_1^{\sigma}$ and $E_1^{\lambda}$ satisfy
Conditions (1)-(6) in Definition \ref{def:expectation}. Indeed, we compute the following:
\newline
(1) For any $a\in A$, $x\in X$, $h\in H$,
\begin{align*}
E_1^{\lambda}((a\rtimes_{\rho}h)\cdot (x\rtimes_{\lambda}1)) & =
E_1^{\lambda}(a\cdot [h_{(1)}\cdot_{\lambda}x]\rtimes_{\lambda}h_{(2)}) \\
& =a\cdot x\tau(h)\rtimes_{\lambda}1 =E_1^{\rho}(a\rtimes_{\rho}h)\cdot (x\rtimes_{\lambda}1) .
\end{align*}
(2) For any $a\in A$, $x\in X$, $h\in H$,
$$
E_1^{\lambda}((a\rtimes_{\rho}1)\cdot (x\rtimes_{\lambda}h)) 
=E_1^{\lambda}(a\cdot x\rtimes_{\lambda}h)=\tau(h)a\cdot x\rtimes_{\lambda}1
=(a\rtimes_{\rho}1)\cdot E_1^{\lambda}(x\rtimes_{\lambda}h) .
$$
(3) For any $x, y\in X$, $h\in H$,
\begin{align*}
E_1^{\rho}({}_C\la y\rtimes_{\lambda}h , \, x\rtimes_{\lambda}1 \ra) & =
E_1^{\rho}({}_A \la y, \, [S(h_{(1)})^* \cdot_{\lambda} x] \ra \rtimes_{\rho}h_{(2)}) \\
& ={}_A \la y, \, [S(h_{(1)})^* \cdot_{\lambda} x]\ra\tau(h_{(2)}) \\
& ={}_A \la y, \, \overline{\tau(h)}x \ra
={}_A \la E_1^{\lambda}(y\rtimes_{\lambda}h), \, x \ra .
\end{align*}
(4) For any $b\in B$, $x\in X$, $h\in H$,
$$
E_1^{\lambda}((x\rtimes_{\lambda}1)\cdot (b\rtimes_{\sigma}h)) 
=E_1^{\lambda}(x\cdot b\rtimes_{\lambda}h)=\tau(h)(x\cdot b\rtimes_{\lambda}1)
=(x\rtimes_{\lambda}1)\cdot E_1^{\sigma}(b\rtimes_{\sigma}h) .
$$
(5) For any $b\in B$, $x\in X$, $h\in H$,
\begin{align*}
E_1^{\lambda}((x\rtimes_{\lambda}h)\cdot (b\rtimes_{\sigma}1)) & =
E_1^{\lambda}(x\cdot [h_{(1)}\cdot_{\sigma}b]\rtimes_{\lambda}h_{(2)})
=x\cdot b\tau(h)\rtimes_{\lambda}1 \\
& =E_1^{\lambda}(x\rtimes_{\lambda}h)\cdot (b\rtimes_{\sigma}1) .
\end{align*}
(6) For any $x, y\in X$, $h\in H$,
\begin{align*}
E_1^{\sigma}(\la y\rtimes_{\lambda}h , \, x\rtimes_{\lambda}1 \ra_D ) & =
E_1^{\sigma}([h_{(1)}^* \cdot_{\sigma} \la y, \, x \ra_B ]\rtimes_{\sigma}h_{(2)}^* ) \\
& =\tau (h^* ) \la y, \, x \ra_B =\la E_1^{\lambda} (y\rtimes_{\lambda}h ), \, x\rtimes_{\lambda}1 \ra_B .
\end{align*}
Therefore, we obtain the conclusion.
\end{proof}

We shall give another example. Let $A\subset B$ be a unital inclusion of unital $C^*$-algebras
and let $F$ be a conditional expectation of Watatani index-finite type from $B$ onto $A$.
Let $f$ be the Jones projection and $B_1$ the $C^*$-basic construction for $F$. Let $F_1$ be
its dual conditional expectation from $B_1$ onto $B$. Let $f_1$ be the Jones projection and $B_2$
the $C^*$-basic construction for $F_1$. Let $F_2$ be the dual conditional expectation of $F_1$
from $B_2$ onto $B_1$. Then $A$ is strongly Morita equivalent to $B_1$ and $B$ is
strongly Morita equivalent to $B_2$ by Watatani \cite {Watatani:index}.
Since $F$ and $F_1$ are of Watatani index-finite type, $B$ and $B_1$ can be equivalence bimodules,
that is, $B$ can be regarded as a $B_1 -A$-equivalence bimodule as follows:
For any $a\in A$, $x, y, z\in B$,
$$
{}_{B_1} \la x, \, y \ra =xfy^* , \quad \la x, \, y \ra_A =F(x^* y), \qquad
xfy\cdot z =xF(yz), \quad  x\cdot a=xa .
$$
Also, $B_1$ can be regarded as a $B_2 -B$-equivalence bimodule as follows:
For any $b\in B$, $x, y, z\in B_1$,
$$
{}_{B_2} \la x, \, y \ra =xf_1 y^* , \quad \la x, \, y \ra_B =F_1 (x^* y) , \qquad
xf_1 y\cdot z=xF_1 (yz), \quad x\cdot b=xb .
$$
We denote by $\Ind_W (F)$ the Watatani index of a conditional expectation $F$ from
$B$ onto $A$. Also, let $\{(w_i , \, w_i^* )\}_{i=1}^n$ be a quasi-basis for $F_1$.

\begin{lemma}\label{lem:equivalent2}With the above notations, we suppose that $\Ind_W (F)\in A$.
Then the inclusions $A\subset B$ and $B_1 \subset B_2$ are strongly Morita equivalent.
\end{lemma}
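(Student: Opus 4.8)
The plan is to verify Definition~\ref{def:inclusion} directly, using the $B_2-B$-equivalence bimodule $B_1$ (with the structure recalled above) as the ambient bimodule $Y$ and producing an explicit closed subspace $X\subseteq B_1$ for it. Write $\lambda=\Ind_W(F)$; since $\lambda$ lies in the centre of $B$ and is positive and invertible, the hypothesis $\lambda\in A$ forces $\lambda^{-1}\in A$ as well. I would take
$$
X=\overline{Bf},
$$
which coincides with $\overline{B_1 f}$ (since $fbf=F(b)f$ gives $(BfB)f\subseteq Bf$, hence $\overline{B_1 f}=\overline{Bf}$), and I would identify $X$ with the $B_1-A$-equivalence bimodule $B$ through $b\mapsto bf$. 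Under this identification the module operations on $B$ become honest multiplications inside $B_1$: from $(xfy)(zf)=xF(yz)f$ and $(zf)a=(za)f$ one reads off that the left $B_1$-action and the right $A$-action on $X$ are the restrictions of the left $B_2$-action and the right $B$-action on $Y$, and that $B_1\cdot X\subseteq X$, $X\cdot A\subseteq X$.

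The decisive computations are those of the two inner products on $X$. For $b,c\in B$ we have $(bf)^*(cf)=fb^*cf=F(b^*c)f$, so that
$$
\la bf, cf \ra_B=F_1(F(b^*c)f)=\lambda^{-1}F(b^*c)\in A,
$$
using $F_1(af)=\lambda^{-1}a$ for $a\in A$. This is precisely the step that needs $\Ind_W(F)\in A$: the factor $\lambda^{-1}$ must land in $A$, and without the hypothesis the right inner product would leave $A$. For the left inner product I would invoke the tower relation $ff_1 f=\lambda^{-1}f$ — a standard Watatani identity, valid here because $\lambda$ is central, which one also checks directly from $fxf=F(x)f$ and $F_1(\,\cdot\,f)=\lambda^{-1}(\,\cdot\,)$ — and obtain
$$
{}_{B_2}\la bf, cf \ra=bf\,f_1 f\,c^*=\lambda^{-1}bfc^*\in B_1.
$$

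Fullness then follows quickly. Setting $b=1$ shows $\overline{\la X,X\ra_B}=\overline{\lambda^{-1}F(B)}=A$, while ${}_{B_2}\la X,X\ra=\lambda^{-1}\overline{BfB}$ is dense in $B_1$. For the mixed relations, $\la \xi, bf\ra_B=F_1(\xi^* bf)$ already gives all of $B$ once $\xi$ ranges over $B$ (there $F_1(\xi^* bf)=\lambda^{-1}\xi^* b$), so $\overline{\la Y,X\ra_B}=B$; and since $f_1$ commutes with $B$, writing $\eta=\sum_j b_j f c_j\in B_1$ yields $\xi f_1\eta=\sum_j(\xi b_j)f_1 f c_j$, whence the dense set $B_1 f_1 B_1=B_2$ lies in $\overline{\mathrm{span}}\{\zeta f_1 f c:\zeta\in B_1,\ c\in B\}=\overline{{}_{B_2}\la Y,X\ra}$. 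Together these verify conditions (1) and (2) of Definition~\ref{def:inclusion}, so $A\subset B$ and $B_1\subset B_2$ are strongly Morita equivalent. The only genuinely delicate points are the identity $ff_1 f=\lambda^{-1}f$ and the placement of $\lambda^{-1}$ inside $A$; the remaining verifications are bookkeeping with the relations $fxf=F(x)f$, $f_1\xi f_1=F_1(\xi)f_1$, and $f_1 b=bf_1$.
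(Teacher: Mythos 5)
Your proof is correct and follows essentially the same route as the paper: the paper also realizes the subspace as $\theta(B)=\Ind_W(F)^{1/2}Bf=Bf$ inside the $B_2-B$-equivalence bimodule $B_1$, uses $F_1(af)=\Ind_W(F)^{-1}a$ and $ff_1f=\Ind_W(F)^{-1}f$ (citing Watatani's Lemma 2.3.5) for the two inner products, and verifies fullness via $f_1b=bf_1$ and $\overline{B_1f_1B_1}=B_2$. The only difference is that the paper normalizes the embedding by $\Ind_W(F)^{1/2}$ so that $\theta$ preserves the inner products of the $B_1-A$-bimodule $B$ on the nose (which is convenient for the subsequent Proposition~\ref{prop:morita3}), whereas your unnormalized map scales them by $\Ind_W(F)^{-1}$ — harmless for Definition~\ref{def:inclusion}, which only asks for a closed subspace with the stated fullness properties.
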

\begin{proof}Let $\theta$ be the linear map from $B$ to $B_1$ defined by
$$
\theta(x)=\Ind_W (F)^{\frac{1}{2}}xf
$$
for any $x\in B$. Then for any $a\in A$, $x, y, z\in B$,
$$
\theta(xfy\cdot z\cdot a)=\theta(xF(yz)a)=\Ind_W (F)^{\frac{1}{2}}xF(yz)af
=\Ind_W (F)^{\frac{1}{2}}xF(yz)fa .
$$
On the other hand, since $\Ind_W (F)\in A\cap B'$,
\begin{align*}
xfy\cdot \theta(z)\cdot a & =xfy\cdot \Ind_W (F)^{\frac{1}{2}}zf\cdot a 
=\sum_{i=1}^n xfyw_i f_1 w_i^* \cdot \Ind_W (F)^{\frac{1}{2}}zf\cdot a \\
& =xfy\Ind_W (F)^{\frac{1}{2}}zfa =xF(y\Ind_W (F)^{\frac{1}{2}}z)fa 
=\Ind_W (F)^{\frac{1}{2}}xF(yz)fa .
\end{align*}
Thus $\theta$ is a $B_1 -A$-bimodule map. Furthermore, for any $x, y\in B$,
\begin{align*}
\la \theta(x), \, \theta(y) \ra_B & =F_1(\theta(x)^* \theta(y))=F_1 ((\Ind_W (F)^{\frac{1}{2}}xf)^* (\Ind_W (F)^{\frac{1}{2}}yf )) \\
& =\Ind_W (F)F_1 (fx^* yf)=\Ind_W (F)F_1 (F(x^* y)f) =F(x^* y ) \\
& =\la x, \, y \ra_A , \\
{}_{B_2} \la \theta(x), \, \theta(y) \ra & =\theta(x)f_1 \theta(y)^* =\Ind_W (F)xff_1 fy^* =xfy^* ={}_{B_1} \la x, \, y \ra
\end{align*}
by \cite [Lemma 2.3.5]{Watatani:index}. Thus we regard $B$ as a closed subspace of the
$B_2 -B$-equivalence bimodule $B_1$ by the map $\theta$. In order to obtain the conclusion,
it suffices to show that ${}_{B_2} \la B, B_1 \ra =B_2$ and $\la B , B_1 \ra_B =B$
since the other conditions in Definition \ref{def:inclusion} clearly
hold. Let $x, y, z\in B$. Then
$$
{}_{B_2} \la x, yfz \ra ={}_{B_2} \la \theta(x), yfz \ra ={}_{B_2}\la \Ind_W (F)^{\frac{1}{2}}xf, yfz \ra
=\Ind_W (F)^{\frac{1}{2}}xff_1 z^* fy .
$$
Since $f_1 z^* =z^* f_1$, ${}_{B_2} \la B, B_1 \ra =B_2$. Also,
\begin{align*}
\la x, yfz \ra_B & = \la \theta(x), yfz \ra_B =\la \Ind_W (F)^{\frac{1}{2}}xf, yfz \ra_B
=F_1 (\Ind_W (F)^{\frac{1}{2}}fx^* yfz) \\
& =F_1 (\Ind_W (F)^{\frac{1}{2}}F(x^* y)fz)=\Ind_W (F)^{-\frac{1}{2}}F(x^* y)z .
\end{align*}
Hence $\la B, B_1 \ra_B =B$. Therefore, we obtain the conclusion.
\end{proof}

\begin{prop}\label{prop:morita3}With the above notations, we regard $B$ as a closed subspace
of $B_2$ by the linear map $\theta$ defined in Lemma \ref{lem:equivalent2} and we suppose that $\Ind_W (F)\in A$.
Then there is a conditional expectation $G$ from $B_1$ onto $B$ with respect to
$F$ and $F_2$.
\end{prop}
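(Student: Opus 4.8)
The plan is to read Proposition \ref{prop:morita3} as an instance of Definition \ref{def:expectation} for the strong Morita equivalence already produced in Lemma \ref{lem:equivalent2}, where the inclusions $A\subset B$ and $B_1\subset B_2$ are strongly Morita equivalent with respect to the $B_2-B$-equivalence bimodule $B_1$ and its closed subspace $\theta(B)$. In the notation of Definition \ref{def:expectation} the two inclusions are interchanged: the algebra acting on the left of the bimodule $Y=B_1$ is $B_2$ (playing the part of $C$) with subalgebra $B_1$ (the part of $A$), while the algebra acting on the right is $B$ (the part of $D$) with subalgebra $A$ (the part of $B$). Hence ``$E^A$'' is the dual expectation $F_2$ from $B_2$ onto $B_1$, ``$E^B$'' is $F$ from $B$ onto $A$, and the desired $G$ is the ``$E^X$'' from $Y=B_1$ onto $X=\theta(B)$. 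So it suffices to produce a linear map $G$ from $B_1$ onto $\theta(B)\cong B$ satisfying the six conditions of Definition \ref{def:expectation} with respect to $F_2$ and $F$.

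First I would define $G$ to be compression by the Jones projection $f$, namely $G(y)=yf$ for $y\in B_1$; equivalently $G$ is characterized, as in Lemma \ref{lem:right_expectation}, by $\la G(y),\,\theta(x)\ra_A=F(\la y,\,\theta(x)\ra_B)$ for $x\in B$, $y\in B_1$, where $\la\,\cdot\,,\,\cdot\,\ra_B=F_1(\,\cdot^*\,\cdot\,)$ is the $B$-valued inner product of $B_1$. Using $fb=bf$ for $b\in A$, $fbf=F(b)f$ and $F_1(bf)=\Ind_W(F)^{-1}b$ for $b\in B$, one checks that $G$ maps $B_1$ onto $Bf=\theta(B)$ and restricts to the identity there, so it is a projection of norm one. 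The three ``$F$-side'' conditions $(4)$, $(5)$, $(6)$ of Definition \ref{def:expectation} then follow exactly as in Lemma \ref{lem:right_expectation}, by expanding a general $y\in B_1=\overline{BfB}$ as $\sum_k a_k f b_k$ and repeatedly applying $fbf=F(b)f$ and $F_1(bf)=\Ind_W(F)^{-1}b$.

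The substance lies in the three ``$F_2$-side'' conditions $(1)$, $(2)$, $(3)$. For these I would first record the correct module actions: the left action of $B_2$ on $B_1$ is the compression $xf_1y\cdot z=xF_1(yz)$, whereas the left action of $B_1\subseteq B_2$ on $B_1$ reduces to ordinary multiplication because $\sum_i w_iF_1(w_i^*z)=z$ for a quasi-basis $\{(w_i,w_i^*)\}$ of $F_1$. Condition $(2)$ is then immediate from $G(ay)=ayf=a\,G(y)$, just as in Lemma \ref{lem:right2}. For conditions $(1)$ and $(3)$ I would compute both sides using $F_2(xf_1y)=\Ind_W(F_1)^{-1}xy$ and the Jones relation $ff_1f=\Ind_W(F)^{-1}f$ from \cite[Lemma 2.3.5]{Watatani:index}; the two sides then agree up to the single scalar factor $\Ind_W(F_1)^{-1}\Ind_W(F)$.

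The main obstacle, and the one point that must be pinned down, is precisely that this scalar equals $1$, i.e. that the index is preserved along the tower, $\Ind_W(F_1)=\Ind_W(F)$. This identity is exactly what forces the compression $G(y)=yf$ to be compatible with the \emph{given} dual expectation $F_2$ rather than with some other expectation from $B_2$ onto $B_1$; equivalently, it is what identifies the abstract expectation ``$E^A$'' supplied by Theorem \ref{thm:morita} (applied to the equivalence of Lemma \ref{lem:equivalent2} together with $F$) with $F_2$. It is a standard fact of Watatani's theory \cite{Watatani:index}, and the standing hypothesis $\Ind_W(F)\in A$ keeps $\Ind_W(F)$ central and $\theta$ well behaved throughout. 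Once $\Ind_W(F_1)=\Ind_W(F)$ is in hand, conditions $(1)$ and $(3)$ close up, all six conditions of Definition \ref{def:expectation} hold, and $G$ is the required conditional expectation from $B_1$ onto $B$ with respect to $F$ and $F_2$.
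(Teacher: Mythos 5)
Your proposal is correct and follows essentially the same route as the paper: the paper also defines $G$ by $G(xfy)=xF(y)f$ (which is exactly your compression $y\mapsto yf$, since $fbf=F(b)f$) and verifies the six conditions of Definition \ref{def:expectation} directly, using $F_2 (x f_1 y)=\Ind_W (F)^{-1}xy$ and $ff_1 f=\Ind_W (F)^{-1}f$ just as you do. Your explicit isolation of the identity $\Ind_W (F_1 )=\Ind_W (F)$ as the point where conditions $(1)$ and $(3)$ close up is a useful clarification of a step the paper uses silently, but it is not a different argument.
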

\begin{proof}Let $G$ be the linear map from $B_1$ onto $B$ defined by
$$
G(xfy)=xF(y)f=\theta (\Ind_W (F)^{-\frac{1}{2}}xF(y))
$$
for any $x, y\in B$, where we identify $\theta (\Ind_W (F)^{-\frac{1}{2}}xF(y))$
with $\Ind_W (F)^{-\frac{1}{2}}xF(y)$.
By routine computations, we can see that $G$ satisfies Conditions (1)-(6) in
Definition \ref{def:expectation}. Indeed, we compute the following:
\newline
(1) For any $x_1 =afb$, $y_1 =a_1 fb_1 \in B_1$, $a, b, a_1 , b_1 \in B$ and
$z\in B$,
\begin{align*}
G(x_1 f_1 y_1 \cdot \theta(z)) & =G(x_1 f_1 y_1 \cdot \Ind_W (F)^{\frac{1}{2}}zf ) 
=G(x_1 F_1(y_1 \Ind_W (F)^{\frac{1}{2}}zf)) \\
& =G(afbF_1 (a_1 fb_1 \Ind_W (F)^{\frac{1}{2}}zf)) \\
& =G(\Ind_W (F)^{\frac{1}{2}}afbF_1 (a_1 F(b_1 z)f)) \\
& =\Ind_W (F)^{-\frac{1}{2}}aF(ba_1 F(b_1 z))f \\
& =\Ind_W (F)^{-\frac{1}{2}}aF(ba_1 )F(b_1 z)f .
\end{align*}
On the other hand,
\begin{align*}
F_2 (x_1 f_1 y_1 )\cdot z  & =\Ind_W (F)^{-1}x_1 y_1 \cdot z
=\Ind_W (F)^{-1}afba_1 f b_1 \cdot z \\
& =\Ind_W (F)^{-1}aF(ba_1 )fb_1 \cdot z
=\Ind_W (F)^{-1}aF(ba_1 )F(b_1 z) .
\end{align*}
Since we identify $\theta(\Ind_W (F)^{-1}aF(ba_1 )F(b_1 z))$ with $\Ind_W (F)^{-\frac{1}{2}}aF(ba_1 )F(b_1 z)f$,
we can see that $G$ satisfies Condition (1) in Definition \ref{def:expectation}.
\newline
(2) For any $a, b, x, y\in B$,
$$
G(afb\cdot xfy)=G(afbxfy)=G(aF(bx)fy)=\theta(\Ind_W (F)^{-\frac{1}{2}}aF(bx)F(y)) .
$$
On the other hand,
\begin{align*}
afb\cdot G(xfy) & =afb\cdot\Ind_W (F)^{-\frac{1}{2}}xF(y)=aF(b\Ind_W (F)^{-\frac{1}{2}}xF(y)) \\
& =\Ind_W (F)^{-\frac{1}{2}}aF(bx)F(y) .
\end{align*}
Thus $G$ satisfies Condition (2) in Definition \ref{def:expectation}.
\newline
(3) For any $x, y, z\in B$,
$$
{}_{B_2} \la G(xfy), \, \theta(z) \ra ={}_{B_2} \la xF(y)f, \, \Ind_W (F)^{\frac{1}{2}}zf \ra
=\Ind_W (F)^{-\frac{1}{2}}xF(y)fz^* .
$$
On the other hand,
\begin{align*}
F_2 ({}_{B_2} \la xfy, \, \theta(z) \ra ) & =F_2 ({}_{B_2} \la xfy, \, \Ind_W (F)^{\frac{1}{2}}zf \ra )
=F_2 (xfyf_1 fz^* \Ind_W (F)^{\frac{1}{2}}) \\
& =\Ind_W (F)^{-\frac{1}{2}}xfyfz^* =\Ind_W (F)^{-\frac{1}{2}}xF(y)fz^* .
\end{align*}
Thus $G$ satisfies Condition (3) in Definition \ref{def:expectation}.
\newline
(4) For any $b, z\in B$,
$$
G(\theta(z)\cdot b)=G(\Ind_W (F)^{\frac{1}{2}}zf\cdot b)=G(\Ind_W (F)^{\frac{1}{2}}zfb)
=\Ind_W (F)^{\frac{1}{2}}zF(b)f .
$$
On the other hand,
$$
\theta(z)\cdot F(b)=\Ind_W (F)^{\frac{1}{2}}zfF(b)=\Ind_W (F)^{\frac{1}{2}}zF(b)f .
$$
Thus $G$ satisfies Condition (3) in Definition \ref{def:expectation}.
\newline
(5) For any $a\in A$, $x, y\in B$,
$$
G(a\cdot xfy)=G(axfy)=axF(y)f=a\cdot G(xfy).
$$
Thus $G$ satisfies Condition (5) in Definition \ref{def:expectation}.
\newline
(6) For any $x, y, z\in B$,
\begin{align*}
F(\la xfy, \, \theta(z) \ra_B ) & =F(F_1 (y^* fx^* \Ind_W (F)^{\frac{1}{2}}zf ))
=F(F_1 (y^* F(x^* z)\Ind_W (F)^{\frac{1}{2}}f)) \\
& =\Ind_W (F)^{-\frac{1}{2}}F(y^* F(x^* z))
=\Ind_W (F)^{-\frac{1}{2}}F(y^* )F(x^* z) .
\end{align*}
On the other hand,
$$
\la G(xfy),\, \theta(z) \ra_B = \la xF(y)f, \, \Ind_W (F)^{\frac{1}{2}}zf \ra_B
=\Ind_W (F)^{-\frac{1}{2}}F(y^* )F(x^* z) .
$$
Thus $G$ satisfies Condition (6) in Definition \ref{def:expectation}.
Therefore, we obtain the conclusion.
\end{proof}

\section{Linking algebras and conditional expectations}\label{sec:linking}
Let $A\subset C$ and $B\subset D$ be unital inclusions of unital $C^*$-algebras,
which are strongly Morita equivalent with respect to a $C-D$-equivalence
bimodule $Y$ and its closed subspace $X$. We regard $Y$ and $X$ as a full right
Hilbert $D$-module and its closed subspace, respectively. Then $Y$ and $X$ satisfy Conditions 
at the beginning of Section \ref{sec:one-sided}. 
We also note that the full right Hilbert $D$-module $ Y\oplus D$
and its closed subspace $X\oplus B$ satisfy Conditions at the beginning of Section \ref{sec:one-sided}.
Let $L_X =\BB_B (X\oplus B)$ and $L_Y =\BB_D (Y\oplus D)$.
By Raeburn and Williams
\cite [Corollary 3.21]{RW:continuous}, $L_X$ and $L_Y$ are isomorphic to
the linking algebras induced by equivalence bimodules $X$ and $Y$, respectively.
We denote the linking algebras by the same symbols $L_X$ and $L_Y$, respectively.
In the same way as in the proof of Brown, Green and Rieffel \cite [Theorem 1.1]{BGR:linking},
we obtain the following proposition:

\begin{prop}\label{prop:corner}Let $A\subset C$ and $B\subset D$ be unital
inclusions of unital $C^*$-algebras. Then the inclusions $A\subset C$ and $B\subset D$ are
strongly Morita equivalent if and only if there is a unital inclusion of unital $C^*$-algebras
$K\subset L$ and projections in $K$ satisfying that
\newline
$(1)$ $pKp\cong A$, $pLp\cong C$,
\newline
$(2)$ $qKq\cong B$, $qLq\cong D$,
\newline
$(3)$ $KpK=KqK=K$, $LpL=LqL=L$, $p+q=1_L$.
\end{prop}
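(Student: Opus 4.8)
The plan is to prove the two directions of the equivalence separately, with the ``only if'' direction being essentially a linking-algebra construction and the ``if'' direction following by extracting corners.

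For the forward direction, suppose $A\subset C$ and $B\subset D$ are strongly Morita equivalent with respect to a $C-D$-equivalence bimodule $Y$ and its closed subspace $X$. The idea is to form the linking algebra $L=L_Y=\BB_D(Y\oplus D)$ discussed just before the statement, which by \cite[Corollary 3.21]{RW:continuous} is the linking algebra of $Y$, and to let $K=L_X=\BB_B(X\oplus B)$ be the smaller linking algebra associated with $X$. Concretely $L$ consists of $2\times 2$ matrices $\bigl[\begin{smallmatrix} c & y \\ \widetilde{y'} & d\end{smallmatrix}\bigr]$ with $c\in C$, $d\in D$, $y,y'\in Y$, and $K$ is the closed subspace of such matrices with $c\in A$, $d\in B$, $y,y'\in X$. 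First I would verify that $K$ is a unital $C^*$-subalgebra of $L$: closure under multiplication and adjoints follows from Definition \ref{def:inclusion}, since $a\cdot x\in X$, $x\cdot b\in X$, ${}_C\la X,X\ra\subset A$ and $\la X,X\ra_D\subset B$, while the units coincide because $\overline{{}_C\la X,X\ra}=A$ together with the finite set from Kajiwara--Watatani gives $1_L\in K$. Then I would set $p=\bigl[\begin{smallmatrix} 1_C & 0 \\ 0 & 0\end{smallmatrix}\bigr]$ and $q=\bigl[\begin{smallmatrix} 0 & 0 \\ 0 & 1_D\end{smallmatrix}\bigr]$, which are projections in $K$ with $p+q=1_L$.

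With these choices, conditions $(1)$ and $(2)$ are the standard corner identifications: $pLp=C\otimes e_{11}\cong C$ and $pKp\cong A$, and symmetrically $qLq\cong D$, $qKq\cong B$. The fullness conditions in $(3)$ are where the strong Morita data is used in full: the identity $\overline{{}_C\la Y,X\ra}=C$ (and its one-sided analogues) forces $LpL=L$ and $KpK=K$, because the off-diagonal corners $pLq\cong Y$ and $pKq\cong X$ are generated under the inner products exactly by the assumption that $Y$ and $X$ are full bimodules, and $KpK=K$ uses $\overline{{}_C\la X,X\ra}=A$, $\overline{\la X,X\ra_D}=B$. The symmetric statements $LqL=L$, $KqK=K$ follow the same way from $\overline{\la Y,X\ra_D}=D$. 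These verifications are precisely the content of the computation in Brown--Green--Rieffel \cite[Theorem 1.1]{BGR:linking}, adapted to keep track of the two nested algebras $K\subset L$ simultaneously.

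For the reverse direction, suppose such an inclusion $K\subset L$ with projections $p,q$ is given. Here I set $Y=pLq$ and $X=pKq$; then $Y$ is a $pLp-qLq$-equivalence bimodule with the inner products ${}_{pLp}\la y,y'\ra=yy'^*$ and $\la y,y'\ra_{qLq}=y^*y'$, and fullness of these inner products is exactly $LpL=L$ and $LqL=L$. Using the isomorphisms $pLp\cong C$, $qLq\cong D$, $pKp\cong A$, $qKq\cong B$ from $(1)$ and $(2)$, I would transport this structure to realize $Y$ as a $C-D$-equivalence bimodule and $X$ as its closed subspace, and then check the conditions of Definition \ref{def:inclusion}: the algebraic containments $a\cdot x\in X$, $x\cdot b\in X$, ${}_C\la x,y\ra\in A$, $\la x,y\ra_B\in B$ hold because $pKq$ is preserved by multiplication by $pKp$ and $qKq$ and its inner products land in $pKp$ and $qKq$, while the fullness statements $\overline{{}_C\la X,X\ra}=A$, etc., follow from $KpK=K$ and $KqK=K$. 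I expect the main obstacle to be bookkeeping rather than conceptual: one must confirm that all four corner isomorphisms are compatible with the inner-product and module structures simultaneously, so that the equivalence-bimodule structure on $Y=pLq$ restricts correctly to $X=pKq$ and matches the inclusions $A\subset C$, $B\subset D$. Since the underlying argument is the standard linking-algebra correspondence of \cite{BGR:linking}, the bulk of the work is to carry the second inclusion $K\subset L$ through that correspondence in parallel.
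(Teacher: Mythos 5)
Your proposal is correct and takes essentially the same route as the paper: the paper's proof consists precisely of forming the nested linking algebras $K=L_X\subset L=L_Y$ (identified with matrix linking algebras via Raeburn--Williams) with the diagonal projections $p,q$, and running the argument of Brown--Green--Rieffel Theorem 1.1 in both directions. Your sketch of the converse via the corners $Y=pLq$, $X=pKq$ is also the intended one, the only detail left implicit being that $\overline{pLqKp}$ is a closed left ideal of $pLp$ containing the unit (hence all of $pLp$), which settles $\overline{{}_C\la Y,X\ra}=C$.
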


We suppose that there is a conditional expectation $E^B$ of Watatani index-finite type from $D$
onto $B$. By Lemma \ref{lem:right_expectation},
there is a right conditional expectation $E^X$ from $Y$ onto $X$ with respect to
$E^B$.

\begin{lemma}\label{lem:sum}The linear map $E^X \oplus E^B$ is a right
conditional expectation from $Y\oplus D$ onto $X\oplus B$ with respect to
$E^B$.
\end{lemma}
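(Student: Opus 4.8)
The plan is to reduce the claim to a componentwise application of Definition \ref{def:right}. First I would record the elementary observation that $E^B$, regarded as a linear map from the full right Hilbert $D$-module $D$ (over itself) onto its closed subspace $B$, is itself a right conditional expectation from $D$ onto $B$ with respect to $E^B$. Indeed, conditions (1)--(3) of Definition \ref{def:right} in this case read $E^B(b'd)=b'\cdot E^B(d)$, $E^B(d'b)=E^B(d')\cdot b$, and $E^B(\la d', b'\ra_D)=E^B((d')^*b')=E^B(d')^*b'=\la E^B(d'), b'\ra_D$, where $b'\in B$ and $d, d'\in D$; all three are immediate from the bimodule property of the conditional expectation $E^B$ together with $E^B(x^*)=E^B(x)^*$.

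Next I would unwind the Hilbert module structure of $Y\oplus D$. Its right $D$-action is componentwise, $(y, d')\cdot d=(y\cdot d, d'd)$, and its $D$-valued inner product is the orthogonal sum $\la (y_1, d_1'), (y_2, d_2')\ra_D=\la y_1, y_2\ra_D+(d_1')^*d_2'$, with no coupling between the two legs. Since $E^X\oplus E^B$ also acts componentwise and carries $Y\oplus D$ onto $X\oplus B$ (as $E^X$ maps $Y$ onto $X$ by Lemma \ref{lem:right_expectation} and $E^B$ maps $D$ onto $B$), each of the three defining conditions of Definition \ref{def:right} for $E^X\oplus E^B$ splits into the corresponding condition for $E^X$ on $Y$ and for $E^B$ on $D$.

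Concretely, I would dispatch conditions (1) and (2) by a direct componentwise computation, invoking Lemma \ref{lem:right_expectation} on the $Y$-leg and the first paragraph on the $D$-leg; these are routine. The step requiring the most care is condition (3): for $(y, d')\in Y\oplus D$ and $(x, b')\in X\oplus B$ one has $E^B(\la (y, d'),(x, b')\ra_D)=E^B(\la y, x\ra_D+(d')^*b')=E^B(\la y, x\ra_D)+E^B((d')^*b')$, and it is exactly the orthogonality of the two summands that guarantees no mixed contributions. Applying condition (3) of Lemma \ref{lem:right_expectation} to the first term and the first paragraph to the second yields $\la E^X(y), x\ra_D+\la E^B(d'), b'\ra_D=\la (E^X(y), E^B(d')),(x, b')\ra_D$, which is precisely condition (3) for $E^X\oplus E^B$. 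The only genuine obstacle is thus the bookkeeping of the orthogonal direct sum so that the inner product does not couple the $Y$ and $D$ components; once this is in place the lemma follows.
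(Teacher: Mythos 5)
Your proposal is correct and follows essentially the same route as the paper: a componentwise verification of Conditions (1)--(3) of Definition \ref{def:right}, using that the inner product on $Y\oplus D$ is the orthogonal sum $\la y_1 , y_2 \ra_D +d_1^* d_2$ so that no cross terms arise in Condition (3). Your preliminary observation that $E^B$ is itself a right conditional expectation on the $D$-leg is left implicit in the paper (it is just the bimodule property $E^B (d^* b)=E^B (d)^* b$ used in its computation), so the two arguments coincide in substance.
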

\begin{proof}
We show that Conditions (1)-(3) in Definition \ref{def:right} hold.
\newline
(1) For any $x\in X$, $b\in B$, $d\in D$,
$$
(E^X \oplus E^B )((x\oplus b)\cdot d)=(E^X \oplus E^B )((x\cdot d)\oplus bd)
=x\cdot E^B (d)\oplus bE^B (d)=(x\oplus b)\cdot E^B (d) .
$$
(2) For any $b\in B$, $y\in Y$, $d\in D$,
$$
(E^X \oplus E^B )((y\oplus d) \cdot b)=(E^X \oplus E^B )((y\cdot b)\oplus db)=(E^X (y)\oplus d)\cdot b .
$$
(3) For any $x\in X$, $b\in B$, $y\in Y$, $d\in D$,
\begin{align*}
\la (E^X \oplus E^B )(y\oplus d), \, x\oplus b \ra_D & =\la E^X (y)\oplus E^B (d), \, x\oplus b \ra_D \\
& =\la E^X (y), \, x \ra_D +E^B (d)^* b \\
& =E^B ( \la y, \, x \ra_D )+E^B (d^* b) \\
& =E^B ( \la y\oplus d, \, x\oplus b \ra_D ) .
\end{align*}
Therefore, Conditions (1)-(3) in Definition \ref{def:right} hold.
\end{proof}

By Proposition \ref{prop:expectation2} and Corollary \ref{cor:finite},
there is a conditional expectation $E^{L_X}$ of Watatani index-finite type from $L_Y$ onto $L_X$
such that $E^X \oplus E^B$ is a conditional expectation from $Y\oplus D$ onto
$X\oplus B$ with respect to $E^{L_X}$ and $E^B$.
Since we identify $L_X$ and $L_Y$ with
the linking algebras induced by equivalence bimodules $X$ and $Y$, respectively,
we obtain the following proposition:

\begin{prop}\label{prop:linking}With the above notations, we can write
$$
E^{L_X}(\begin{bmatrix} c & x \\
\widetilde{y} & d \end{bmatrix})=\begin{bmatrix} E^A (c) & E^X (x) \\
\widetilde{E^X (y)} & E^B (d) \end{bmatrix}
$$
for any element $\begin{bmatrix} c & x \\
\widetilde{y} & d \end{bmatrix}\in L_Y$, where for any $z\in X$,
we denote by $\widetilde{z}$ its corresponding element in $\widetilde{X}$, the dual Hilbert $C^*$-bimodule of $X$.
\end{prop}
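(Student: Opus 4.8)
The plan is to read the asserted formula off directly from the \emph{construction} of $E^{L_X}$. Recall that $E^{L_X}$ was produced by applying Proposition \ref{prop:expectation2} to the full right Hilbert $D$-module $Y\oplus D$ with closed subspace $X\oplus B$, the ambient algebra $L_Y=\BB_D (Y\oplus D)$ and its subalgebra $L_X=\BB_B (X\oplus B)$, and the right conditional expectation $E^X \oplus E^B$ of Lemma \ref{lem:sum}. Consequently $E^{L_X}$ is characterised by
$$
E^{L_X}(T)\cdot\xi=(E^X \oplus E^B)(T\cdot\xi)
$$
for every $T\in L_Y$ and every $\xi\in X\oplus B$. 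Since we have identified $L_Y$ with $\begin{bmatrix} C & Y \\ \widetilde{Y} & D \end{bmatrix}$ and $L_X$ with $\begin{bmatrix} A & X \\ \widetilde{X} & B \end{bmatrix}$, I would first record the action of a matrix $T=\begin{bmatrix} c & x \\ \widetilde{y} & d \end{bmatrix}\in L_Y$ on a vector $z\oplus b'\in X\oplus B\subset Y\oplus D$, namely
$$
T\cdot(z\oplus b')=(c\cdot z+x\cdot b')\oplus(\la y, z \ra_D +db') .
$$

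Next I would introduce the candidate map $\Phi\colon L_Y\to L_X$ given by the matrix on the right-hand side of the asserted formula, where $E^A$ is the conditional expectation from $C$ onto $A$ furnished by Proposition \ref{prop:expectation2} for the pair $(Y,X)$, characterised by $E^A (c)\cdot z=E^X (c\cdot z)$. One checks at once that $\Phi(T)$ genuinely lies in $L_X$, since its entries are $E^A (c)\in A$, $E^X (x)\in X$, $\widetilde{E^X (y)}\in\widetilde{X}$ and $E^B (d)\in B$. Computing the action of $\Phi(T)$ on $z\oplus b'$ then gives
$$
\Phi(T)\cdot(z\oplus b')=(E^A (c)\cdot z+E^X (x)\cdot b')\oplus(\la E^X (y), z \ra_D +E^B (d)b') .
$$

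The heart of the argument is to match this, componentwise, with $(E^X \oplus E^B)(T\cdot\xi)$. For the first component I would use $E^A (c)\cdot z=E^X (c\cdot z)$ (Condition (1) of Definition \ref{def:expectation}) together with Condition (5) of Definition \ref{def:expectation} to rewrite $E^X (c\cdot z)+E^X (x)\cdot b'=E^X (c\cdot z+x\cdot b')$. For the second component I would use Condition (6) of Definition \ref{def:expectation}, in the form $E^B (\la y, z \ra_D )=\la E^X (y), z \ra_D$, together with the module property $E^B (db')=E^B (d)b'$ of the conditional expectation $E^B$, to rewrite $\la E^X (y), z \ra_D +E^B (d)b'=E^B (\la y, z \ra_D +db')$. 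This yields $\Phi(T)\cdot\xi=(E^X \oplus E^B)(T\cdot\xi)=E^{L_X}(T)\cdot\xi$ for all $\xi\in X\oplus B$. Since an element of $L_X=\BB_B (X\oplus B)$ is determined by its action on $X\oplus B$, it follows that $\Phi(T)=E^{L_X}(T)$, which is exactly the claimed formula.

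The calculations are entirely routine; the only real care is bookkeeping with the linking-algebra conventions, that is, fixing the direction of the off-diagonal actions (so that $\widetilde{y}$ acts by $\eta\mapsto\la y,\eta\ra_D$) and tracking the passage to the dual module $\widetilde{X}$ in the lower-left corner. All the substantive content, namely the existence of $E^{L_X}$, its Watatani index-finiteness, and the fact that $E^A$, $E^B$ and $E^X$ fit together as in Definition \ref{def:expectation}, has already been established, so I expect no genuine obstacle beyond this bookkeeping.
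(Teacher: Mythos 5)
Your proposal is correct and follows essentially the same route as the paper: both arguments use the defining relation $E^{L_X}(T)\cdot \xi=(E^X\oplus E^B)(T\cdot \xi)$ for $\xi\in X\oplus B$ coming from Proposition \ref{prop:expectation2}, compute the action of the candidate matrix of expectations on $x\oplus b$, and match the two using Conditions (1), (5), (6) of Definition \ref{def:expectation} together with the $B$-bimodule property of $E^B$. The only cosmetic difference is that the paper carries out the computation on the rank-one operators $\theta_{y\oplus d,\,z\oplus f}$ via their explicit matrix form from \cite[Corollary 3.21]{RW:continuous}, whereas you work directly with an arbitrary matrix entry-wise; both are legitimate since operators in $L_X=\BB_B(X\oplus B)$ are determined by their action on $X\oplus B$.
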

\begin{proof}Let $\theta_{y\oplus d, z\oplus f}$ be the rank-one operator on $Y\oplus D$ induced
by $y\oplus d, z\oplus f \in Y\oplus D$. Then by Definition \ref{def:expectation},
for any $x\oplus b\in X\oplus B$,
\begin{align*}
E^{L_X} (\theta_{y\oplus d, z\oplus f})\cdot (x\oplus b) & =(E^X \oplus E^B )(\theta_{y\oplus d, z\oplus f} (x\oplus b)) \\
& =(E^X \oplus E^B )(y\oplus d \cdot \la z\oplus f, \, x\oplus d \ra_D ) \\
& =(E^X \oplus E^B )(y\oplus d\cdot (\la z, x \ra _D +f^* b )) \\
& =E^X (y\cdot (\la z, x \ra_D +f^* b ))\oplus E^B (d(\la z, x \ra_D +f^* b )) .
\end{align*}
On the other hand, since we identify $L_X$ and $L_Y$ with the linking algebras
induced by $X$ and $Y$, respectively, by the proof of \cite [Corollary 3.21]{RW:continuous},
we regard $\theta_{y\oplus d, z\oplus f}$ as an element
$\begin{bmatrix} {}_C \la y, z \ra & y\cdot f^* \\
\widetilde{z\cdot d^* } & df^* \end{bmatrix}$. Then
\begin{align*}
\begin{bmatrix}E^A ({}_C \la y , z \ra ) & E^X (y\cdot f^* ) \\
\widetilde{E^X (z\cdot d^* )} & E^B (df^* ) \end{bmatrix}
\left[
\begin{array}{ccc}
x \\
b
\end{array} \right]
& =\left[ \begin{array}{ccc}
E^A ({}_C \la y, z \ra) \cdot x+E^X (y\cdot f^* )\cdot b \\
\la E^X (z\cdot d^* ), \, x \ra_D +E^B (df^* )b \end{array}\right] \\
& =\left[ \begin{array}{ccc} E^X ({}_C \la y, \, z \ra \cdot x+y\cdot f^* b ) \\
E^B (\la z\cdot d^*, \, x \ra_D +df^* b ) \end{array} \right] \\
& =E^{L_X}(\theta_{y\oplus d, z\oplus f})\cdot (x\oplus b ) .
\end{align*}
Therefore, we obtain the conclusion.
\end{proof}

\begin{lemma}\label{lem:basis}With the above notations, let $\{(u_i, \, u_i^* )\}_{i=1}^n$
and $\{(v_j, \, v_j^* )\}_{j=1}^m$ be any quasi-bases for $E^A$ and $E^B$, respectively. Then
for any $y\in Y$,
$$
y=\sum_{j=1}^m E^X (y\cdot v_j )\cdot v_j^* =\sum_{i=1}^n u_i \cdot E^X (u_i^* \cdot y) .
$$
\end{lemma}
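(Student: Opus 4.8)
The plan is to prove each of the two reconstruction identities by the same device: show that the proposed right-hand side and $y$ have identical inner products against every element of $X$, and then upgrade this to an equality in $Y$ via a frame (reconstruction) formula. For the first identity I would test with the right $D$-valued inner product, and for the second with the left $C$-valued inner product, because these are precisely the pairings in which the compatibility conditions of Definition \ref{def:expectation} can be applied directly to the entries produced by $E^X$.

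For the first identity, fix $x\in X$ and evaluate $\la \sum_{j=1}^m E^X(y\cdot v_j)\cdot v_j^*, \, x\ra_D$. Pulling $v_j^*$ out of the first slot gives $\sum_j v_j\la E^X(y\cdot v_j), \, x\ra_D$; since $E^X(y\cdot v_j)\in X$, Condition $(6)$ of Definition \ref{def:expectation} rewrites $\la E^X(y\cdot v_j), x\ra_D=E^B(\la y\cdot v_j, x\ra_D)=E^B(v_j^*\la y, x\ra_D)$, so the total collapses to $\sum_j v_j E^B(v_j^*\la y, x\ra_D)=\la y, x\ra_D$ by the quasi-basis property of $\{(v_j, v_j^*)\}$ for $E^B$. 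Hence $w:=y-\sum_j E^X(y\cdot v_j)\cdot v_j^*$ satisfies $\la w, x\ra_D=0$ for all $x\in X$; in particular $\la x_i, w\ra_D=\la w, x_i\ra_D^*=0$, and the right-module reconstruction $w=\sum_i x_i\cdot\la x_i, w\ra_D$ furnished by Condition $(3)$ at the start of Section \ref{sec:one-sided} forces $w=0$.

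The second identity is the mirror image. Fixing $x\in X$, I compute ${}_C\la x, \sum_{i=1}^n u_i\cdot E^X(u_i^*\cdot y)\ra=\sum_i {}_C\la x, E^X(u_i^*\cdot y)\ra u_i^*$. Both entries of this inner product lie in $X$, so Condition $(3)$ of Definition \ref{def:expectation} (together with the fact that $E^A$ is $*$-linear, applied after taking adjoints) yields ${}_C\la x, E^X(u_i^*\cdot y)\ra=E^A({}_C\la x, u_i^*\cdot y\ra)=E^A({}_C\la x, y\ra u_i)$, and the quasi-basis identity ${}_C\la x, y\ra=\sum_i E^A({}_C\la x, y\ra u_i)u_i^*$ for $E^A$ collapses the sum to ${}_C\la x, y\ra$. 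Thus $w':=y-\sum_i u_i\cdot E^X(u_i^*\cdot y)$ has ${}_C\la x, w'\ra=0$ for every $x\in X$, whence ${}_C\la w', x_i\ra=0$ and the left-module reconstruction $w'=\sum_i {}_C\la w', x_i\ra\cdot x_i$ gives $w'=0$.

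The single delicate point, and the only genuine obstacle, is the closing step in each case: passing from orthogonality to all of $X$ to the vanishing of $w$ (respectively $w'$), which requires a reconstruction formula whose frame vectors lie in $X$. For the first identity this is exactly the right-module formula given as Condition $(3)$ of Section \ref{sec:one-sided}. For the second I must supply the left-module analogue $y=\sum_i {}_C\la y, x_i\ra\cdot x_i$ with $x_i\in X$; this is legitimate because there are $x_i\in X$ with $\sum_i\la x_i, x_i\ra_D=\sum_i\la x_i, x_i\ra_B=1_D$ (the inclusion $B\subset D$ being unital), and $\sum_i{}_C\la y,x_i\ra\cdot x_i=y\cdot\sum_i\la x_i,x_i\ra_D=y$ by imprimitivity, as recorded in the left-conditional-expectation framework of Remark \ref{remark:left}. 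With both frames available the two symmetric computations close off and the lemma follows.
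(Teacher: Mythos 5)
Your proof is correct, but it follows a genuinely different route from the paper's. The paper proves this lemma by first passing to the concrete matrix model of Section \ref{sec:definition}: it assumes $B=pM_k(A)p$, $D=pM_k(C)p$, $X=(1\otimes f)M_k(A)p$, $Y=(1\otimes f)M_k(C)p$, and moreover that $E^B$ and $E^X$ are the restrictions of $E^A\otimes\id_{M_k(\BC)}$; the two identities then fall out of a one-line computation with the quasi-basis identity $\sum_i u_iE^A(u_i^*c)=c$ applied entrywise. You instead argue axiomatically: you pair the candidate right-hand side against an arbitrary $x\in X$, use Condition $(6)$ (resp.\ Condition $(3)$) of Definition \ref{def:expectation} to push $E^X$ onto $E^B$ (resp.\ $E^A$), collapse the sum with the quasi-basis identity, and finish by the frame reconstruction formulas $y=\sum_i x_i\cdot\la x_i,y\ra_D$ and $y=\sum_i{}_C\la y,x_i\ra\cdot x_i$ with frame vectors in $X$ — both of which are available here, the first being Condition $(3)$ at the start of Section \ref{sec:one-sided} (which the paper asserts holds in this setting, via Kajiwara--Watatani giving $x_i\in X$ with $\sum_i{}_A\la x_i,x_i\ra=1$) and the second following from $x_i\in X$ with $\sum_i\la x_i,x_i\ra_B=1$. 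Each approach has a virtue: the paper's computation is shorter once the matrix-model reduction is accepted, but that reduction silently uses that the given $E^B$ and $E^X$ coincide with the standard ones under the identification (true, since $E^X$ is determined by $E^B$ through Condition $(6)$ and fullness, but unremarked); your argument avoids the model entirely, makes explicit exactly which axioms of Definition \ref{def:expectation} drive each of the two equalities, and handles the ``any quasi-bases'' clause transparently. The only point worth flagging is that your closing step for the first identity leans on Condition $(3)$ of Section \ref{sec:one-sided} as if it were a standing hypothesis; in the Section \ref{sec:linking} context it is a consequence of the equivalence-bimodule structure rather than an assumption, so a one-line justification (existence of $x_i\in X$ with $\sum_i{}_A\la x_i,x_i\ra=1$) would make the proof self-contained.
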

\begin{proof}
By the discussions in Section \ref{sec:definition}, we may assume the following:
$$
B=pM_k (A)p, \quad D=pM_k (C)p, \quad X=(1\otimes f)M_k (A)p, \quad Y=(1\otimes f)M_k (C)p,
$$
where $k$ is a positive integer,
$f=\begin{bmatrix} 1 & 0 & \ldots & 0 \\
0 & 0 & \ldots & 0 \\
\vdots & \vdots & \ddots & \vdots \\
0 & 0 & \ldots & 0 \end{bmatrix}_{k\times k}$ and $p$ is a full projection in $M_k (A)$.
Furthermore, we regard $X$ and $Y$ as
an $A-pM_k (A)p$-equivalence bimodule and a $C-pM_k (C)p$-equivalence bimodule
in the usual way. Also, we can suppose that
$$
E^B =(E^A \otimes\id_{M_k (\BC)}) |_{pM_k (C)p}, \quad
E^X =(E^A \otimes\id_{M_k (\BC)}) |_{(1\otimes f)M_k (C)p} ,
$$
respectively. Let $\{(u_i ,\, u_i^* )\}_{i=1}^n$ be any quasi-basis for $E^A$. For any $c\in C$,
$h\in M_k (\BC)$,
\begin{align*}
\sum_{i=1}^n u_i \cdot E^X (u_i^* \cdot (1\otimes f)(c\otimes h)p) & =
\sum_{i=1}^n u_i \cdot (E^A \otimes\id_{M_k (\BC)})((u_i^* \otimes f)(c\otimes h)p) \\
& =\sum_{i=1}^n u_i \cdot (E^A (u_i^* c)\otimes fh)p \\
& =\sum_{i=1}^n (u_iE^A (u_i^* c)\otimes fh)p \\
& =\sum_{i=1}^n (c\otimes fh)p=(1\otimes f)(c\otimes h)p .
\end{align*}
Replacing the left hand side by the right hand side, in the similar way to the above, we
can obtain the other equation.
\end{proof}

\begin{lemma}\label{lem:index}With the above notations, for any $y\in Y$,
$$
\Ind_W (E^A )\cdot y =y\cdot \Ind_W (E^B ) .
$$
\end{lemma}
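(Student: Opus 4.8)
The plan is to derive the identity directly from the two reconstruction formulas in Lemma \ref{lem:basis}, using only that the Watatani indices are the sums $\Ind_W (E^A )=\sum_{i=1}^n u_i u_i^*$ and $\Ind_W (E^B )=\sum_{j=1}^m v_j v_j^*$ over quasi-bases $\{(u_i , u_i^* )\}_{i=1}^n$ and $\{(v_j , v_j^* )\}_{j=1}^m$ for $E^A$ and $E^B$. This is the cleanest route, since Lemma \ref{lem:basis} has just supplied exactly the two expansions needed.

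First I would fix $y\in Y$ and write $\Ind_W (E^A )\cdot y=\sum_{i=1}^n u_i \cdot (u_i^* \cdot y)$, using associativity of the left $C$-action on $Y$. Since each $u_i^* \cdot y$ lies in $Y$, I would then substitute the first reconstruction formula of Lemma \ref{lem:basis}, namely $w=\sum_{j=1}^m E^X (w\cdot v_j )\cdot v_j^*$, applied to $w=u_i^* \cdot y$. This yields the double sum $\sum_{i,j} u_i \cdot E^X (u_i^* \cdot y\cdot v_j )\cdot v_j^*$.

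The key step is to collapse the $i$-summation for each fixed $j$. Writing $E^X (u_i^* \cdot y\cdot v_j )=E^X (u_i^* \cdot (y\cdot v_j ))$ and applying the second reconstruction formula $w=\sum_{i=1}^n u_i \cdot E^X (u_i^* \cdot w)$ with $w=y\cdot v_j \in Y$, the inner sum $\sum_{i=1}^n u_i \cdot E^X (u_i^* \cdot (y\cdot v_j ))$ is exactly $y\cdot v_j$. Hence the double sum reduces to $\sum_{j=1}^m (y\cdot v_j )\cdot v_j^* =y\cdot \sum_{j=1}^m v_j v_j^* =y\cdot \Ind_W (E^B )$, which is the desired equality.

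I do not anticipate a genuine obstacle; the content is essentially organizational. The one point to handle with care is the bookkeeping of the two quasi-bases and the order in which the two forms of Lemma \ref{lem:basis} are invoked: one must first expand $u_i^* \cdot y$ by the right quasi-basis $\{(v_j , v_j^* )\}$ and only afterwards recognize the left quasi-basis resolution of $y\cdot v_j$. Throughout, associativity of the $C$-$D$-bimodule actions on $Y$ is used freely to regroup the triple products $u_i^* \cdot y\cdot v_j$. As an alternative one could pass to the concrete model $Y=(1\otimes f)M_k (C)p$, $D=pM_k (C)p$ recalled in the proof of Lemma \ref{lem:basis} and invoke the centrality of $\Ind_W (E^A )$ in $C$ together with the earlier computation $\Ind_W (E_p^A )=(\Ind_W (E^A )\otimes I_k )p$, but the intrinsic argument above is shorter and avoids the identifications.
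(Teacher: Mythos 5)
Your proof is correct and is essentially the paper's own argument: both evaluate the double sum $\sum_{i,j}u_i \cdot E^X (u_i^* \cdot y\cdot v_j )\cdot v_j^*$ in the two possible orders using the two reconstruction formulas of Lemma \ref{lem:basis}, obtaining $\Ind_W (E^A )\cdot y$ one way and $y\cdot \Ind_W (E^B )$ the other. The only difference is presentational (you start from one side and arrive at the other, while the paper shows both sides equal the common double sum).
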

\begin{proof}
By Lemma \ref{lem:basis}, for any $y\in Y$,
$$
\sum_{i,j}u_i \cdot E^X (u_i^* \cdot y\cdot v_j )\cdot v_j^*
=\sum_j y\cdot v_j v_j^* =y\cdot\Ind_W (E^B ) .
$$
Similarly 
$$
\sum_{i, j}u_i \cdot E^X (u_i^* \cdot y\cdot v_j )\cdot v_j^* =\Ind_W (E^A )\cdot y .
$$
Hence, we obtain the conclusion.
\end{proof}

\begin{cor}\label{cor:quasi-basis}With the above notations,
$$
\{( \begin{bmatrix} u _i & 0 \\
0 & v_j \end{bmatrix} \, , \, \begin{bmatrix} u _i & 0 \\
0 & v_j \end{bmatrix}^* ) \, | \, i=1,2,\dots, n, \, j=1,2,\dots, m \}
$$
is a quasi-basis for $E^{L_X}$ and $\Ind_W (E^{L_X})=\begin{bmatrix} \Ind_W (E^A ) & 0 \\
0 & \Ind_W (E^B ) \end{bmatrix}$.
\end{cor}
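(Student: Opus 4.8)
The plan is to verify directly that the displayed family is a quasi-basis for $E^{L_X}$ and then to read off the Watatani index. Writing $w_{ij}=\begin{bmatrix} u_i & 0 \\ 0 & v_j \end{bmatrix}$, I must check the two defining identities $\sum w_{ij}\, E^{L_X}(w_{ij}^* \, T)=T$ and $\sum E^{L_X}(T\, w_{ij})\, w_{ij}^*=T$ for every $T\in L_Y$, and then compute $\Ind_W(E^{L_X})=\sum w_{ij} w_{ij}^*$. First I would represent a general element of $L_Y$ as a block matrix $T=\begin{bmatrix} c & x \\ \widetilde{y} & d \end{bmatrix}$ with $c\in C$, $x\in Y$, $d\in D$ and $\widetilde{y}$ in the dual module, and invoke the linking-algebra multiplication rules together with the identifications $C=\BB_D(Y)$ from Section \ref{sec:linking}.

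Next I would compute $w_{ij}^* \, T$ block by block, obtaining $u_i^* c$, $u_i^* \cdot x$, $\widetilde{y\cdot v_j}$ and $v_j^* d$ in the four corners (using $d\cdot\widetilde{z}=\widetilde{z\cdot d^*}$ for the lower-left entry), and then apply $E^{L_X}$ through the explicit formula of Proposition \ref{prop:linking}, which replaces these corners by $E^A(u_i^* c)$, $E^X(u_i^* \cdot x)$, $\widetilde{E^X(y\cdot v_j)}$ and $E^B(v_j^* d)$. Multiplying on the left by $w_{ij}$ and summing, the verification splits into the four matrix corners: the $C$-corner is rebuilt from the quasi-basis identity $\sum_i u_i E^A(u_i^* c)=c$ for $E^A$, the $D$-corner from $\sum_j v_j E^B(v_j^* d)=d$ for $E^B$, and the two off-diagonal corners from the two expansions $\sum_i u_i\cdot E^X(u_i^* \cdot y)=y=\sum_j E^X(y\cdot v_j)\cdot v_j^*$ of Lemma \ref{lem:basis}. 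The symmetric identity follows by the same bookkeeping with the roles of the two expansions interchanged.

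The index computation is then immediate: since each $w_{ij}$ is diagonal, $\sum w_{ij} w_{ij}^*$ is the diagonal matrix whose entries are $\sum_i u_i u_i^*=\Ind_W(E^A)$ and $\sum_j v_j v_j^*=\Ind_W(E^B)$, which is the asserted block form. As a consistency check I would note that this block element acts correctly on the off-diagonal modules in view of $\Ind_W(E^A)\cdot y=y\cdot\Ind_W(E^B)$ from Lemma \ref{lem:index}.

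The step I expect to require the most care is the dual-module corner $\widetilde{Y}$: one must track precisely how $v_j^*$ and $v_j$ act on $\widetilde{y}$ and confirm that this corner is reconstructed by the expansion $y=\sum_j E^X(y\cdot v_j)\cdot v_j^*$ rather than by the other one. Equally delicate is to organize the summation so that each diagonal corner is rebuilt without spurious multiplicity, the $C$-corner from $\sum_i u_i E^A(u_i^* c)=c$ and the $D$-corner from $\sum_j v_j E^B(v_j^* d)=d$, each using only its own index; this is exactly the point at which the clean block-diagonal form $\begin{bmatrix} \Ind_W(E^A) & 0 \\ 0 & \Ind_W(E^B) \end{bmatrix}$ is pinned down.
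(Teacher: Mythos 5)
Your overall strategy --- block-by-block verification of the reconstruction identity using the explicit formula for $E^{L_X}$ from Proposition \ref{prop:linking} together with the two expansions in Lemma \ref{lem:basis} --- is exactly what the paper means by ``routine computations,'' and your handling of the off-diagonal corners (including $v_j^*\cdot\widetilde{y}=\widetilde{y\cdot v_j}$ and the reconstruction of the $\widetilde{Y}$-corner from $y=\sum_j E^X(y\cdot v_j)\cdot v_j^*$) is correct. The gap sits precisely at the point you flag as delicate, and it cannot be ``organized'' away: with the doubly indexed family $w_{ij}=\begin{bmatrix} u_i & 0 \\ 0 & v_j\end{bmatrix}$, $i=1,\dots,n$, $j=1,\dots,m$, every corner of the sum is summed over \emph{both} indices, so for $T=\begin{bmatrix} c & x \\ \widetilde{y} & d\end{bmatrix}$ one gets
\[
\sum_{i=1}^{n}\sum_{j=1}^{m} w_{ij}\,E^{L_X}(w_{ij}^{*}\,T)
=\begin{bmatrix} m\,c & m\,x \\ n\,\widetilde{y} & n\,d\end{bmatrix},
\]
which is not $T$ when $n,m>1$, and likewise $\sum_{i,j}w_{ij}w_{ij}^{*}=\begin{bmatrix} m\,\Ind_W (E^A ) & 0 \\ 0 & n\,\Ind_W (E^B )\end{bmatrix}$. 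There is no way to make ``each diagonal corner use only its own index'' once you have committed to summing over the product index set; the identity you assert simply fails for that family.

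The displayed index formula pins down the intended reading, and your proof needs to adopt one of the two readings for which the computation closes. Either (i) pad the quasi-bases with zeros so that $n=m$ (exactly as the paper does at the start of Section \ref{sec:duality}) and use the \emph{singly} indexed diagonal family $\{(\begin{bmatrix} u_i & 0 \\ 0 & v_i \end{bmatrix},\begin{bmatrix} u_i & 0 \\ 0 & v_i \end{bmatrix}^{*})\}_{i=1}^{n}$, for which your corner-by-corner argument goes through verbatim and gives $\sum_i w_{ii}w_{ii}^{*}=\begin{bmatrix} \Ind_W (E^A ) & 0 \\ 0 & \Ind_W (E^B )\end{bmatrix}$; or (ii) use the disjoint union $\{(\begin{bmatrix} u_i & 0 \\ 0 & 0\end{bmatrix},\begin{bmatrix} u_i & 0 \\ 0 & 0\end{bmatrix}^{*})\}_{i=1}^{n}\cup\{(\begin{bmatrix} 0 & 0 \\ 0 & v_j\end{bmatrix},\begin{bmatrix} 0 & 0 \\ 0 & v_j\end{bmatrix}^{*})\}_{j=1}^{m}$, where the first family reconstructs the top row of $T$ (via the quasi-basis identity for $E^A$ and $y=\sum_i u_i\cdot E^X(u_i^*\cdot y)$) and the second reconstructs the bottom row, again yielding the stated block-diagonal index. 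Everything else in your proposal --- the appeal to Proposition \ref{prop:linking}, the role of the two expansions in Lemma \ref{lem:basis}, and the consistency check via Lemma \ref{lem:index} --- is sound once the index set is fixed in one of these two ways; as written, though, the central identity you set out to verify is false, so the argument does not yet constitute a proof.
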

\begin{proof}By Lemma \ref{lem:basis} and routine computations, we can see that
$$
\{( \begin{bmatrix} u _i & 0 \\
0 & v_j \end{bmatrix} \, , \, \begin{bmatrix} u _i & 0 \\
0 & v_j \end{bmatrix}^* ) \, | \, i=1,2,\dots, n, \, j=1,2,\dots, m \}
$$
is a quasi-basis for $E^{L_X}$. Hence by the definition of Watatani index, we can see that
$\Ind_W (E^{L_X})=\begin{bmatrix} \Ind_W (E^A ) & 0 \\
0 & \Ind_W (E^B ) \end{bmatrix}$.
\end{proof}

\section{The upward basic construction}\label{sec:upward}
Let $A\subset C$ and $B\subset D$ be unital inclusions of unital $C^*$-algebras,
which are strongly Morita equivalent with respect to a $C-D$-equivalence bimodule $Y$
and its closed subspace $X$. We suppose that there are conditional expectations $E^A$
and $E^B$ from $C$ and $D$ onto $A$ and $B$, which are of Watatani index-finite type,
respectively. Also, we suppose that there is a conditional expectation $E^X$ from
$Y$ onto $X$ with respect to $E^A$ and $E^B$.
Let $e_A$ and $e_B$ be the Jones projections for $E^A$ and $E^B$, respectively
and let $C_1$ and $D_1$ be the $C^*$-basic constructions for $E^A$ and $E^B$,
respectively. We regard $C$ and $D$ as a $C_1 -A$-equivalence bimodule and
a $D_1 -B$-equivalence bimodule in the same way as in Section \ref{sec:example}. Let
$$
Y_1 =C\otimes_A X\otimes_B \widetilde{D} ,
$$
where $\widetilde{D}$ is the dual equivalence bimodule of $D$,
a $B-D_1$-equivalence bimodule. Clearly $Y_1$ is a $C_1 -D_1$-equivalence
bimodule. Let $E^Y$ be the linear map from $Y_1$ to $Y$ defined by
$$
E^Y (c\otimes x\otimes \widetilde{d})=\Ind_W (E^A )^{-1}c\cdot x\cdot d^*
$$
for any $c\in C$, $d\in D$, $x\in X$. Then $E^Y$ is well-defined, clearly.
For any $y\in Y$,
$$
E^Y (\sum_{i=1}^n u_i \otimes E^X (u_i^* \cdot y)\otimes\widetilde{1})
=\sum_{i=1}^n \Ind_W (E^A )^{-1}u_i \cdot E^X (u_i^* \cdot y)
=\Ind_W (E^A )^{-1}\cdot y
$$
by Lemma \ref{lem:basis}. Hence $E^Y$ is surjective. Also, we note that
$$
E^Y (c\otimes x\otimes \widetilde{d}) =\Ind_W (E^A )^{-1}c\cdot x\cdot d^*
=c\cdot x\cdot d^* \Ind_W (E^B )^{-1}
$$
for any $c\in C$, $d\in D$, $x\in X$ by Lemma \ref{lem:index}.
Let $\phi$ be the linear map from $Y$ to $Y_1$ defined by
$$
\phi(y)=\sum_{i,j}u_i \otimes E^X(u_i^* \cdot y\cdot v_j )\otimes \widetilde{v_j}
$$
for any $y\in Y$.

\begin{lemma}\label{lem:bimodule}With the above notations, we have following conditions:
For any $c\in C$, $d\in D$, $y, z\in Y$,
\newline
$(1)$ $\phi(c\cdot y)=c\cdot \phi(y)$,
\newline
$(2)$ $\phi(y\cdot d)=\phi(y)\cdot d$,
\newline
$(3)$ ${}_{C_1} \la \phi(y), \, \phi(z) \ra ={}_C \la y, z \ra$,
\newline
$(4)$ $\la \phi(y), \, \phi(z) \ra_{D_1} =\la y, z \ra_D$.
\end{lemma}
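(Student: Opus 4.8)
The plan is to verify the four identities separately, exploiting the quasi-bases $\{(u_i,u_i^*)\}_{i=1}^n$ and $\{(v_j,v_j^*)\}_{j=1}^m$ for $E^A$ and $E^B$, the defining properties of $E^X$ from Definition \ref{def:expectation}, the reconstruction formulas of Lemma \ref{lem:basis}, and the standard relations in the $C^*$-basic construction, namely $E^A(c)e_A=e_A c e_A$ and $\sum_i u_i e_A u_i^*=1_{C_1}$ (together with their $e_B$-analogues), as well as the bimodule inner products ${}_{C_1}\la c,c'\ra=ce_A c'^*$, $\la c,c'\ra_A=E^A(c^* c')$ on $C$ and ${}_B\la\widetilde{s},\widetilde{t}\ra=\la s,t\ra_B$, $\la\widetilde{s},\widetilde{t}\ra_{D_1}=se_B t^*$ on $\widetilde{D}$.

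For $(1)$ and $(2)$ I would argue by the same mechanism. For $(1)$, expand $\phi(c\cdot y)=\sum_{i,j}u_i\otimes E^X(u_i^* c\cdot y\cdot v_j)\otimes\widetilde{v_j}$, rewrite $u_i^* c=\sum_k E^A(u_i^* c u_k)u_k^*$ by the quasi-basis, pull the coefficient $E^A(u_i^* c u_k)\in A$ out of $E^X$ using Definition \ref{def:expectation}$(2)$, slide it across the left tensor leg of $C\otimes_A X$, and resum over $i$ via $\sum_i u_i E^A(u_i^* c u_k)=c u_k$; this yields $c\cdot\phi(y)$. Condition $(2)$ is the mirror computation on the right: write $d v_j=\sum_\ell v_\ell E^B(v_\ell^* d v_j)$, move the resulting $B$-valued coefficient through $E^X$ by Definition \ref{def:expectation}$(5)$ and across the tensor leg into $\widetilde{D}$, and resum using $\sum_j v_j E^B(v_j^* d^* v_\ell)=d^* v_\ell$ and the dual-module identity $\widetilde{d^* v_\ell}=\widetilde{v_\ell}\cdot d$.

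The heart of the argument, and the step I expect to be the main obstacle, is the two inner-product identities $(3)$ and $(4)$, which demand careful bookkeeping of nested quasi-basis sums. For $(3)$ I would first expand ${}_{C_1}\la\phi(y),\phi(z)\ra$ through the tensor-product inner products; the $\{v_j\}$-sum collapses by $\sum_j v_j E^B(v_j^* v_l)=v_l$ (with Definition \ref{def:expectation}$(5)$), leaving $\sum_{i,k,l}u_i\,{}_A\la E^X(u_i^* y v_l),E^X(u_k^* z v_l)\ra\,e_A\,u_k^*$. Converting ${}_A\la E^X(\cdot),\cdot\ra$ into $E^A({}_C\la\cdot,\cdot\ra)$ via Definition \ref{def:expectation}$(3)$ and then applying $E^A(c)e_A=e_A c e_A$ followed by $\sum_i u_i e_A u_i^*=1$ collapses the $i$-sum to $\sum_{k,l}{}_C\la y\cdot v_l,E^X(u_k^* z v_l)\ra\,e_A\,u_k^*$. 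Finally, the balancedness ${}_C\la\mu\cdot d,\nu\ra={}_C\la\mu,\nu\cdot d^*\ra$ together with $u_k^*\cdot z=\sum_l E^X(u_k^* z v_l)\cdot v_l^*$ from Lemma \ref{lem:basis} contracts the $l$-sum to ${}_C\la y,z\ra u_k$, and a last application of $\sum_k u_k e_A u_k^*=1$ produces ${}_C\la y,z\ra$. Condition $(4)$ is proven by the completely symmetric computation, now collapsing the spectator $\{u_i\}$-sum via $\sum_i u_i E^A(u_i^* u_k)=u_k$ and Definition \ref{def:expectation}$(2)$, converting $\la E^X(\cdot),\cdot\ra_B$ into $E^B(\la\cdot,\cdot\ra_D)$ by Definition \ref{def:expectation}$(6)$, and using $E^B(d)e_B=e_B d e_B$ and $\sum_j v_j e_B v_j^*=1$ twice, with Lemma \ref{lem:basis} contracting the final sum to $\la y,z\ra_D$. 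The main risk throughout is keeping the many index sums and the dual-module conventions for $\widetilde{D}$ straight; once the collapsing identities are applied in the right order the verification is mechanical.
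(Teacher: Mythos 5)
Your proposal is correct and takes essentially the same route as the paper's proof: the paper establishes $(1)$ by exactly the quasi-basis expansion $u_i^*c=\sum_k E^A(u_i^*cu_k)u_k^*$ and resummation you describe (with $(2)$ left as the mirror case), and proves $(3)$ by the same sequence of collapses --- first the $\{v_j\}$-sum via $\sum_j v_jE^B(v_j^*v_l)=v_l$, then conversion to $E^A({}_C\la\cdot,\cdot\ra)$ by Definition \ref{def:expectation}$(3)$, the quasi-basis collapse of the $i$-sum, and finally Lemma \ref{lem:basis} to contract the $l$-sum, with $(4)$ symmetric. The only cosmetic difference is that the paper collapses the $i$-sum directly by $\sum_iu_iE^A(u_i^*c)=c$ where you route it through $E^A(c)e_A=e_Ace_A$ and $\sum_iu_ie_Au_i^*=1$, which is equivalent.
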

\begin{proof}
Let $c\in C$, $d\in D$, $y, z\in Y$. Then
\begin{align*}
\phi(c\cdot y) & =\sum_{i, j}u_i \otimes E^X (u_i^* c\cdot y\cdot v_j )\otimes \widetilde{v_j }
=\sum_{i, j, k}u_i \otimes E^X (E^A (u_i^* cu_k )u_k^* \cdot y \cdot v_j )\otimes \widetilde{v_j } \\
& =\sum_{i, j, k}u_i E^A (u_i^* cu_k )\otimes E^X (u_k^* \cdot y\cdot v_j )\otimes\widetilde{v_j }
=\sum_{j, k}cu_k \otimes E^X (u_k^* \cdot y\cdot v_j )\otimes\widetilde{v_j} \\
& =c\cdot \phi(y) .
\end{align*}
Hence we obtain Condition (1). In the similar way to the above, we can obtain Condition (2).
Next we show Conditions (3) and (4).
\begin{align*}
{}_{C_1} \la \phi(y), \, \phi(z) \ra & =\sum_{i, j, k, l}{}_{C_1} 
\la u_i \otimes E^X (u_i^* \cdot y\cdot v_j )\otimes \widetilde{v_j}, \,
u_k \otimes E^X (u_k^* \cdot z\cdot v_l )\otimes\widetilde{v_l} \ra \\
& =\sum_{i, j, k, l}{}_{C_1} \la u_i {}_A \la E^X (u_i^* \cdot y\cdot v_j )\otimes\widetilde{v_j}, \,
E^X (u_k^* \cdot z\cdot v_l )\otimes\widetilde{v_l} \ra , \, u_k \ra \\
& =\sum_{i, j, k, l}u_i {}_A \la E^X (u_i^* \cdot y\cdot v_j )\otimes \widetilde{v_j}, \, 
E^X (u_k^* \cdot z\cdot v_l )\otimes\widetilde{v_l} \ra e_A u_k^* \\
& =\sum_{i, j, k, l}u_i {}_A \la E^X (u_i^* \cdot y\cdot v_j )\cdot \la v_j , \, v_l \ra_B , \,
E^X (u_k^* \cdot z\cdot v_l ) \ra e_A u_k^* \\
& =\sum_{i, j, k, l}u_i {}_A \la E^X (u_i^* \cdot y\cdot v_j )\cdot E^B (v_j^* v_l ), \, E^X (u_k^* \cdot z\cdot v_l ) \ra 
e_A u_k^* \\
& =\sum_{i, j, k, l}u_i {}_A \la E^X (u_i^* \cdot y\cdot v_j E^B (v_j^* v_l )), \, E^X (u_k^* \cdot z\cdot v_l ) \ra e_A u_k^* \\
& =\sum_{i, k, l}u_i {}_A \la E^X (u_i^* \cdot y\cdot v_l ), \, E^X (u_k^* \cdot z\cdot v_l ) \ra e_A u_k^* \\
& =\sum_{i, k, l}u_i E^A ({}_C \la u_i^* \cdot y\cdot v_l , \, E^X (u_k^* \cdot z \cdot v_l ) \ra )e_A u_k^* \\
& =\sum_{i, k, l}u_i  E^A (u_i^* {}_C \la y\cdot v_l , \, E^X (u_k^* \cdot z \cdot v_l ) \ra )e_A u_k^* \\
& =\sum_{k, l} {}_C \la y\cdot v_l , \, E^X (u_k^* \cdot z\cdot v_l  ) \ra e_A u_k^* \\
& =\sum_{k, l}{}_C \la y, \, E^X (u_k^* \cdot z\cdot v_l )\cdot v_l^* \ra e_A u_k^* \\
&=\sum_k {}_C \la y, u_k^* \cdot z \ra e_A u_k^* \\
& =\sum_k {}_C \la y, z \ra u_k e_A u_k^* \\
& ={}_C \la y, z \ra .
\end{align*}
Hence we obtain Condition (3). Similarly we obtain Condition (4).
\end{proof}

By the above lemma, we can identify $Y$ with a closed subspace of
$Y_1$ satisfying Conditions (1), (2) in Definition \ref{def:inclusion} except the
conditions that ${}_C \la Y_1 , Y \ra=C$ and $\la Y_1 , Y \ra_D =D$.

\begin{lemma}\label{lem:condition2}With the above, we identify $Y$ with a closed subspace
of $Y_1$ by the linear map $\phi$. Then ${}_{C_1} \la Y_1 , Y \ra=C_1$ and $\la Y_1 , Y \ra_{D_1} =D_1$.
\end{lemma}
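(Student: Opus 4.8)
The plan is to derive both equalities from the single fact that the Jones projections $e_A$ and $e_B$ belong to the relevant inner-product spaces, combined with the Watatani identities $C_1=Ce_A C$ and $D_1=De_B D$ for the $C^*$-basic constructions (valid since $E^A,E^B$ are of Watatani index-finite type). First I would note that, writing $Y$ for $\phi(Y)$, the set ${}_{C_1}\la Y_1,Y\ra$ is a left $C_1$-submodule and a right $C$-submodule of $C_1$: for $c\in C_1$ we have $c\cdot{}_{C_1}\la\xi,\phi(y)\ra={}_{C_1}\la c\cdot\xi,\phi(y)\ra$ with $c\cdot\xi\in Y_1$, while for $c\in C$, ${}_{C_1}\la\xi,\phi(y)\ra\cdot c={}_{C_1}\la\xi,\phi(c^*\cdot y)\ra$ by Lemma \ref{lem:bimodule}$(1)$. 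Hence as soon as $e_A\in{}_{C_1}\la Y_1,Y\ra$ we obtain $C_1=C_1 e_A C\subseteq{}_{C_1}\la Y_1,Y\ra\subseteq C_1$ (using $1\in C\subseteq C_1$), forcing ${}_{C_1}\la Y_1,Y\ra=C_1$; the mirror-image argument reduces $\la Y_1,Y\ra_{D_1}=D_1$ to showing $e_B\in\la Y_1,Y\ra_{D_1}$.

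To produce $e_A$ I would evaluate ${}_{C_1}\la 1\otimes x\otimes\widetilde{1},\,\phi(x_0)\ra$ for $x,x_0\in X$. Unwinding the composite left inner product on $Y_1=C\otimes_A X\otimes_B\widetilde{D}$ (using ${}_B\la\widetilde{d_1},\widetilde{d_2}\ra=E^B(d_1^* d_2)$ and ${}_{C_1}\la c_1,c_2\ra=c_1 e_A c_2^*$) gives $\sum_{k,l}{}_A\la x\cdot E^B(v_l),\,E^X(u_k^*\cdot x_0\cdot v_l)\ra\,e_A u_k^*$. Rewriting ${}_A\la p,E^X(w)\ra=E^A({}_C\la p,w\ra)$ (the adjoint of Definition \ref{def:expectation}$(3)$) and summing over $l$ by the quasi-basis relation $\sum_l v_l E^B(v_l^* d)=d$ collapses this to $\sum_k E^A({}_C\la x,u_k^*\cdot x_0\ra)\,e_A u_k^*={}_A\la x,x_0\ra\sum_k E^A(u_k)e_A u_k^*$, where I also use $E^X(u_k^*\cdot x_0)=E^A(u_k^*)\cdot x_0$ from Definition \ref{def:expectation}$(1)$. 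Finally $E^A(u_k)e_A=e_A u_k e_A$ together with the standard basic-construction identity $\sum_k u_k e_A u_k^*=1$ yields ${}_{C_1}\la 1\otimes x\otimes\widetilde{1},\,\phi(x_0)\ra={}_A\la x,x_0\ra\,e_A$.

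Since $X$ is full as a left $A$-module and $A$ is unital, $1_A$ is a finite sum $\sum_i{}_A\la x_i,x_{0,i}\ra$, so $e_A=\sum_i{}_{C_1}\la 1\otimes x_i\otimes\widetilde{1},\,\phi(x_{0,i})\ra\in{}_{C_1}\la Y_1,Y\ra$, and the reduction of the first paragraph finishes the first equality. For $\la Y_1,Y\ra_{D_1}=D_1$ I would run the mirror computation: expanding $\la 1\otimes x\otimes\widetilde{1},\,\phi(y)\ra_{D_1}$, collapse the $k$-sum through $\sum_k E^A(c\,u_k)u_k^*=c$ and Definition \ref{def:expectation}$(2),(6)$, and the $l$-sum through $\sum_l v_l e_B v_l^*=1$, arriving at $e_B\cdot\la x,y\ra_D$; fullness of $X$ as a right $B$-module (equivalently $\overline{\la Y,X\ra_D}=D$) then produces $e_B$. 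The main difficulty is purely bookkeeping: carefully unwinding the nested inner product on the triple tensor product and recognizing, at each stage, which of the two quasi-bases telescopes, so that one quasi-basis relation absorbs one outer factor while the $e_A$- (resp. $e_B$-) identity collapses the other outer factor to the Jones projection, the middle $X$-factor surviving as an $A$- (resp. $B$-) valued inner product that fullness of $X$ reduces to a scalar.
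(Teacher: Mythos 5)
Your argument is correct and is essentially the paper's own: the paper likewise unwinds ${}_{C_1}\la c\otimes x\otimes\widetilde{d},\,\phi(y)\ra$ on the triple tensor product, telescopes the quasi-bases to get $ce_A\,{}_C\la x,\,y\cdot d\ra$, and concludes from ${}_C\la X,Y\ra=C$ and $C_1=Ce_AC$ (with the mirror computation for $D_1$). Your only deviation is organizational — specializing to $c=d=1$ to extract $e_A$ and then bootstrapping via the $C_1$--$C$ module structure of ${}_{C_1}\la Y_1,Y\ra$ — which is sound (the stray parenthetical equating fullness of $X$ over $B$ with $\overline{\la Y,X\ra_D}=D$ is inaccurate but unused).
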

\begin{proof}Let $c\otimes x\otimes\widetilde{d}\in Y_1$ and $y\in Y$. Since
$\phi(y)=\sum_{i, j}u_i \otimes E^X (u_i^* \cdot y\cdot v_j )\otimes\widetilde{v_j}$,
\begin{align*}
{}_{C_1 }\la c\otimes x\otimes\widetilde{d}, \phi(y) \ra & =\sum_{i, j}{}_{C_1} \la c\otimes x\otimes \widetilde{d} , \, u_i
\otimes E^X (u_i^* \cdot y\cdot v_j )\otimes\widetilde{v_j} \ra \\
& =\sum_{i, j}{}_{C_1} \la c\cdot {}_A \la x\otimes\widetilde{d} \, ,\, E^X
(u_i^* \cdot y\cdot v_j )\otimes\widetilde{v_j } \ra , \, u_i \ra \\
& =\sum_{i, j}{}_{C_1} \la c\cdot {}_A \la x\cdot E^B (d^* v_j ), \, E^X (u_i^* \cdot y\cdot v_j ) \ra , \, u_i \ra \\
& =\sum_{i, j}c{}_A \la x\cdot E^B (d^* v_j ), \, E^X (u_i^* \cdot y\cdot v_j ) \ra e_A u_i^* \\
& =\sum_{i, j}ce_A \, {}_A \la x\cdot E^B (d^* v_j ), \, E^X (u_i^* \cdot y\cdot v_j ) \ra u_i^* \\
& =\sum_{i, j}ce_A \, {}_C \la x \cdot E^B (d^* v_j ), \, u_i \cdot E(u_i^* \cdot y\cdot v_j ) \ra \\
& =\sum_j ce_A \, {}_C \la x\cdot E^B (d^* v_j ), \, y\cdot v_j \ra \\
& =\sum_j ce_A \, {}_C \la x\cdot E^B (d^* v_j )v_j^* ,, y \ra \\
& =ce_A \, {}_C \la x\cdot d^* , \, y \ra =ce_A \, {}_C \la x, y\cdot d \ra .
\end{align*}
Since ${}_C \la X, Y\ra =C$, we obtain that ${}_{C_1} \la Y_1 , Y \ra=C_1$. Also,
since $\la X, Y \ra_D =D$, we obtain that $\la Y_1 , Y \ra_{D_1}=D_1$ in the same way as above.
\end{proof}

By Lemmas \ref{lem:bimodule} and \ref{lem:condition2}, we obtain the following corollary:

\begin{cor}\label{cor:dual}With the above notations, the inclusions $C\subset C_1$
and $D\subset D_1$ are strongly Morita equivalent with respect to
the $C_1 -D_1$-equivalence bimodule $Y_1$ and its closed subspace
$Y$.
\end{cor}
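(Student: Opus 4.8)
The plan is to verify, clause by clause, the conditions of Definition \ref{def:inclusion} for the pair of inclusions $C\subset C_1$ and $D\subset D_1$ together with the $C_1-D_1$-equivalence bimodule $Y_1 = C\otimes_A X\otimes_B\widetilde{D}$ and the subspace $\phi(Y)$. First I would record that $\phi$ is an isometric embedding: by Lemma \ref{lem:bimodule} (3) and (4) it preserves both the left $C_1$-valued and the right $D_1$-valued inner products, so in particular it is injective and $\phi(Y)$ is a closed subspace of $Y_1$. From now on I identify $Y$ with $\phi(Y)$. Since $Y_1$ is already a $C_1-D_1$-equivalence bimodule, nothing remains but to check that $Y$ sits inside it as prescribed by Definition \ref{def:inclusion}.

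Next I would check the module and inner-product conditions. By Lemma \ref{lem:bimodule} (1) and (2), $c\cdot\phi(y)=\phi(c\cdot y)\in\phi(Y)$ and $\phi(y)\cdot d=\phi(y\cdot d)\in\phi(Y)$ for all $c\in C$, $d\in D$, $y\in Y$, which gives $C\cdot Y\subset Y$ and $Y\cdot D\subset Y$. By (3) and (4), ${}_{C_1}\la\phi(y),\phi(z)\ra={}_C\la y,z\ra\in C$ and $\la\phi(y),\phi(z)\ra_{D_1}=\la y,z\ra_D\in D$ for $y,z\in Y$. Because $Y$ is a $C-D$-equivalence bimodule and the inclusions are unital, we have ${}_C\la Y,Y\ra=C$ and $\la Y,Y\ra_D=D$ with no closure needed, by Remark \ref{remark:closure} (2); hence ${}_{C_1}\la Y,Y\ra=C$ and $\la Y,Y\ra_{D_1}=D$. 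The two remaining fullness conditions, ${}_{C_1}\la Y_1,Y\ra=C_1$ and $\la Y_1,Y\ra_{D_1}=D_1$, are precisely the content of Lemma \ref{lem:condition2}.

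With all the clauses of Definition \ref{def:inclusion} in hand, I would conclude that $C\subset C_1$ and $D\subset D_1$ are strongly Morita equivalent with respect to $Y_1$ and its closed subspace $Y$. I expect the corollary itself to be essentially bookkeeping: the substantive work has already been carried out in the two cited lemmas. Indeed the one point that genuinely needs care is upstream, in Lemma \ref{lem:condition2}, where the fullness ${}_{C_1}\la Y_1,Y\ra=C_1$ is extracted from the identity ${}_C\la X,Y\ra=C$ together with the quasi-basis and Jones-projection computations; matching that output cleanly against the left-module clause of Definition \ref{def:inclusion} is the only place where I would slow down, to make sure that between Lemmas \ref{lem:bimodule} and \ref{lem:condition2} no clause of the definition is left unverified.
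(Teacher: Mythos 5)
Your proposal is correct and follows exactly the route of the paper, which derives the corollary directly from Lemmas \ref{lem:bimodule} and \ref{lem:condition2}: the former supplies the module and inner-product conditions of Definition \ref{def:inclusion} for $\phi(Y)\subset Y_1$, and the latter supplies the fullness conditions ${}_{C_1}\la Y_1, Y\ra=C_1$ and $\la Y_1, Y\ra_{D_1}=D_1$. Your clause-by-clause bookkeeping is a faithful (and slightly more explicit) version of the paper's one-line justification.
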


Let $E^C$ and $E^D$ be the dual conditional expectations of $E^A$ and
$E^B$, respectively.

\begin{lemma}\label{lem:EY}With the above notations, $E^Y$ is a conditional
expectation from $Y_1$ onto $Y$ with respect to $E^C$ and $E^D$.
\end{lemma}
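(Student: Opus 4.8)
The plan is to check that the triple $(E^C, E^D, E^Y)$ satisfies Conditions (1)--(6) of Definition \ref{def:expectation}, identifying $Y$ throughout with its image $\phi(Y)\subset Y_1$ and transporting inner products via Lemma \ref{lem:bimodule}, so that ${}_{C_1}\la\phi(y),\phi(z)\ra={}_C\la y,z\ra$ and $\la\phi(y),\phi(z)\ra_{D_1}=\la y,z\ra_D$. The two facts I would fix at the outset are the form of the dual conditional expectation, $E^C(c\,e_A\,c')=\Ind_W(E^A)^{-1}cc'$ for $c,c'\in C$ and the analogue for $E^D$ (as already used in Section \ref{sec:example}), together with the centrality of $\Ind_W(E^A)$ in $C$ and the bimodule property of $E^C$ and $E^D$ over $C$ and $D$.

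First I would dispose of the covariance Conditions (2) and (5) by a direct computation on an elementary tensor $c\otimes x\otimes\widetilde d$. Because $C\subset C_1$ and $D\subset D_1$, the outer actions reduce to multiplication, $c'\cdot(c\otimes x\otimes\widetilde d)=(c'c)\otimes x\otimes\widetilde d$ for $c'\in C$ and $(c\otimes x\otimes\widetilde d)\cdot d'=c\otimes x\otimes\widetilde{d'^*d}$ for $d'\in D$; applying the defining formula of $E^Y$ and using that $\Ind_W(E^A)$ is central gives $E^Y(c'\cdot y_1)=c'\cdot E^Y(y_1)$ and $E^Y(y_1\cdot d')=E^Y(y_1)\cdot d'$ immediately.

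Next I would verify the inner-product Conditions (3) and (6). For (3), the computation already carried out in the proof of Lemma \ref{lem:condition2} gives ${}_{C_1}\la c\otimes x\otimes\widetilde d,\phi(y)\ra=c\,e_A\,{}_C\la x\cdot d^*,y\ra$, and then applying $E^C$ yields
\[
E^C\big({}_{C_1}\la c\otimes x\otimes\widetilde d,\phi(y)\ra\big)=\Ind_W(E^A)^{-1}c\cdot{}_C\la x\cdot d^*,y\ra={}_C\la\Ind_W(E^A)^{-1}c\cdot x\cdot d^*,y\ra={}_C\la E^Y(c\otimes x\otimes\widetilde d),y\ra,
\]
which is Condition (3). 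Condition (6) is the mirror statement on the $\widetilde D$-side: I would record the right-inner-product formula $\la c\otimes x\otimes\widetilde d,\phi(y)\ra_{D_1}$ exactly as indicated at the end of the proof of Lemma \ref{lem:condition2}, apply $E^D$ together with its dual-expectation formula, and match $\la E^Y(c\otimes x\otimes\widetilde d),y\ra_{D_1}$.

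Finally, rather than computing Conditions (1) and (4) directly, I would deduce them from (3) and (6) using fullness. For (1), pairing against an arbitrary $y'\in Y$ and using Condition (3) and then Lemma \ref{lem:bimodule} gives
\[
{}_C\la E^Y(\gamma\cdot\phi(y)),y'\ra=E^C\big(\gamma\cdot{}_{C_1}\la\phi(y),\phi(y')\ra\big)=E^C\big(\gamma\cdot{}_C\la y,y'\ra\big)=E^C(\gamma)\cdot{}_C\la y,y'\ra={}_C\la E^C(\gamma)\cdot y,y'\ra
\]
for $\gamma\in C_1$, where the third equality is right $C$-linearity of $E^C$; since the left inner product on $Y$ is full this forces $E^Y(\gamma\cdot\phi(y))=E^C(\gamma)\cdot y$, and Condition (4) follows symmetrically from (6) and right $D$-linearity of $E^D$. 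The main obstacle I expect is the bookkeeping in Condition (6): computing the $D_1$-valued inner product correctly through the dual bimodule $\widetilde D$ and reconciling the index factors, which on the right-hand side are produced by $E^D$ and migrate across $Y$ via Lemma \ref{lem:index}.
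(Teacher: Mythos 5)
Your proposal is correct, and for four of the six conditions it coincides with the paper's own proof: Conditions (2) and (5) by direct computation on elementary tensors using the centrality of $\Ind_W (E^A )$, and Conditions (3) and (6) by feeding the inner-product formula ${}_{C_1} \la c\otimes x\otimes\widetilde{d}, \, \phi(y) \ra =ce_A \, {}_C \la x\cdot d^* , \, y \ra$ from the proof of Lemma \ref{lem:condition2} into $E^C$ and $E^D$. Where you genuinely diverge is in Conditions (1) and (4): the paper computes $E^Y (c_1 e_A c_2 \cdot y)$ head-on by expanding $y=\phi(y)=\sum_{i,j}u_i \otimes E^X (u_i^* \cdot y\cdot v_j )\otimes\widetilde{v_j}$ and collapsing the sum with the quasi-basis identities of Lemma \ref{lem:basis}, whereas you derive (1) from the already-established Condition (3) by pairing $E^Y (\gamma\cdot\phi(y))-E^C (\gamma)\cdot y$ against arbitrary $y' \in Y$, using right $C$-linearity of $E^C$, Lemma \ref{lem:bimodule}, and non-degeneracy of the $C$-valued inner product on $Y$ (taking $y'$ equal to the difference itself already forces it to vanish), and symmetrically for (4). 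This is legitimate: it requires only that (3) and (6) hold for all of $Y_1$, which follows from your elementary-tensor computation by linearity and continuity, and it spares you a second round of quasi-basis bookkeeping; the paper's direct computation, on the other hand, makes no appeal to non-degeneracy and exhibits the identity $E^Y (c_1 e_A c_2 \cdot y)=\Ind_W (E^A )^{-1}c_1 c_2 \cdot y$ explicitly, which is reused later (e.g.\ in Lemma \ref{lem:commute}). Your closing worry about the index factors in Condition (6) is exactly where Lemma \ref{lem:index} enters in the paper as well, so no gap there.
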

\begin{proof}
We show that Conditions (1)-(6) in Definition \ref{def:expectation} hold.
We note that we identify $Y$ with $\phi(Y)\subset Y_1$.
\newline
(1) For any $c_1, c_2\in C$, $y\in Y$,
\begin{align*}
E^Y (c_1 e_A c_2 \cdot y) & =\sum_{i, j}E^Y (c_1 e_A c_2 \cdot u_i \otimes E^X (u_i^* \cdot y\cdot v_j )\otimes \widetilde{v_j}) \\
& =\sum_{i, j}E^Y (c_1 E^A (c_2 u_i )\otimes E^X (u_i^* \cdot y\cdot v_j )\otimes\widetilde{v_j}) \\
& =\sum_{i, j}\Ind_W (E^A )^{-1}c_1 E^A (c_2 u_i )\cdot E^X (u_i^* \cdot y\cdot v_j )\cdot v_j^* \\
& =\Ind_W (E^A )^{-1}c_1 c_2 \cdot y =E^C (c_1 e_A c_2 )\cdot y .
\end{align*}
(2) For any $c_1 , c_2 \in C$, $x\in X$, $d\in D$,
\begin{align*}
E^Y (c_1 \cdot c_2 \otimes x\otimes \widetilde{d}) & =E^Y (c_1 c_2 \otimes x\otimes\widetilde{d})
=\Ind_W (E^A )^{-1}c_1 c_2 \cdot x\cdot d^* \\
& =c_1 \cdot E^Y (c_2 \otimes x\otimes \widetilde{d} ) .
\end{align*}
(3) By the proof of Lemma \ref{lem:condition2}, for any $c\in C$, $d\in D$, $x\in X$, $y\in Y$,
\begin{align*}
E^C ({}_{C_1} \la c\otimes x\otimes \widetilde{d}, \, y \ra ) 
& =\Ind_W (E^A )^{-1} c\, \, {}_C \la x\cdot d^* , \, y \ra \\
& =\Ind_W (E^A )^{-1}{}_C \la c\cdot x\cdot d^* , \, y \ra
= {}_{C_1}\la E^Y (c\otimes x\otimes\widetilde{d}), \, y \ra .
\end{align*}
(4) By Lemma \ref{lem:index}, we can see that
$$
E^Y (y\cdot d_1 e_B d_2 )=y\cdot E^D (d_1 e_B d_2 )
$$
for any $d_1 , d_2 \in D$, $y\in Y$ in the same way as in the proof of
Condition (1).
\newline
(5) In the same way as in the proof of Condition (2), we can see that
$$
E^Y (c\otimes x\otimes \widetilde{d_1}\cdot d_2 )=E^Y (c\otimes x\otimes \widetilde{d_1})\cdot d_2
$$
for any $c\in C$, $d_1 , d_2 \in D$, $x\in X$.
\newline
(6) By Lemma \ref{lem:index} we can see that
$$
E^B (\la c\otimes x\otimes \widetilde{d}, y \ra_{D_1}= \la E^Y c\otimes x\otimes \widetilde{d}), \, y \ra_{D_1} .
$$
for any $c\in C$, $d\in D$, $x\in X$, $y\in Y$.
Therefore we obtain the conclusion.
\end{proof}

\begin{Def}\label{def:dual}In the above situation, $Y_1$ is called the
\sl
upward basic construction
\rm
of $Y$ for $E^X$. Also, $E^Y$ is called the
\sl
dual
\rm
conditional expectation of $E^X$.
\end{Def}

\begin{remark}\label{remark:another}The linear map $\phi$ from $Y$ to $Y_1$ defined in the above
is independent of the choice of quasi-bases $\{(u_i , u_i^* )\}$ and $\{(v_j , v_j^* )\}$ for
$E^A$ and $E^B$, respectively. Indeed, let $\{(w_i , w_i^* )\}$ and $\{(z_j , z_j^* )\}$ be another pair of
quasi-bases for $E^A$ and $E^B$, respectively. Then for any $y\in Y$,
\begin{align*}
\sum_{i, j}w_i \otimes E^X (w_i^* \cdot y\cdot z_j )\otimes\widetilde{z_j} & =\sum_{i, j, k, l}
u_k E^A (u_k^* w_i )\otimes E^X (w_i^* \cdot y\cdot z_j )\otimes [v_l E^B (v_l^* z_j )]^{\widetilde{}} \\
& =\sum_{i, j, k, l}u_k \otimes E^X (E^A (u_k^* w_i )w_i^* \cdot y\cdot z_j )\otimes E^B (z_j^* v_l )\cdot\widetilde{v_l} \\
& =\sum_{j, k, l}u_k \otimes E^X (u_k^* \cdot y\cdot z_j E^B (z_j^* v_l ))\otimes\widetilde{v_l} \\
& =\sum_{k. l}u_k \otimes E^X (u_k^* \cdot y\cdot v_l )\otimes\widetilde{v_l} =\phi(y) .
\end{align*}
\end{remark}

Next, we shall show that the upward basic construction for equivalence bimodules is unique
in a certain sense.

Let $A\subset C$ and $B\subset D$ be unital inclusions of unital
$C^*$-algebras as above. Also, let $E^A , E^B , E^X$ and $C_1, D_1$ be as above.

\begin{lemma}\label{lem:index2}With the above notations, $\Ind_W (E^A )\in A$ if
and only if $\Ind_W (E^B )\in B$.
\end{lemma}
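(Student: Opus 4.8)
The plan is to deduce the whole equivalence from the index identity of Lemma~\ref{lem:index}, which reads $\Ind_W(E^A)\cdot y = y\cdot\Ind_W(E^B)$ for every $y\in Y$. The essential feature of this identity is that it interchanges the left action of $\Ind_W(E^A)$ with the right action of $\Ind_W(E^B)$ on $Y$; combined with the fullness conditions of Definition~\ref{def:inclusion} and the fact recorded there that the $C$- and $D$-valued inner products restrict on $X$ to the $A$- and $B$-valued ones, this will let me transport membership in a subalgebra from one index to the other. Throughout I use that both indices are central, so that $\Ind_W(E^A)\cdot x$ and $x\cdot\Ind_W(E^B)$ are defined, and the quasi-basis $x_1,\dots,x_n\in X$ from the start of Section~\ref{sec:definition}, for which $\sum_{i=1}^n\la x_i,x_i\ra_B=\sum_{i=1}^n\la x_i,x_i\ra_D=1$.

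First I would assume $\Ind_W(E^A)\in A$ and prove $\Ind_W(E^B)\in B$. For each $i$, since $\Ind_W(E^A)\in A$, Definition~\ref{def:inclusion}(1) gives $\Ind_W(E^A)\cdot x_i\in X$, and by Lemma~\ref{lem:index} this element equals $x_i\cdot\Ind_W(E^B)$; hence $x_i\cdot\Ind_W(E^B)\in X$. Because both $x_i$ and $x_i\cdot\Ind_W(E^B)$ lie in $X$, Definition~\ref{def:inclusion}(2) shows $\la x_i,\,x_i\cdot\Ind_W(E^B)\ra_D\in B$. Summing over $i$ and using $\sum_i\la x_i,x_i\ra_D=1$ together with $\la x_i,\,x_i\cdot\Ind_W(E^B)\ra_D=\la x_i,x_i\ra_D\,\Ind_W(E^B)$ then yields $\Ind_W(E^B)=\sum_{i=1}^n\la x_i,\,x_i\cdot\Ind_W(E^B)\ra_D\in B$, as wanted.

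The converse runs symmetrically through the left Hilbert module structure. Assuming $\Ind_W(E^B)\in B$, Definition~\ref{def:inclusion}(2) gives $x\cdot\Ind_W(E^B)\in X$ for $x\in X$, so $\Ind_W(E^A)\cdot x\in X$ by Lemma~\ref{lem:index}. Since $A\subset C$ and $B\subset D$ are unital, no closure is needed in Definition~\ref{def:inclusion} (Remark~\ref{remark:closure}(2)), so ${}_A\la X,X\ra=A$ and I may write $1_A=\sum_k{}_A\la p_k,q_k\ra$ with $p_k,q_k\in X$. Then, using ${}_C\la\Ind_W(E^A)\cdot p_k,q_k\ra=\Ind_W(E^A)\,{}_A\la p_k,q_k\ra$ and that $\Ind_W(E^A)\cdot p_k\in X$, each summand lies in $A$ by Definition~\ref{def:inclusion}(1), giving $\Ind_W(E^A)=\sum_k{}_C\la\Ind_W(E^A)\cdot p_k,q_k\ra\in A$. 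I expect no genuine obstacle here: the only thing demanding care is the bookkeeping of which inner product is in play at each step and invoking their coincidence on $X$, so that the index identity can legitimately be fed into the fullness relations.
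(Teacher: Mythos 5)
Your proof is correct, but it follows a genuinely different route from the paper's. The paper's proof invokes the normalization from Section~\ref{sec:definition}, assuming $B=pM_k(A)p$, $D=pM_k(C)p$ and $E^B=(E^A\otimes\id_{M_k(\BC)})|_{pM_k(C)p}$, and then reads off the explicit formula $\Ind_W(E^B)=(\Ind_W(E^A)\otimes I_k)p$ computed before Lemma~\ref{lem:expectation}; membership in $B=pM_k(A)p$ is then immediate, and only the forward implication is written out (the converse being left to symmetry). You instead work intrinsically on the bimodule: you feed the intertwining identity $\Ind_W(E^A)\cdot y=y\cdot\Ind_W(E^B)$ of Lemma~\ref{lem:index} into the invariance conditions (1),(2) of Definition~\ref{def:inclusion} and the fullness of $X$ (no closures needed, by Remark~\ref{remark:closure}(2)), recovering $\Ind_W(E^B)=\sum_i\la x_i,\,x_i\cdot\Ind_W(E^B)\ra_D$ and $\Ind_W(E^A)=\sum_k{}_C\la\Ind_W(E^A)\cdot p_k,\,q_k\ra$ as finite sums of inner products that visibly land in $B$ and $A$ respectively. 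There is no circularity, since Lemma~\ref{lem:index} is established in Section~\ref{sec:linking} independently of this statement and its hypotheses are in force here. What your argument buys is a coordinate-free proof that treats both implications symmetrically and makes transparent exactly which structural facts are used; what the paper's approach buys is an explicit formula relating the two indices, which it reuses elsewhere. Both are sound.
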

\begin{proof}
We assume that $\Ind_W (E^A )\in A$. By the discussions before Lemma \ref{lem:expectation},
we may assume that
$$
B=pM_k (A)p, \quad D=pM_k (C)p, \quad E^B (E^A \otimes\id_{M_k (\BC)})|_{pM_k (C)p},
$$
where $k\in \BN$ and $p$ is a projection in $M_k (A)$ satisfying that
$M_k (A)pM_k (A)=M_k (A)$ and $M_k (C)pM_k (C)=M_k (C)$.
Then by the discussions before Lemma \ref{lem:expectation},
$$
\Ind_W (E^B )=(\Ind_W (E^A )\otimes I_k )p .
$$
Since $\Ind_W (E^A )\in A$, $\Ind_W (E^B )\in pM_k (A)p=B$.
Thus, we obtain the conclusion.
\end{proof}

Let $W$ be a $C_1 - D_1$-equivalence bimodule.
We suppose that $\Ind_W (E^A )\in A$. Then
$\Ind_W (E^B )\in B$ by Lemma \ref{lem:index2}. Also,
we suppose that $Y$ is included in $W$ as its closed subspace
and that the inclusions $C\subset C_1$ and $D\subset D_1$ are strongly
Morita equivalent with respect to $W$ and its closed subspace $Y$.
Furthermore, we suppose that there is a conditional expectation $F^Y$ from
$W$ onto $Y$ with respect to $E^C$ and $E^D$ satisfying that
$$
F^Y (e_A \cdot y\cdot e_B )=\Ind_W (E^A )^{-1}\cdot E^X (y) \qquad (*)
$$
for any $y\in Y$, where $e_A$ and $e_B$ are the Jones projections for $E^A$ and
$E^B$, respectively. We note that in Lemma \ref{lem:commute}, we shall show that the conditional
expectation $E^Y$ from $Y_1$ onto $Y$ with respect to $E^C$ and $E^D$ satisfies that
$$
E^Y (e_A \cdot y \cdot e_B )=\Ind_W (E^A )^{-1}\cdot E^X (y)
$$
for any $y\in Y$. We show that there is a $C_1 -D_1$-equivalence bimodule isomorphism $\theta$
from $W$ onto $Y_1$ such that
$$
F^Y =E^Y \circ \theta .
$$
Let $\{(u_i , u_i^* )\}_{i=1}^n $ and $\{(v_j , v_j^* )\}_{j=1}^m$ be quasi-bases for $E^A$ and $E^B$,
respectively and let $\{(w_i , w_i^* )\}_{i=1}^n$ and $\{(z_j , z_j^* )\}_{j=1}^m$ be their dual
quasi-bases for $E^C$ and $E^D$ defined by
\begin{align*}
w_i & =u_i e_A \Ind_W (E^A )^{\frac{1}{2}} \, , (i=1,2,\dots, n), \\
z_j & =v_j e_B \Ind_W (E^B )^{\frac{1}{2}} \, , (j=1,2\dots, m) ,
\end{align*}
respectively. Let $\theta$ be the map from $W$ to $Y_1$ defined by
\begin{align*}
\theta(y) & =\Ind_W (E^A )\sum_{i, j}u_i \otimes E^X (F^Y (e_A u_i^* \cdot y\cdot v_j e_B ))\otimes \widetilde{v_j} \\
& =\sum_{i, j}u_i \otimes E^X (F^Y (e_A u_i^* \cdot y \cdot v_j e_B ))\otimes\widetilde{v_j}\cdot \Ind_W (E^B ) .
\end{align*}
for any $y\in W$. Clearly $\theta$ is a linear map from $W$
to $Y_1$.

\begin{lemma}\label{lem:bimodule2}With the above notations, for any $c_1 , c_2 \in C$,
$d_1 , d_2 \in D$ and $y\in W$,
$$
\theta(c_1 e_A c_2 \cdot y) =c_1 e_A c_2 \cdot \theta(y), \quad
\theta(y\cdot d_1 e_B d_2 ) =\theta(y)\cdot d_1 e_B d_2 .
$$
\end{lemma}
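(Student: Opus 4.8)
The plan is to verify the two module identities directly from the definition of $\theta$, reducing everything to the sandwich relations for the Jones projections, the quasi-basis resolutions of the identity, and the balancing of the internal tensor product $Y_1 =C\otimes_A X\otimes_B \widetilde{D}$. Since $C_1$ is the $C^*$-basic construction for $E^A$, I recall the relations $e_A c e_A =E^A (c)e_A$ for $c\in C$ and $\sum_i u_i e_A u_i^* =1$, the latter being exactly the identity already used at the end of the proof of Lemma \ref{lem:bimodule}; the analogous relations $e_B d e_B =E^B (d)e_B$, the commutation $e_B b =b e_B$ for $b\in B$, and $\sum_j v_j e_B v_j^* =1$ hold in $D_1$.

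First I would prove the left identity. Expanding $\theta(c_1 e_A c_2 \cdot y)$ and using that $C_1$ acts on the left $C$-factor, the Jones relation gives $e_A u_i^* \cdot c_1 e_A c_2 =E^A (u_i^* c_1 )e_A c_2$ inside the argument of $F^Y$. Because $E^A (u_i^* c_1 )\in A$, I can pull it out successively through $F^Y$ (condition (2) of Definition \ref{def:expectation}) and through $E^X$ (again condition (2)), then slide it across the balanced tensor $C\otimes_A X$; summing over $i$ via $\sum_i u_i E^A (u_i^* c_1 )=c_1$ collapses the $i$-sum and yields $\Ind_W (E^A )\sum_j c_1 \otimes E^X (F^Y (e_A c_2 \cdot y\cdot v_j e_B ))\otimes\widetilde{v_j}$. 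For the other side, the left $C_1$-action on $C$ gives $c_1 e_A c_2 \cdot u_k =c_1 E^A (c_2 u_k )$; after moving $E^A (c_2 u_k )$ across the tensor and back inside $E^X$ and $F^Y$, the sum over $k$ is evaluated using $E^A (c_2 u_k )e_A u_k^* =e_A c_2 u_k e_A u_k^*$ together with $\sum_k u_k e_A u_k^* =1$, producing the identical expression. This gives $\theta(c_1 e_A c_2 \cdot y)=c_1 e_A c_2 \cdot\theta(y)$.

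The right identity is handled by the mirror-image argument acting on the $\widetilde{D}$-factor. Here the right $D_1$-action is computed through the dual bimodule $\widetilde{D}$, the sandwich relation $e_B d_2 v_j e_B =E^B (d_2 v_j )e_B$ and the commutation $e_B b =b e_B$ for $b\in B$ replace their $A$-counterparts, the outgoing $B$-valued coefficients are transported across $X\otimes_B \widetilde{D}$ using condition (5) of Definition \ref{def:expectation} for both $E^X$ and $F^Y$, and the collapse is achieved with $\sum_j v_j E^B (v_j^* d)=d$. Both sides then reduce to $\Ind_W (E^A )\sum_i u_i \otimes E^X (F^Y (e_A u_i^* \cdot y\cdot d_1 e_B ))\otimes\widetilde{d_2^*}$.

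The main obstacle is bookkeeping rather than conceptual: one must keep straight on which leg of $C\otimes_A X\otimes_B\widetilde{D}$ each algebra acts, and repeatedly exploit the balancing of the tensor product to move $A$- and $B$-valued coefficients onto the factor where a defining property of $E^X$ or $F^Y$ applies. The one genuinely non-formal input is the identity $\sum_i u_i e_A u_i^* =1$ (and its $B$-analogue), which is what lets the two right-hand sides be rewritten without an extra quasi-basis sum; getting the placement of $e_A$ (resp. $e_B$) correct relative to the $A$-valued (resp. $B$-valued) coefficients, via $E^A (c)e_A =e_A c e_A$, is the step where an ordering error would break the argument.
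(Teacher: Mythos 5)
Your proof is correct and follows essentially the same route as the paper's: expand $\theta$, apply the Jones relation $e_A c e_A = E^A (c) e_A$, transport the resulting $A$-valued (resp.\ $B$-valued) coefficients through $F^Y$ and $E^X$ and across the balanced tensor product, and collapse with the quasi-basis identities. The only (cosmetic) difference is that the paper runs a single chain of equalities, collapsing with $\sum_i u_i E^A (u_i^* c_1 )=c_1$ and then re-expanding via $c_2 =\sum_i E^A (c_2 u_i )u_i^*$, whereas you reduce both sides to a common intermediate expression using $\sum_i u_i e_A u_i^* =1$.
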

\begin{proof}For any $c_1 , c_2 \in C$ and $y\in W$,
\begin{align*}
\theta (c_1 e_A c_2 \cdot y) & =\Ind_W (E^A )\sum_{i, j}u_i \otimes E^X (F^Y (E^A (u_i^* c_1 )e_A c_2 \cdot y
\cdot v_j e_B ))\otimes \widetilde{v_j } \\
& =\Ind_W (E^A )\sum_{i, j}u_i E^A (u_i^* c_1 )\otimes E^X (F^Y (e_A c_2 \cdot y\cdot v_j e_B ))\otimes\widetilde{v_j} \\
& =\Ind_W (E^A )\sum_{i, j}c_1 \otimes E^X (F^Y (e_A E^A (c_2 u_i )u_i^* \cdot y\cdot v_j e_B ))\otimes\widetilde{v_j} \\
& =\Ind_W (E^A )\sum_{i, j}c_1 e_A c_2 \cdot u_i \otimes E^X (F^Y (e_A u_i^* \cdot y\cdot v_j e_B ))\otimes\widetilde{v_j} \\
& =c_1 e_A c_2 \cdot \theta (y) .
\end{align*}
Similarly we can see that $\theta(y\cdot d_1 e_B d_2 )=\theta(y)\cdot d_1 e_B d_2 $ for any
$d_1 , d_2 \in D$ and $y\in W$. Therefore, we obtain the conclusion.
\end{proof}

\begin{lemma}\label{lem:surjective2}With the above notations, $\theta$ is surjective.
\end{lemma}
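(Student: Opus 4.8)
The plan is to prove surjectivity in two stages: first identify the restriction of $\theta$ to the copy of $Y$ inside $W$, and then propagate along the bimodule structure using the fullness statement of Lemma~\ref{lem:condition2}.

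First I would compute $\theta(y)$ for $y$ lying in $Y$. For such a $y$ each element $u_i^* \cdot y\cdot v_j$ again lies in $Y$ by Conditions (1) and (2) of Definition~\ref{def:inclusion}, so the hypothesis $(*)$ applies and gives $F^Y (e_A u_i^* \cdot y\cdot v_j e_B )=\Ind_W (E^A )^{-1}\cdot E^X (u_i^* \cdot y\cdot v_j )$. Since this element lies in $X$, since $\Ind_W (E^A )^{-1}\in A$, and since $E^X$ is a projection of norm one satisfying $E^X (a\cdot z)=a\cdot E^X (z)$ for $a\in A$ (Definition~\ref{def:expectation}(2)), a further application of $E^X$ returns the same element, so that $E^X (F^Y (e_A u_i^* \cdot y\cdot v_j e_B ))=\Ind_W (E^A )^{-1}\cdot E^X (u_i^* \cdot y\cdot v_j )$. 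Using that $\Ind_W (E^A )\in C\cap C'$ is central, I would pull the leading factor $\Ind_W (E^A )$ and the internal factor $\Ind_W (E^A )^{-1}$ through the $A$-balanced tensor product and cancel them, obtaining $\theta(y)=\sum_{i,j}u_i \otimes E^X (u_i^* \cdot y\cdot v_j )\otimes\widetilde{v_j}=\phi(y)$. Thus $\theta$ agrees with $\phi$ on $Y$; in particular the closed subspace $\phi(Y)=Y$ of $Y_1$ is contained in $\Ima\theta$.

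Next I would observe that $\Ima\theta$ is a $C_1 -D_1$-sub-bimodule of $Y_1$. Indeed, Lemma~\ref{lem:bimodule2} gives $\theta(c_1 e_A c_2 \cdot y)=c_1 e_A c_2 \cdot\theta(y)$ and $\theta(y\cdot d_1 e_B d_2 )=\theta(y)\cdot d_1 e_B d_2$; since $C_1 =Ce_A C$ and $D_1 =De_B D$ are spanned by elements of these forms, $\theta$ is a genuine $C_1 -D_1$-bimodule map, and hence $\Ima\theta$ is invariant under the left $C_1$- and right $D_1$-actions. Finally I would invoke Lemma~\ref{lem:condition2}, which gives $\la Y_1 , Y \ra_{D_1}=D_1$. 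As $D_1$ is unital, I may write $1_{D_1}=\sum_k \la \eta_k , y_k \ra_{D_1}$ with $\eta_k \in Y_1$ and $y_k \in Y$. Then for arbitrary $\xi\in Y_1$ the imprimitivity relation ${}_{C_1}\la\xi,\eta_k\ra\cdot y_k =\xi\cdot\la\eta_k ,y_k\ra_{D_1}$ yields $\xi=\xi\cdot 1_{D_1}=\sum_k {}_{C_1}\la\xi,\eta_k\ra\cdot y_k$. Each $y_k$ lies in $Y\subset\Ima\theta$ and $\Ima\theta$ is a left $C_1$-module, so the right-hand side lies in $\Ima\theta$; therefore $\xi\in\Ima\theta$ and $\theta$ is surjective.

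I expect the main obstacle to be the first stage: verifying carefully that the two index factors $\Ind_W (E^A )^{\pm1}$ cancel across the $A$-balanced tensor, which relies simultaneously on the centrality $\Ind_W (E^A )\in C\cap C'$ and on the module and idempotency properties of $E^X$. Once the identity $\theta|_Y=\phi$ is in hand, the bimodule invariance of $\Ima\theta$ and the fullness argument via Lemma~\ref{lem:condition2} are routine.
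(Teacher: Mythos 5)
Your proof is correct. The first stage---using Condition $(*)$ together with the $A$-linearity and idempotency of $E^X$ and the centrality of $\Ind_W (E^A )$ to cancel the two index factors across the balanced tensor and conclude $\theta |_Y =\phi$---is exactly the (largely implicit) first step of the paper's argument, which applies it to elements $x\in X$. Where you genuinely diverge is in how surjectivity is then extracted. The paper simply exhibits an explicit preimage of each elementary tensor: by Lemma \ref{lem:bimodule2} and the identity $e_A \cdot\phi(x)\cdot e_B =1\otimes x\otimes\widetilde{1}$ for $x\in X$ (the computation of Lemma \ref{lem:commute}), one gets $\theta(ce_A \cdot x\cdot e_B d^* )=ce_A \cdot\phi(x)\cdot e_B d^* =c\otimes x\otimes\widetilde{d}$, and such tensors span $Y_1$. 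You instead observe that $\Ima\,\theta$ is a $C_1 -D_1$-sub-bimodule containing $\phi(Y)$ and invoke the fullness $\la Y_1 , Y \ra_{D_1}=D_1$ of Lemma \ref{lem:condition2} plus the imprimitivity relation to write an arbitrary $\xi\in Y_1$ as a finite sum $\sum_k {}_{C_1}\la \xi, \eta_k \ra\cdot y_k$ with $y_k \in Y$. Both arguments are valid: the paper's is shorter, needs only $X\subset\Ima\,\theta$ rather than all of $Y$, and is self-contained, while yours trades the explicit $e_A \cdot\phi(x)\cdot e_B$ computation for a reuse of Lemma \ref{lem:condition2}. Two minor points you should make explicit: the equivariance of Lemma \ref{lem:bimodule2} extends to all of $C_1$ and $D_1$ by linearity because $C_1 =Ce_A C$ and $D_1 =De_B D$ with no closure needed in the index-finite setting; and the finite decomposition $1_{D_1}=\sum_k \la \eta_k , y_k \ra_{D_1}$ is available either because for unital inclusions the inner-product span is already all of $D_1$ (Remark \ref{remark:closure}(2)) or by the standard invertibility perturbation trick.
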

\begin{proof}By Lemma \ref{lem:bimodule2} and Condition ($*$), for any $c\in C$, $d\in D$ and $x\in X$
\begin{align*}
\theta(ce_A \cdot x\cdot e_B d^* ) & =ce_A \cdot \theta(x)\cdot e_B d^* \\
& =\sum_{i, j}ce_A \cdot u_i \otimes E^X (u_i^* \cdot x\cdot v_j )\otimes \widetilde{v_j}\cdot e_B d^* \\
& =\sum_{i, j}c\otimes E^X (E^A (u_i )u_i^* \cdot x\cdot v_j E^B (v_j^* ))\otimes \widetilde{d}
=c\otimes x\otimes\widetilde{d} .
\end{align*}
Hence $\theta$ is surjective.
\end{proof}

Next, we show that $\theta$ preserves the both-sided inner products.

\begin{lemma}\label{lem:commute}For any $y\in Y$,
\begin{align*}
& e_A \cdot y\cdot e_B =e_A \cdot \phi(y)\cdot e_B =e_A \cdot E^X (y)=E^X (y)\cdot e_B , \\
& E^Y (e_A \cdot y\cdot e_B ) =\Ind_W (A)^{-1}\cdot E^X (y)=E^X (y)\cdot \Ind_W (B)^{-1} .
\end{align*}
\end{lemma}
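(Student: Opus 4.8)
The plan is to carry out the entire computation inside $Y_1=C\otimes_A X\otimes_B\widetilde{D}$, under the identification of $Y$ with $\phi(Y)$; the first equality $e_A\cdot y\cdot e_B=e_A\cdot\phi(y)\cdot e_B$ is then nothing but this identification. The crux is to pin down how the two Jones projections act on $Y_1$. Since the left $C_1$-action on $Y_1$ is carried by the leftmost factor $C$, regarded as a $C_1-A$-equivalence bimodule exactly as in Section \ref{sec:example}, and the right $D_1$-action is carried by $\widetilde{D}$, the dual of the $D_1-B$-equivalence bimodule $D$, I would first record
$$e_A\cdot(c\otimes x\otimes\widetilde{d})=E^A(c)\otimes x\otimes\widetilde{d},\qquad (c\otimes x\otimes\widetilde{d})\cdot e_B=c\otimes x\otimes\widetilde{E^B(d)},$$
which follow from $e_A\cdot c=E^A(c)$ in $C$ and from $\widetilde{d}\cdot e_B=\widetilde{e_B\cdot d}=\widetilde{E^B(d)}$ in $\widetilde{D}$.

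Feeding $\phi(y)=\sum_{i,j}u_i\otimes E^X(u_i^*\cdot y\cdot v_j)\otimes\widetilde{v_j}$ into the first formula and moving the resulting $E^A(u_i)\in A$ across the leftmost balanced tensor, condition $(2)$ of Definition \ref{def:expectation} rewrites $E^A(u_i)\cdot E^X(u_i^*\cdot y\cdot v_j)$ as $E^X(E^A(u_i)u_i^*\cdot y\cdot v_j)$, and the quasi-basis identity $\sum_i E^A(u_i)u_i^*=1$ collapses the $i$-sum to give $e_A\cdot\phi(y)=\sum_j 1\otimes E^X(y\cdot v_j)\otimes\widetilde{v_j}$. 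Multiplying by $e_B$ on the right turns $\widetilde{v_j}$ into $\widetilde{E^B(v_j)}=E^B(v_j^*)\cdot\widetilde{1}$; sliding $E^B(v_j^*)\in B$ across the $B$-tensor, condition $(5)$ absorbs it into $E^X$, and $\sum_j v_j E^B(v_j^*)=1$ then yields $e_A\cdot\phi(y)\cdot e_B=1\otimes E^X(y)\otimes\widetilde{1}$. Specializing the same two reductions to a single element $x\in X$, where instead condition $(4)$ gives $E^X(x\cdot v_j)=x\cdot E^B(v_j)$ and condition $(1)$ gives $E^X(u_i^*\cdot x)=E^A(u_i^*)\cdot x$, one checks $e_A\cdot\phi(x)=1\otimes x\otimes\widetilde{1}=\phi(x)\cdot e_B$; taking $x=E^X(y)$ supplies the remaining two equalities $e_A\cdot E^X(y)=1\otimes E^X(y)\otimes\widetilde{1}=E^X(y)\cdot e_B$ and completes the first line.

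The second line is then immediate: applying the definition of $E^Y$ to $e_A\cdot y\cdot e_B=1\otimes E^X(y)\otimes\widetilde{1}$ gives $E^Y(1\otimes E^X(y)\otimes\widetilde{1})=\Ind_W(E^A)^{-1}\cdot E^X(y)$, and the last equality $\Ind_W(E^A)^{-1}\cdot E^X(y)=E^X(y)\cdot\Ind_W(E^B)^{-1}$ follows from Lemma \ref{lem:index} applied to $E^X(y)\in X\subset Y$ together with the invertibility and centrality of the Watatani indices. I expect the one genuine obstacle to be the bookkeeping for the dual bimodule $\widetilde{D}$: one must fix the conventions for the left $B$-action $b\cdot\widetilde{d}=\widetilde{db^*}$ and the right $D_1$-action correctly, since it is exactly the identity $\widetilde{E^B(v_j)}=E^B(v_j^*)\cdot\widetilde{1}$ that lets the $v_j$ recombine through the quasi-basis relation. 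Everything else is the routine sliding of elements of $A$ and $B$ across the balanced tensor products.
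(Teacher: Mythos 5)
Your proposal is correct and follows essentially the same route as the paper: identify $y$ with $\phi(y)=\sum_{i,j}u_i\otimes E^X(u_i^*\cdot y\cdot v_j)\otimes\widetilde{v_j}$, use the actions $e_A\cdot c=E^A(c)$ and $\widetilde{d}\cdot e_B=\widetilde{E^B(d)}$ together with Conditions (1), (2), (4), (5) of Definition \ref{def:expectation} and the quasi-basis identities to collapse everything to $1\otimes E^X(y)\otimes\widetilde{1}$, and then apply $E^Y$. The only cosmetic difference is that you evaluate $E^Y$ directly from its defining formula on elementary tensors, whereas the paper routes through $E^Y(e_A\cdot E^X(y))=E^C(e_A)\cdot E^X(y)$; both are equally valid here.
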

\begin{proof}For any $y\in Y$,
\begin{align*}
e_A \cdot y\cdot e_B & =e_A \cdot \sum_{i, j}u_i \otimes E^X (u_i^* \cdot y\cdot v_j )\otimes\widetilde{v_j}\cdot e_B \\
& =\sum_{i, j}1\otimes E^X (E^A (u_i )u_i^* \cdot y\cdot v_j E^B (v_j^* ))\otimes\widetilde{1}
=1\otimes E^X (y)\otimes \widetilde{1} .
\end{align*}
Also, by the similar computations to the above, for any $y\in Y$
$$
e_A \cdot E^X (y)=e_A \cdot \phi(E^X (y))=E^X (y)\cdot e_B =1\otimes E^X (y)\otimes\widetilde{1} .
$$
Furthermore,
\begin{align*}
E^Y (e_A \cdot y\cdot e_B ) & =E^Y(e_A \cdot E^X (y))=E^C (e_A )\cdot E^X (y) \\
& =\Ind_W (A)^{-1}\cdot E^X (y)=E^X (y)\cdot \Ind_W (B)^{-1}
\end{align*}
by Lemma \ref{lem:index}.
Thus, we obtain the conclusion.
\end{proof}
\begin{lemma}\label{lem:preserving}With the above notations,
$\theta$ preserves the both-sided inner products.
\end{lemma}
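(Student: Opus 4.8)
The plan is to show first that the map $\theta$ restricts on $Y$ to the canonical embedding $\phi$, and then to reduce both inner‑product identities to the subspace $Y$, where they follow from Lemma~\ref{lem:bimodule} and the fact that the inner products of $W$ restrict on $Y$ to those coming from the strong Morita equivalence of the inclusions.

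First I would prove that $\theta|_Y=\phi$. Fix $y\in Y$. Since $u_i\in C$ and $v_j\in D$, we have $u_i^* \cdot y\cdot v_j\in Y$, so $e_A u_i^* \cdot y\cdot v_j e_B=e_A \cdot (u_i^* \cdot y\cdot v_j)\cdot e_B$, and Condition $(*)$ gives $F^Y(e_A u_i^* \cdot y\cdot v_j e_B)=\Ind_W (E^A )^{-1}\cdot E^X (u_i^* \cdot y\cdot v_j )$. Because $\Ind_W (E^A )\in A$ is central in $C$, because $E^X$ is left $A$-linear (Condition (2) in Definition~\ref{def:expectation}), and because $E^X$ fixes $X$ pointwise, applying $E^X$ and substituting into the definition of $\theta$ lets the central factor $\Ind_W (E^A )$ pass across $\otimes_A$ and cancel, yielding $\theta(y)=\sum_{i,j}u_i \otimes E^X (u_i^* \cdot y\cdot v_j )\otimes\widetilde{v_j}=\phi(y)$. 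Together with Lemma~\ref{lem:bimodule2}, which makes $\theta$ a $C_1$–$D_1$-bimodule map, this gives $\theta(y\cdot d_1)=\phi(y)\cdot d_1$ and $\theta(c_1 \cdot y)=c_1 \cdot\phi(y)$ for all $y\in Y$, $c_1\in C_1$, $d_1\in D_1$.

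Next I would establish the density statements $W=\overline{\operatorname{span}}(Y\cdot D_1)=\overline{\operatorname{span}}(C_1 \cdot Y)$. Using $C_1 =\overline{\operatorname{span}}(Ce_A C)$ and $D_1 =\overline{\operatorname{span}}(De_B D)$, the fullness conditions ${}_{C_1}\la W, Y\ra=C_1$ and $\la W, Y\ra_{D_1}=D_1$, and $\overline{{}_{C_1}\la Y, Y\ra}=C$ (all coming from the hypothesis that $C\subset C_1$ and $D\subset D_1$ are strongly Morita equivalent with respect to $W$ and $Y$), I would first show that $V:=\overline{\operatorname{span}}(C_1 \cdot Y\cdot D_1)$ satisfies $\overline{{}_{C_1}\la V, V\ra}=C_1$ (since $1\in C\subseteq C_1$), and hence $V=W$, because $W=\overline{{}_{C_1}\la V, V\ra\cdot W}\subseteq V$ via the linking identity ${}_{C_1}\la v, v'\ra\cdot w=v\cdot\la v', w\ra_{D_1}$ and the fact that $V$ is a right $D_1$-submodule. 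The same linking identity, applied to $y\cdot\la w, y'\ra_{D_1}={}_{C_1}\la y, w\ra\cdot y'$ with ${}_{C_1}\la Y, W\ra=C_1$, identifies $\overline{\operatorname{span}}(Y\cdot D_1)$ with $\overline{\operatorname{span}}(C_1 \cdot Y)$, and both then coincide with $W$.

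Finally I would conclude. For $y, y'\in Y$ and $d_1 , d_1'\in D_1$, the formula $\theta(y\cdot d_1)=\phi(y)\cdot d_1$ and Lemma~\ref{lem:bimodule}(4) give
\[
\la\theta(y\cdot d_1), \theta(y'\cdot d_1')\ra_{D_1}=d_1^*\,\la\phi(y), \phi(y')\ra_{D_1}\,d_1'=d_1^*\,\la y, y'\ra_D\,d_1',
\]
while $\la y\cdot d_1 , y'\cdot d_1'\ra_{D_1}=d_1^*\,\la y, y'\ra_{D_1}\,d_1'$, and the restriction of the $D_1$-valued inner product of $W$ to $Y$ is exactly the original $D$-valued inner product $\la\cdot,\cdot\ra_D$ (this is what it means for $Y$ to be the closed subspace of $W$ realizing the strong Morita equivalence of the inclusions). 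Thus the two expressions agree, and by sesquilinearity, continuity and $W=\overline{\operatorname{span}}(Y\cdot D_1)$ we obtain $\la\theta(w), \theta(w')\ra_{D_1}=\la w, w'\ra_{D_1}$ for all $w, w'\in W$. The left $C_1$-valued inner product is handled symmetrically, using $\theta(c_1 \cdot y)=c_1 \cdot\phi(y)$, Lemma~\ref{lem:bimodule}(3), $W=\overline{\operatorname{span}}(C_1 \cdot Y)$, and ${}_{C_1}\la y, y'\ra={}_C\la y, y'\ra$ on $Y$. The main obstacle I anticipate is the density step: one must pass from the two-sided span $\overline{\operatorname{span}}(C_1 \cdot Y\cdot D_1)$ to the one-sided spans, so that inner‑product preservation established only on $Y$ propagates to all of $W$ through a single module action rather than getting trapped between the left and right actions; this reduction relies on the fullness conditions and the linking identity, after which everything collapses onto Lemma~\ref{lem:bimodule} once $\theta|_Y=\phi$ is known.
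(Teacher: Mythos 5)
Your argument is correct, but it follows a genuinely different route from the paper's. The paper proves the lemma by one long direct computation: it expands ${}_{C_1}\la \theta(y_1),\theta(y_2)\ra$ inside $Y_1=C\otimes_A X\otimes_B\widetilde{D}$ for arbitrary $y_1,y_2\in W$, uses Lemma \ref{lem:commute} to convert each term ${}_A\la E^X(F^Y(\cdot)),E^X(F^Y(\cdot))\ra$ into ${}_{C_1}\la e_A\cdot F^Y(\cdot)\cdot e_B,\, e_A\cdot F^Y(\cdot)\cdot e_B\ra$, and then collapses the multiple sums with the reproduction formula $\sum_i w_i\cdot F^Y(w_i^*\cdot z)=z$ for the dual quasi-basis $\{(w_i,w_i^*)\}$ of $E^C$; no density argument is needed. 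You instead first pin down $\theta$ on the subspace $Y$: the observation that Condition $(*)$, together with $\Ind_W(E^A)\in A\cap C'$ and the $A$-linearity of $E^X$, forces $\theta|_Y=\phi$ is correct and is not made explicit in the paper (it is, in effect, what makes $\theta$ canonical). You then upgrade Lemma \ref{lem:bimodule2} to full $C_1-D_1$-bimodularity via $C_1=\mathrm{span}(Ce_AC)$ and $D_1=\mathrm{span}(De_BD)$, and transport Lemma \ref{lem:bimodule} (3) and (4) to all of $W$ through the density of $\mathrm{span}(C_1\cdot Y)=\mathrm{span}(Y\cdot D_1)$ in $W$, which you correctly derive from the fullness conditions ${}_{C_1}\la W,Y\ra=C_1$ and $\la W,Y\ra_{D_1}=D_1$. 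The extra inputs your route needs are harmless: boundedness of $\theta$ (immediate, since $F^Y$ and $E^X$ are contractions by Remark \ref{remark:norm}) and the convention that the inner products of $W$ restrict on $Y$ to the original ones --- a convention the paper's computation also uses implicitly when it passes between inner products computed in $Y_1$ and in $W$. Your version is more modular and shorter at the price of these structural preliminaries; the paper's is self-contained on each pair of elements at the price of a lengthy calculation.
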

\begin{proof}Let $y_1 , y_2 \in W$. Then
$$
\theta(y_1 ) =\Ind_W (E^A ) \sum_{i, j}u_i \otimes x_1 \otimes\widetilde{v_j }, \quad
\theta(y_2 ) =\Ind_W (E^A ) \sum_{i_1 , j_1 }u_{i_1} \otimes x_2 \otimes\widetilde{v_{j_1} } ,
$$
where 
$$
x_1 =E^X (F^Y (e_A u_i^* \cdot y_1 \cdot v_j e_B )) , \quad x_2 =
E^X (F^Y (e_A u_{i_1}^* \cdot y_2 \cdot v_{j _1}e_B )) .
$$
Hence by Lemma \ref{lem:commute},
\begin{align*}
& {}_{C_1} \la \theta(y_1 ), \theta(y_2) \ra =\Ind_W (E^A )^2 \sum_{i, j, i_1 , j_1 }
{}_{C_1} \la u_i \otimes x_1 \otimes\widetilde{v_j} , u_{i_1}\otimes x_2 \otimes\widetilde{v_{j_1}} \ra \\
& =\Ind_W (E^A )^2 \sum_{i, j, i_1 , j_1}{}_{C_1}\la u_i \, {}_A \la x_1 \otimes \widetilde{v_j},
x_2 \otimes \widetilde{v_{j_1}} \ra , u_{i_1} \ra \\
& =\Ind_W (E^A )^2\sum_{i, j, i_1 , j_1}{}_{C_1} \la u_i \, {}_A \la x_1 \cdot  {}_B \la \widetilde{v_j}, 
\widetilde{v_{j_1}} \ra , x_2 \ra , u_{i_1} \ra \\
& =\Ind_W (E^A )^2\sum_{i, j, i_1 , j_1}{}_{C_1} \la u_i \, {}_A \la x_1 \cdot \la v_j, 
v_{j_1} \ra_B , x_2 \ra , u_{i_1} \ra \\
& =\Ind_W (E^A )^2\sum_{i, j, i_1 , j_1}{}_{C_1} \la u_i \, {}_A \la x_1 \cdot E^B (v_j^* v_{j_1}) ,
x_2 \ra , u_{i_1} \ra \\
& =\Ind_W (E^A )^2 \sum_{i, j, i_1 , j_1}u_i e_A \, {}_A \la x_1 \cdot E^B (v_j^* v_{j_1}), x_2 \ra u_{i_1}^* \\
& =\Ind_W (E^A )^2 \\
& \times \sum_{i, i_1 ,  j_1}u_i e_A \, {}_A \la E^X(F^Y (e_A u_i^* \cdot y_1 \cdot v_{j_1} e_B)) , \,
E^X (F^Y (e_A u_{i_1}^* \cdot y_2 \cdot v_{j_1}e_B )) \ra u_{i_1}^* \\
& =\Ind_W (E^A )^2 \\
& \times \sum_{i, i_1 , j_1}u_i \, {}_{C_1} \la e_A \cdot F^Y (e_A u_i^* \cdot y_1 \cdot v_{j_1}e_B)\cdot e_B , \,
e_A \cdot F^Y (e_A u_{i_1}^* \cdot y_2 \cdot v_{j_1}e_B )\cdot e_B \ra u_{i_1}^* \\
& =\Ind_W (E^A )^2 \\
& \times \sum_{i, i_1 , j_1}{}_{C_1} \la u_i e_A \cdot F^Y (e_A u_i^* \cdot y_1 \cdot v_{j_1}e_B )\cdot e_B , \,
u_{i_1}e_A \cdot F^Y (e_A u_{i_1}^* \cdot y_2 \cdot v_{j_1}e_B )\cdot e_B \ra \\
& =\sum_{i, i_1 , j_1} {}_{C_1} \la w_i \cdot F^Y (w_i^* \cdot y_1 \cdot v_{j_1}e_B )\cdot e_B \,, \,
w_{i_1}\cdot F^Y (w_{i_1}^* \cdot y_2 \cdot v_{j_1}e_B )\cdot e_B \ra \\
& =\sum_{j_1}{}_{C_1} \la y_1 \cdot v_{j_1}e_B , \,  y_2 \cdot v_{j_1}e_B \ra 
=\sum_{j_1}{}_{C_1} \la y_1 \cdot v_{j_1}e_B v_{j_1}^* , \, y_2 \ra
={}_{C_1} \la y_1 , y_2 \ra .
\end{align*}
Also, by Lemma \ref{lem:commute}, we ca see that
$\la \theta(y_1 ), \theta(y_2) \ra_{D_1}=\la y_ 1 , y_2 \ra_{D_1}$
in the same way as in the above. Therefore, we obtain the conclusion.
\end{proof}

\begin{prop}\label{prop:universal}With the above notations, $\theta$ is a
$C_1 -D_1$-equivalence bimodule isomorphism from $W$ onto $Y_1$ such that $F^Y =E^Y \circ \theta$.
\end{prop}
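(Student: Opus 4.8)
The plan is to assemble the four preceding lemmas into the three ingredients an equivalence bimodule isomorphism requires---bimodule linearity, bijectivity, and preservation of both inner products---and then to settle the intertwining relation $F^Y=E^Y\circ\theta$ by a computation on a dense spanning set. Two of the three structural ingredients are already in hand: Lemma \ref{lem:preserving} gives that $\theta$ preserves both the left $C_1$-valued and the right $D_1$-valued inner products, and Lemma \ref{lem:surjective2} gives surjectivity, so only the module-map property and injectivity remain to be recorded.

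First I would upgrade Lemma \ref{lem:bimodule2} from the generators $c_1 e_A c_2$ and $d_1 e_B d_2$ to all of $C_1$ and $D_1$. Since $E^A$ and $E^B$ are of Watatani index-finite type, the basic constructions are the linear spans $C_1=Ce_AC$ and $D_1=De_BD$ (Watatani \cite{Watatani:index}), so every element of $C_1$ (resp.\ $D_1$) is a finite linear combination of elements $c_1e_Ac_2$ (resp.\ $d_1e_Bd_2$); by linearity of $\theta$ and Lemma \ref{lem:bimodule2}, $\theta$ is then a left $C_1$-module and right $D_1$-module map. For injectivity, Lemma \ref{lem:preserving} yields $||\theta(y)||^2=||\la\theta(y),\theta(y)\ra_{D_1}||=||\la y,y\ra_{D_1}||=||y||^2$, so $\theta$ is isometric, hence injective; together with Lemma \ref{lem:surjective2} this makes $\theta$ bijective. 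Being a bijective bimodule map preserving both inner products, $\theta$ is a $C_1-D_1$-equivalence bimodule isomorphism.

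The main work is the identity $F^Y=E^Y\circ\theta$, and here I would avoid a frontal computation on a general $y\in W$---where the defining property $(*)$ does not directly apply, since the middle factor of $e_Au_i^*\cdot y\cdot v_je_B$ need not lie in $Y$---and instead test the identity on a dense spanning set. By the computation in the proof of Lemma \ref{lem:surjective2}, $\theta(ce_A\cdot x\cdot e_Bd^*)=c\otimes x\otimes\widetilde{d}$ for $c\in C$, $d\in D$, $x\in X$; since the elementary tensors $c\otimes x\otimes\widetilde{d}$ span a dense subspace of $Y_1$ and $\theta$ is an isometric bijection, the elements $ce_A\cdot x\cdot e_Bd^*$ span a dense subspace of $W$. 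On such an element, Conditions $(2)$ and $(5)$ of Definition \ref{def:expectation} for $F^Y$ let me pull the outer factors $c\in C$ and $d^*\in D$ through $F^Y$, reducing matters to $F^Y(e_A\cdot x\cdot e_B)$; property $(*)$ together with $E^X(x)=x$ then gives $F^Y(ce_A\cdot x\cdot e_Bd^*)=\Ind_W(E^A)^{-1}c\cdot x\cdot d^*$, where I use that $\Ind_W(E^A)\in A\subset C\cap C'$ is central in $C$. On the other side, the definition of $E^Y$ gives $E^Y(\theta(ce_A\cdot x\cdot e_Bd^*))=E^Y(c\otimes x\otimes\widetilde{d})=\Ind_W(E^A)^{-1}c\cdot x\cdot d^*$. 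The two coincide on the dense spanning set, so by linearity and continuity $F^Y=E^Y\circ\theta$ on all of $W$. The one delicate point is precisely this reduction: recognizing that property $(*)$ is exactly what makes the generator-level computation close up, after which everything else is bookkeeping with the module conditions.
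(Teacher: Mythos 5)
Your proposal is correct, and the structural half (bimodule map on generators extended by linearity over $C_1=Ce_AC$ and $D_1=De_BD$, injectivity from isometry, surjectivity and inner-product preservation from Lemmas \ref{lem:surjective2} and \ref{lem:preserving}) is exactly what the paper compresses into its one-line citation of Lemmas \ref{lem:bimodule2}, \ref{lem:surjective2} and \ref{lem:preserving}. Where you genuinely diverge is the identity $F^Y=E^Y\circ\theta$: the paper proves it by a direct computation valid for an \emph{arbitrary} $y\in W$, expanding $(E^Y\circ\theta)(y)=\sum_{i,j}u_i\cdot E^X(F^Y(e_Au_i^*\cdot y\cdot v_je_B))\cdot v_j^*$ and then applying Condition $(*)$ not to $e_Au_i^*\cdot y\cdot v_je_B$ (which, as you correctly note, it does not cover) but to the element $F^Y(e_Au_i^*\cdot y\cdot v_je_B)$, which \emph{does} lie in $Y$; after rewriting in terms of the dual quasi-bases $w_i=u_ie_A\Ind_W(E^A)^{1/2}$, $z_j=v_je_B\Ind_W(E^B)^{1/2}$, the expansion $\sum_iw_i\cdot F^Y(w_i^*\cdot\eta)=\eta$ (Lemma \ref{lem:basis} applied to the inclusion $C\subset C_1$, $D\subset D_1$, $Y\subset W$) collapses the sum to $F^Y(y)$. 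Your route instead checks the identity only on the spanning set $ce_A\cdot x\cdot e_Bd^*$, where $(*)$ applies verbatim, and extends by density and continuity, using that $\theta$ is an isometric bijection to transport density of the elementary tensors in $Y_1$ back to $W$ and that $F^Y$ and $E^Y$ are norm-one projections (Remark \ref{remark:norm}). Both arguments are sound; yours is shorter and reuses the computation already done in Lemma \ref{lem:surjective2}, at the cost of the extra density and boundedness bookkeeping, while the paper's is pointwise and needs no closure argument. The only blemish in your write-up is the assertion ``$\Ind_W(E^A)\in A\subset C\cap C'$,'' which reads as the false claim that $A$ is central in $C$; what you need (and what is true) is that $\Ind_W(E^A)$ lies in $A\cap C\cap C'$, i.e.\ it is an element of $A$ that is central in $C$, which is what justifies commuting it past $c$.
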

\begin{proof}By Lemmas \ref{lem:bimodule2}, \ref{lem:surjective2} and \ref{lem:preserving},
we have only to show that $F^Y =E^Y \circ\theta$.
For any $y\in W$,
\begin{align*}
(E^Y \circ\theta)(y) & =\sum_{i, j}u_i\cdot E^X (F^Y (e_A u_i^* \cdot y\cdot v_j e_B ))\cdot v_j^* \\
& =\Ind_W (E^A )\sum_{i, j}u_i \cdot F^Y (e_A \cdot F^Y (e_A u_i^* \cdot y\cdot v_j e_B )\cdot e_B )\cdot v_j^* \\
& =\Ind_W (E^A )\sum_{i, j}F^Y (u_i e_A \cdot F^Y (e_A u_i^* \cdot y\cdot v_j e_B )\cdot e_B v_j^* ) \\
& =\Ind_W (E^A )^{-1}\sum_{i, j}F^Y (w_i \cdot F^Y (w_i^* \cdot y\cdot z_j )\cdot z_j^* ) \\
& =\Ind_W (E^A )^{-1}\sum_j F^Y (y\cdot z_j z_j^* ) \\
& =F^Y (y)
\end{align*}
by Condition ($*$) and Lemma \ref{lem:index}. Therefore, we obtain the conclusion.
\end{proof}

\section{Duality}\label{sec:duality}
In this section, we shall present a certain duality theorem for inclusions of
equivalence bimodules.
\par
Let $A\subset C$ and $B\subset D$ be unital inclusions of unital $C^*$-algebras,
which are strongly Morita equivalent with respect to a $C-D$-equivalence bimodule $Y$
and its closed subspace $X$. Let $E^A$ and $E^B$ be conditional
expectations of Watatani index-finite type from $C$ and $D$ onto $A$ and $B$, respectively.
Let $E^X$ be a conditional expectation from $Y$ onto $X$ with respect to
$E^A$ and $E^B$. Let $C_1$ and $D_1$ be the
$C^*$-basic constructions for $E^A$ and $E^B$ and $e_A$ and $e_B$
the Jones projections for $E^A$ and $E^B$, respectively. Let $Y_1$ be the upward
basic construction for $E^X$ and let $E^C$, $E^D$ and $E^Y$ be the dual conditional
expectations from $C_1$, $D_1$ and $Y_1$ onto $C$, $D$ and $Y$, respectively.
Furthermore, let $C_2$ and $D_2$ be the $C^*$-basic constructions for
$E^C$ and $E^D$, respectively and $e_C$ and $e_D$ the Jones projections for
$E^C$ and $E^D$, respectively. Let $Y_2$ be the upward basic construction for $E^Y$ and
let $E^{C_1}$, $E^{D_1}$ and $E^{Y_1}$ be the
dual conditional expectations from $C_2$, $D_2$ and $Y_2$ onto $C_1$, $D_1$ and $Y_1$,
respectively. Let $\{(u_i , u_i^* )\}_{i=1}^k$ and $\{(v_i , v_i^* )\}_{i=1}^{k_1}$ be
quasi-bases for $E^A$ and $E^B$, respectively. We note that we can assume that
$k=k_1$.

We suppose that $\Ind_W (E^A )\in A$. Then
$\Ind_W (E^B )\in B$ by Lemma \ref{lem:index}. By Proposition \ref{prop:morita3}, the
inclusions $C_1 \subset C_2$ and $A\subset C$ are strongly Morita equivalent
with respect to the $C_2 -C$-equivalence bimodule $C_1$ and its closed subspace $C$.
Also, there is a conditional expectation $G$ from $C_1$ onto $C$ with respect to $E^C$ and $E^A$.
Let $p=[E^A (u_i^* u_j )]_{i, j=1}^k$. Then by the discussions in Section \ref{sec:definition},
$p$ is a full projection in $M_k (A)$.
Let $\Psi_{C_1}$ be the map from $C_1$ to $M_k (A)$ defined by
$$
\Psi_{C_1} (c_1 e_A c_1 )=[E^A (u_i^* c_1 )E^A (c_2 u_j )]_{i, j=1}^k
$$
for any $c_1 , c_2 \in C$. Then by the discussions in Section \ref{sec:definition},
$\Psi_{C_1}$ is an isomorphism of $C_1$ onto $pM_k (A)p$. Let $\Psi_{C_2}$ be the map
from $C_2$ to $M_k (C)$ defined by
\begin{align*}
\Psi_{C_2} (c_1 e_C c_2 ) & =[E^C (w_i^* c_1 )E^C (c_2 w_j )]_{i, j=1}^k \\
& =[E^C (\Ind_W (E^A )^{\frac{1}{2}}e_A u_i^* c_1 )E^C (\Ind_W (E^A )^{\frac{1}{2}}c_2 u_j e_A )] \\
& =[\Ind_W (E^A )E^C (e_A u_i^* c_1 )E^C (c_2 u_j e_A )]
\end{align*}
for any $c_1 , c_2 \in C_1$, where $\{(w_i, w_i^* )\}_{i=1}^k$ is the quasi-basis for $E^C$ defined by
$w_i =\Ind_W (E^A )^{\frac{1}{2}}u_i e_A$ for $i=1,2,\dots, k$.
Then $\Psi_{C_2}$ is also an isomorphism of $C_2$ onto $pM_k (C)p$.
Furthermore, let $\Phi_C$ be the map from $C$ to $M_k (A)$ defined by
$$
\Phi_C (c)=\left[
\begin{array}{ccc}
E^A (u_1^* c) \\
\vdots \\
E^A (u_k^* c)
\end{array} \right]
$$
for any $c\in C$, By the discussions in Section \ref{sec:definition}, $\Phi_C$ is a $C_1 - A$-equivalence bimodule isomorphism
of the $C_1 -A$-equivalence bimodule $C$ onto the $pM_k (A)p -A$-equivalence bimodule $pM_k (A)(1\otimes f)$,
where $f=\begin{bmatrix} 1 & 0 & \ldots & 0 \\
0 & 0 & \ldots & 0 \\
\vdots & \vdots & \ddots & \vdots \\
0 & 0 & \ldots & 0 \end{bmatrix}\in M_k (\BC)$ and we identify $A$
and $C_1$ with $A\otimes f$ and $pM_k (A)p$, respectively.
Let $\Phi_{C_1}$ be the map from $C_1$ to $M_k (C)$ defined by
\begin{align*}
\Phi_{C_1}(c) & =\left[
\begin{array}{ccc}
E^C (w_1^* c ) \\
\vdots \\
E^C (w_k^* c)
\end{array} \right]
\end{align*}
for any $c\in C$. Then by the discussions in Section \ref{sec:definition},
$\Phi_{C_1}$ is a $C_2 -C$-equivalence bimodule isomorphism of the $C_2 -C$-equivalence
bimodule $C_1$ onto the $pM_k (C)p-C$-equivalence bimodule
$pM_k (C)(1\otimes f)$, where $f=\begin{bmatrix} 1 & 0 & \ldots & 0 \\
0 & 0 & \ldots & 0 \\
\vdots & \vdots & \ddots & \vdots \\
0 & 0 & \ldots & 0 \end{bmatrix}\in M_k (\BC)$ and
we identify $C$ and $C_2$ with $C\otimes f$ and $pM_k (C)p$, respectively.
Thus, the inclusion $C_1 \subset C_2$ can be identified with the inclusion
$pM_k (A)p \subset pM_k (C)p$ , the $C_1 -A$-equivalence bimodule $C$ can be identified with
the $pM_k (A)p-A$-equivalence bimodule $pM_k (A)(1\otimes f)$ and $E^C$ can be
identified with $(E^A \otimes\id)|_{pM_k (A)p}$ by the above isomorphisms.
Similar results to the above hold, that is,
let $q=[E^B (v_i^* v_j )]_{i, j=1}^k$. Then $q$ is a full projection in $M_k (B)$
Then the inclusion $D_1 \subset D_2$
is identified the inclusion $qM_k (B)q\subset qM_k (D)q$, the $D_1 -B$-equivalence bimodule $D$
is identified with $qM_k (B)q-B$-equivalence bimodule $qM_k (B)(1\otimes f)$ and $E^D$ is
identified with $(E^D \otimes\id)|_{qM_k (B)q}$ by the following isomorphisms: Let $\Psi_{D_1}$ be the isomorphism
of $D_1$ onto $qM_k (B)q$ defined by
$$
\Psi_{D_1}(d_1 e_B d_2)=[E^B (v_i^* d_1 )E^B (d_2 v_j )]_{i, j=1}^k ,
$$
for any $d_1, d_2 \in D$. Let $\Psi_{D_2}$ be the isomorphism of $D_2$ onto $qM_k (D)q$
defined by
$$
\Psi_{D_2}(d_1 e_D d_2)=[E^D (z_i^* d_1 )E^D (d_2 z_j )]_{i, j=1}^k
$$
for any $d_1 , d_2 \in D_1$, where $\{(z_i , z_i^*)\}_{i=1}^k$ is the quasi-basis
for $E^D$ defined by $z_i =\Ind_W (B)^{\frac{1}{2}}v_i e_B$ for $i=1,2,\dots, k$. Furthermore,
let $\Phi_D$ be the $D_1 - B$-equivalence bimodule isomorphism of $D$ onto $qM_k (B)(1\otimes f)$ defined by
$$
\Phi_D (d)=\left[
\begin{array}{ccc}
E^B (v_1^* d) \\
\vdots \\
E^B (v_k^* d)
\end{array} \right]
$$
for any $d\in D$, where we identify $D_1$ with $qM_k (B)q$.
Let $\Phi_{D_1}$ be the $D_2 -D$-equivalence bimodule isomorphism of $D_1$ onto $qM_k (D)(1\otimes f)$
defined by
$$
\Phi_{D_1} (d)=\left[
\begin{array}{ccc}
E^D (z_1^* d) \\
\vdots \\
E^D (z_k^* d)
\end{array} \right]
$$
for any $d\in D_1$, where we identify $D_2$ with $qM_k (D)q$.

Let $Y_1$ and $Y_2$ be the upward basic constructions for $E^X$ and $E^Y$, respectively.
By the definitions of $Y_1$ and $Y_2$,
$$
Y_1 =C\otimes_A X \otimes_B \widetilde{D}, \quad Y_2 =C_1 \otimes_C Y\otimes_D \widetilde{D_1} .
$$
Then
$$
Y_1 \cong pM_k (A)(1\otimes f)\otimes_A X \otimes_B (1\otimes f)M_k (B)q
$$
as $C_1 - D_1$-equivalence bimodules
where we identify $pM_k (A)p$ and $qM_k (B)q$ are identified with
$C_1$ and $D_1$, respectively. We regard $p\cdot M_k (X)\cdot q$ as a
$pM_k (A)p-qM_k (B)q$-equivalence bimodule in the usual way.
Similarly
$$
Y_2 \cong pM_k (C)(1\otimes f)\otimes_C Y\otimes_D (1\otimes f)M_k (D)q
$$
as $C_2 -D_2$-equivalence bimodules, where we identify $pM_k (C)p$ and $qM_k (D)q$ are identified with
$C_2$ and $D_2$, respectively. 

\begin{lemma}\label{lem:bimodule3}With the above notations,
$$
pM_k (A)(1\otimes f)\otimes_A X \otimes_B (1\otimes f)M_k (B)q\cong p\cdot M_k (X)\cdot q
$$
as $pM_k (A)p-qM_k (B)q$-equivalence bimodules. Hence $Y_1 \cong p\cdot M_k (X)\cdot q$
as $C_1 -D_1 $-equivalence bimodules, where we identify $pM_k (A)p$ and $qM_k (B)q$
with $C_1$ and $D_1$, respectively.
\end{lemma}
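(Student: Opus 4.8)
The plan is to exhibit an explicit isometric $pM_k(A)p$-$qM_k(B)q$-bimodule isomorphism. Recall from Section \ref{sec:definition} that $f$ denotes the matrix unit in $M_k(\BC)$ with $1$ in the $(1,1)$-entry, that $A$ is identified with $A\otimes f=(1\otimes f)M_k(A)(1\otimes f)$, and likewise $X$ with $(1\otimes f)M_k(X)(1\otimes f)$ via $x\mapsto x\otimes f$. An element $\xi\in pM_k(A)(1\otimes f)$ is then a ``first-column'' matrix over $A$ with $p\xi=\xi$, and an element $\eta\in (1\otimes f)M_k(B)q$ is a ``first-row'' matrix over $B$ with $\eta q=\eta$. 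I would define $\Xi$ on algebraic tensors by
$$
\Xi(\xi\otimes x\otimes\eta)=\xi\cdot(x\otimes f)\cdot\eta\in p\cdot M_k(X)\cdot q,
$$
the products being taken in the natural $M_k(A)$-$M_k(X)$-$M_k(B)$ module structure, so that the $(i,j)$-entry of $\Xi(\xi\otimes x\otimes\eta)$ is $\xi_{i1}\cdot x\cdot\eta_{1j}$.

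First I would verify that $\Xi$ is balanced over $A$ and over $B$, hence descends to the tensor product: this is immediate from $(a\otimes f)(x\otimes f)=(a\cdot x)\otimes f$ and $(x\otimes f)(b\otimes f)=(x\cdot b)\otimes f$, together with the fact that the right $A$-action on $pM_k(A)(1\otimes f)$ and the left $B$-action on $(1\otimes f)M_k(B)q$ are multiplication by $a\otimes f$ and $b\otimes f$. Left $pM_k(A)p$-linearity and right $qM_k(B)q$-linearity then follow from associativity of the products, since the left action of $m\in pM_k(A)p$ changes $\xi$ to $m\xi$ and the right action of $m'\in qM_k(B)q$ changes $\eta$ to $\eta m'$.

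The main step, and the one I expect to be the real work, is to show that $\Xi$ preserves both inner products. For the left inner product I would expand the target entrywise, obtaining
$$
{}_{pM_k(A)p}\la \Xi(\xi\otimes x\otimes\eta),\Xi(\xi'\otimes x'\otimes\eta')\ra_{ij}=\sum_l {}_A\la \xi_{i1}\cdot x\cdot\eta_{1l},\,\xi'_{j1}\cdot x'\cdot\eta'_{1l}\ra,
$$
while the source is computed from the iterated internal-tensor-product formula as ${}_{pM_k(A)p}\la \xi\cdot{}_A\la x\otimes\eta,x'\otimes\eta'\ra,\xi'\ra$ with ${}_A\la x\otimes\eta,x'\otimes\eta'\ra={}_A\la x\cdot{}_B\la\eta,\eta'\ra,x'\ra$. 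The two are reconciled by the imprimitivity identity ${}_A\la x\cdot b,x'\cdot b'\ra={}_A\la x\cdot b(b')^*,x'\ra$ and by recognising $\sum_l\eta_{1l}(\eta'_{1l})^*$ as ${}_B\la\eta,\eta'\ra$; this is a bookkeeping computation that does not rely on the specific form of $p$ and $q$. The right inner product is treated symmetrically.

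Finally I would conclude that $\Xi$ is an equivalence-bimodule isomorphism. Preservation of $\la\cdot,\cdot\ra_{qM_k(B)q}$ makes $\Xi$ isometric, so it extends to the completed internal tensor product as an injective map with closed range $\mathcal R$, a closed $pM_k(A)p$-$qM_k(B)q$-sub-bimodule of $p\cdot M_k(X)\cdot q$. Since $\Xi$ carries the right inner products of the (full) domain onto those of $\mathcal R$, we have $\overline{\la\mathcal R,\mathcal R\ra_{qM_k(B)q}}=qM_k(B)q$; then for any $z\in p\cdot M_k(X)\cdot q$ and $r,r'\in\mathcal R$ the relation $z\cdot\la r,r'\ra_{qM_k(B)q}={}_{pM_k(A)p}\la z,r\ra\cdot r'\in\mathcal R$ gives $p\cdot M_k(X)\cdot q=\overline{p\cdot M_k(X)\cdot q\cdot qM_k(B)q}\subseteq\mathcal R$, so $\Xi$ is surjective. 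This proves the first isomorphism, and combining it with the identification $Y_1\cong pM_k(A)(1\otimes f)\otimes_A X\otimes_B(1\otimes f)M_k(B)q$ recorded just before the lemma yields $Y_1\cong p\cdot M_k(X)\cdot q$ as $C_1$-$D_1$-equivalence bimodules.
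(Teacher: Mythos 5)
Your proposal is correct and follows essentially the same route as the paper: the same map $\xi\otimes x\otimes\eta\mapsto \xi\cdot(x\otimes f)\cdot\eta$, the same bimodule and inner-product verifications (which the paper carries out entrywise in full), and the same final identification of $Y_1$. The only divergence is in the surjectivity step, where the paper simply exhibits explicit preimages $p(1\otimes f_{i1})\otimes x\otimes(1\otimes f_{1j})q$ of the spanning elements $p\cdot(x\otimes f_{ij})\cdot q$, which is a little more direct than your abstract argument via fullness of the range and the imprimitivity relation, though both work.
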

\begin{proof}
We have only to show that
$$
pM_k (A)(1\otimes f)\otimes_A X \otimes_B (1\otimes f)M_k (B)q\cong p\cdot M_k (X)\cdot q
$$
as $pM_k (A)p-qM_k (B)q$-equivalence bimodules. Let $\Phi$ be the map from
$pM_k (A)(1\otimes f)\otimes_A X \otimes_B (1\otimes f)M_k (B)q$ to $p\cdot M_k (X)\cdot q$
defined by
$$
\Phi(pa(1\otimes f)\otimes x\otimes (1\otimes f)bq)=pa\cdot (x\otimes f)\cdot bq
$$
for any $a\in M_k (A)$, $b\in M_k (B)$, $x\in X$. Then it is clear that $\Phi$
is well-defined and a $pM_k (A)p-qM_k (B)q$-bimodule. For any $a_1 ,a_2 \in M_k (A)$,
$b_1 , b_2 \in M_k (B)$ and $x_1, x_2 \in X$,
\begin{align*}
& {}_{pM_k (A)p} \la pa_1 (1\otimes f)\otimes x_1 \otimes (1\otimes f)b_1 q , \,
pa_2 (1\otimes f)\otimes x_2 \otimes (1\otimes f)b_2 q \ra \\
& ={}_{pM_k (A)p} \la pa_1 (1\otimes f)\cdot {}_A \la x_1 \otimes (1\otimes f)b_1 q, \,
x_2 \otimes (1\otimes f)b_2 q \ra, \, pa_2 (1\otimes f) \ra \\
& ={}_{pM_k (A)p} \la pa_1 {}_A \la x_1 \otimes (1\otimes f)b_1 q, \,
x_2 \otimes (1\otimes f)b_2 q \ra \otimes f, \, pa_2 (1\otimes f) \ra \\
& =pa_1 [{}_A \la x_1 \otimes (1\otimes f)b_1 q, \, x_2 \otimes (1\otimes f)b_2 q \ra\otimes f]a_2^* p \\
& =pa_1 [{}_A \la x_1 \cdot {}_B \la (1\otimes f)b_1 q, \, (1\otimes f)b_2 q \ra, \, x_2 \ra \otimes f]a_2^* p \\
& =pa_1 [{}_A \la x_1 \cdot (1\otimes f)b_1 qb_2^* (1\otimes f), \, x_2 \ra \otimes f]a_2^* p .
\end{align*}
On the other hand,
\begin{align*}
& {}_{pM_k (A)p} \la pa_1 \cdot (x_1 \otimes f)\cdot b_1 q, \, pa_2 \cdot (x_1 \otimes f)\cdot b_2 q \ra \\
& =pa_1 (1\otimes f) {}_{M_k (A)} \la (x_1 \otimes f)\cdot b_1 q , \, (x_2 \otimes f)\cdot b_2 q \ra (1\otimes f)a_2^* p \\
& =pa_1 [ {}_A \la x_1 \cdot (1\otimes f)b_1 qb_2^* (1\otimes f), \, x_2 \ra \otimes f]a_2^* p .
\end{align*}
Hence $\Phi$ preserves the left $pM_k (A)p$-valued inner products.
Also,
\begin{align*}
& \la pa_1 (1\otimes f)\otimes x_1 \otimes (1\otimes f)b_1 q , \, pa_2 (1\otimes f)\otimes x_2 \otimes (1\otimes f)b_2 q \ra_{qM_k (B)q} \\
& =\la x_1 \otimes (1\otimes f)b_1 q , \, \la pa_1 (1\otimes f), \, pa_2 (1\otimes f) \ra_A \cdot x_2 \otimes (1\otimes f)b_2 q \ra_{qM_k (B)q} \\
& =\la x_1 \otimes (1\otimes f)b_1 q, \, (1\otimes f)a_1^* pa_2 (1\otimes f)\cdot x_2 \otimes (1\otimes f)b_2 q \ra_{qM_k (B)q} \\
& =\la (1\otimes f)b_1 q, \, [\la x_1 , \, (1\otimes f)a_1^*pa_2 (1\otimes f)\cdot x_2 \ra_B \otimes f]b_2 q \ra_{qM_k (B)q} \\
& =qb_1^* (1\otimes f) [\la x_1 , \, (1\otimes f)a_1^* pa_2 (1\otimes f)\cdot x_2 \ra_B \otimes f]b_2 q \\
& =qb_1^* [\la x_1 , \,  (1\otimes f)a_1^* pa_2 (1\otimes f)\cdot x_2 \ra_B\otimes f]b_2 q .
\end{align*}
On the other hand,
\begin{align*}
& \la pa_1 \cdot (x_1 \otimes f)\cdot b_1 q, \, pa_2 \cdot (x_2 \otimes f)\cdot b_2 q \ra_{qM_k (B)q} \\
& =qb_1^* (1\otimes f)\la pa_1 \cdot (x_1 \otimes f), \, pa_2 \cdot (x_2 \otimes f) \ra_{M_k (B)}(1\otimes f)b_2 q \\
& =qb_1^* [\la x_1 , \, (1\otimes f)a_1^* pa_2 (1\otimes f)\cdot x_2 \ra_B \otimes f]b_2 q .
\end{align*}
Thus $\Phi$ preserves the right $qM_k (B)q$-valued inner products.
Furthermore, let $\{f_{ij}\}_{i,j=1}^k$ be a system of matrix units of $M_k (\BC)$.
Then since $f=f_{11}$, for any $x\in X$ and $i, j=1,2,\dots,k$,
\begin{align*}
p(1\otimes f_{i1})\otimes x\otimes (1\otimes f_{1j})q & =p(1\otimes f_{i1})(1\otimes f)\otimes x\otimes (1\otimes f)(1\otimes f_{1j})q \\
& \in pM_k (A)(1\otimes f)\otimes_A X\otimes_B (1\otimes f)M_k (B)q .
\end{align*}
Then by the definition of $p\cdot M_k (X)\cdot q$, for $i, j=1,2,\dots,k$,
$$
\Phi(p(1\otimes f_{i1})\otimes x\otimes (1\otimes f_{1j})q) =p(1\otimes f_{i1})\cdot (x\otimes f)\cdot (1\otimes f_{1j})q
=p\cdot (x\otimes f_{ij})\cdot q .
$$
This means that $\Phi$ is surjective. Therefore, we obtain the conclusion.
\end{proof}

\begin{cor}\label{cor:bimodule2}With the above notations,
$$
pM_k (C)(1\otimes f)\otimes _C Y\otimes_D (1\otimes f)M_k (D)q\cong p\cdot M_k (Y)\cdot q
$$
as $pM_k (C)p-qM_k (D)q$-equivalence bimodules. Hence $Y_2 \cong p\cdot M_k (Y)\cdot q$
as $C_2 -D_2$-equivalence bimodules, where we identify $pM_k (C)p$ and $qM_k (D)q$ with
$C_2$ and $D_2$, respectively.
\end{cor}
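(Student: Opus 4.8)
The plan is to mimic the proof of Lemma \ref{lem:bimodule3} line by line, replacing the triple $(A,B,X)$ by $(C,D,Y)$ throughout; indeed $Y$ is a $C-D$-equivalence bimodule in exactly the same fashion that $X$ is an $A-B$-equivalence bimodule, and the projections $p$ and $q$ are full in $M_k(C)$ and $M_k(D)$ as well (this is recorded in the discussions of Section \ref{sec:definition}, via the isomorphisms $\Psi_{C_2}$ and $\Psi_{D_2}$ onto $pM_k(C)p$ and $qM_k(D)q$). Concretely, I would define the map $\Phi$ from $pM_k(C)(1\otimes f)\otimes_C Y\otimes_D (1\otimes f)M_k(D)q$ to $p\cdot M_k(Y)\cdot q$ by
$$
\Phi(pa(1\otimes f)\otimes y\otimes (1\otimes f)bq)=pa\cdot(y\otimes f)\cdot bq
$$
for any $a\in M_k(C)$, $b\in M_k(D)$ and $y\in Y$, where $M_k(Y)$ and $p\cdot M_k(Y)\cdot q$ carry their evident $M_k(C)-M_k(D)$- and $pM_k(C)p-qM_k(D)q$-equivalence bimodule structures.

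Next I would check the four properties established for $\Phi$ in Lemma \ref{lem:bimodule3}: that $\Phi$ is well-defined and a $pM_k(C)p-qM_k(D)q$-bimodule map, that it preserves the left $pM_k(C)p$-valued inner product, that it preserves the right $qM_k(D)q$-valued inner product, and that it is surjective. Each verification uses only the imprimitivity identities of the $C-D$-equivalence bimodule $Y$ --- for instance the ${}_C\la y_1\cdot{}_D\la\cdot,\cdot\ra,y_2\ra$-type manipulations and the matching of the two inner products through $y_1\cdot\la y_2,y_3\ra_D={}_C\la y_1,y_2\ra\cdot y_3$ --- which are formally identical to those of $X$ used in the lemma. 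The surjectivity is obtained exactly as before, by applying $\Phi$ to the elements $p(1\otimes f_{i1})\otimes y\otimes (1\otimes f_{1j})q$ for a system of matrix units $\{f_{ij}\}$, whose images $p\cdot(y\otimes f_{ij})\cdot q$ span $p\cdot M_k(Y)\cdot q$. Thus the computations of Lemma \ref{lem:bimodule3} carry over verbatim with $C,D,Y$ in place of $A,B,X$.

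Finally, the second assertion follows by combining this isomorphism with the identification $Y_2\cong pM_k(C)(1\otimes f)\otimes_C Y\otimes_D (1\otimes f)M_k(D)q$ as $C_2-D_2$-equivalence bimodules established just before Lemma \ref{lem:bimodule3}, together with the identifications $C_2\cong pM_k(C)p$ and $D_2\cong qM_k(D)q$. I do not anticipate a genuine obstacle here, since the argument is purely a transcription; the only point requiring a moment's care is to confirm that enlarging the coefficient algebras from $M_k(A),M_k(B)$ to $M_k(C),M_k(D)$ disturbs neither the fullness of $p$ and $q$ nor the matrix-level inner product formulas, both of which are already in place from the preceding construction.
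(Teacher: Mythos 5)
Your proposal is correct and matches the paper's intent exactly: the paper disposes of this corollary by declaring it immediate from the preceding lemma, which is precisely your verbatim transcription of the proof of Lemma \ref{lem:bimodule3} with $(C,D,Y)$ in place of $(A,B,X)$. No further comment is needed.
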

\begin{proof}
This is immediate by Lemma \ref{lem:bimodule}.
\end{proof}

By the above discussions, we can obtain the $C_1 -D_1$-equivalence bimodule isomorphism $\overline{\Phi_1}$
from $Y_2$ onto $p\cdot M_k (Y)\cdot q$ defined by
$$
\overline{\Phi_1}(c_1 \otimes y\otimes \widetilde{d_1})=[E^C(w_i^* c_1 )\cdot y\cdot E^D (d_1^* z_j )]_{i, j=1}^k
$$
for any $c_1 \in C_1$, $d_1 \in D_1$, $y\in Y$, where we identify $C_1$ and $D_1$ with
$pM_k (C)p$ and $qM_k (D)q$ by the isomorphisms defined above, respectively. Also, we can obtain
the $C-D$-equivalence bimodule isomorphism $\overline{\Phi}$ from $Y_1$ onto $p\cdot M_k (X)\cdot q$ defined by
$$
\overline{\Phi}(c\otimes x\otimes d) =[E^A (u_i^* c)\cdot x\cdot E^B (d^* v_j )]_{i, j=1}^k
$$
for any $c\in C$, $d\in D$, $x\in X$, where we identify $C$ and $D$
with $pM_k (A)p$ and $qM_k (B)q$ by the isomorphisms defined above, respectively.
\par
Let $E^{p\cdot M_k (X)\cdot q}$ be the conditional expectation from $p\cdot M_k (Y)\cdot q$
onto $p\cdot M_k (X)\cdot q$ defined by
$$
E^{p\cdot M_k (X)\cdot q}=(E^X \otimes \id_{M_k (\BC)})|_{p\cdot M_k (Y)\cdot q}
$$
with respect to conditional expectations induced by $E^A \otimes \id_{M_k (\BC)}$
and $E^B \otimes\id_{M_k (\BC)}$.

\begin{lemma}\label{lem:composition}With the above notations, we have
$$
E^{p\cdot M_k (X)\cdot q} \circ \overline{\Phi_1}=\overline{\Phi}\circ E^{Y_1} .
$$
\end{lemma}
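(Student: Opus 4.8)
The plan is to exploit that both composites in the claimed identity are maps $Y_2\to p\cdot M_k(X)\cdot q$ intertwining the $C_1$-$D_1$-bimodule structures, and then to verify the identity only on a convenient generating set. First I would note that $E^{Y_1}$ satisfies $E^{Y_1}(c_1\cdot\xi\cdot d_1)=c_1\cdot E^{Y_1}(\xi)\cdot d_1$ for $c_1\in C_1$, $d_1\in D_1$ by conditions (2) and (5) of Definition \ref{def:expectation}, while $\overline{\Phi}$ is an equivalence bimodule isomorphism intertwining the $C_1$- and $D_1$-actions (under $C_1\cong pM_k(A)p$, $D_1\cong qM_k(B)q$); hence $\overline{\Phi}\circ E^{Y_1}$ is a $C_1$-$D_1$-bimodule map. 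Likewise $\overline{\Phi_1}$ is such an isomorphism and $E^{p\cdot M_k(X)\cdot q}=(E^X\otimes\id_{M_k(\BC)})|$ satisfies the analogous module identities over $pM_k(A)p$ and $qM_k(B)q$, so $E^{p\cdot M_k(X)\cdot q}\circ\overline{\Phi_1}$ is also a $C_1$-$D_1$-bimodule map. Since every simple tensor equals $c_1\cdot(1\otimes y\otimes\widetilde{1})\cdot d_1^*$ and such tensors span a dense subspace of $Y_2$, and both composites are bounded, it suffices to prove the identity on the elements $1\otimes y\otimes\widetilde{1}$ with $y\in Y$.

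Next I would compute the left-hand side on such an element. By definition $\overline{\Phi_1}(1\otimes y\otimes\widetilde{1})=[E^C(w_i^*)\cdot y\cdot E^D(z_j)]_{i,j=1}^k$. Using $w_i=\Ind_W(E^A)^{\frac12}u_ie_A$, the $C$-bilinearity of $E^C$, the value $E^C(e_A)=\Ind_W(E^A)^{-1}$ read off from Lemma \ref{lem:EY}, and the centrality of $\Ind_W(E^A)\in A$, one gets $E^C(w_i^*)=\Ind_W(E^A)^{-\frac12}u_i^*$, and symmetrically $E^D(z_j)=\Ind_W(E^B)^{-\frac12}v_j$. Applying $(E^X\otimes\id)$ entrywise and pulling the central factors $\Ind_W(E^A)^{-\frac12}\in A$ and $\Ind_W(E^B)^{-\frac12}\in B$ outside $E^X$ (after commuting the latter past $v_j$) by conditions (2) and (5) of Definition \ref{def:expectation}, the left-hand side becomes $[\Ind_W(E^A)^{-\frac12}\,E^X(u_i^*\cdot y\cdot v_j)\,\Ind_W(E^B)^{-\frac12}]_{i,j=1}^k$.

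For the right-hand side I would use the defining formula of the dual conditional expectation, $E^{Y_1}(c_1\otimes y\otimes\widetilde{d_1})=\Ind_W(E^C)^{-1}c_1\cdot y\cdot d_1^*$, together with $\Ind_W(E^C)=\Ind_W(E^A)$ (which follows from the quasi-basis $\{w_i\}$ and the identity $\sum_i u_ie_Au_i^*=1$). Thus $E^{Y_1}(1\otimes y\otimes\widetilde{1})=\Ind_W(E^A)^{-1}\phi(y)$ with $\phi(y)=\sum_{i,j}u_i\otimes E^X(u_i^*\cdot y\cdot v_j)\otimes\widetilde{v_j}$. Feeding this into the formula for $\overline{\Phi}$ and collapsing the double sum by the quasi-basis identities $\sum_i E^A(u_a^*u_i)u_i^*=u_a^*$ and $\sum_j v_jE^B(v_j^*v_b)=v_b$ (again invoking conditions (2) and (5) to move the inner $A$- and $B$-valued coefficients through $E^X$), the right-hand side becomes $[\Ind_W(E^A)^{-1}\,E^X(u_i^*\cdot y\cdot v_j)]_{i,j=1}^k$.

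Finally I would reconcile the two expressions. Writing $w=E^X(u_i^*\cdot y\cdot v_j)\in X$, Lemma \ref{lem:index} gives $\Ind_W(E^A)^{\frac12}\cdot w=w\cdot\Ind_W(E^B)^{\frac12}$, the passage to square roots being legitimate by continuous functional calculus since the relation $\Ind_W(E^A)\cdot w=w\cdot\Ind_W(E^B)$ extends from polynomials to continuous functions of the invertible positive central indices. Hence $\Ind_W(E^A)^{-\frac12}\cdot w\cdot\Ind_W(E^B)^{-\frac12}=\Ind_W(E^A)^{-1}\cdot w$, so the two matrices agree entrywise; the identity therefore holds on the generators, and by the bimodule-map reduction of the first paragraph it holds on all of $Y_2$. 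The main obstacle is the index bookkeeping in the two middle steps, namely correctly evaluating $E^C(w_i^*)$, $E^D(z_j)$ and the scalar $\Ind_W(E^C)^{-1}$, and then matching the split form $\Ind_W(E^A)^{-\frac12}(\,\cdot\,)\Ind_W(E^B)^{-\frac12}$ against $\Ind_W(E^A)^{-1}(\,\cdot\,)$ via the square-root version of Lemma \ref{lem:index}.
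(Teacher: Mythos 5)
Your proof is correct and follows essentially the same route as the paper's: both come down to evaluating $E^C(w_i^*)=\Ind_W(E^A)^{-\frac{1}{2}}u_i^*$ and $E^D(z_j)=v_j\Ind_W(E^B)^{-\frac{1}{2}}$, collapsing the quasi-basis sums through $E^X$ via conditions (2) and (5) of Definition \ref{def:expectation}, and reconciling $\Ind_W(E^A)^{-\frac{1}{2}}(\,\cdot\,)\Ind_W(E^B)^{-\frac{1}{2}}$ with $\Ind_W(E^A)^{-1}(\,\cdot\,)$ via Lemma \ref{lem:index}. The only differences are cosmetic: you first reduce to the generators $1\otimes y\otimes\widetilde{1}$ by bimodularity where the paper computes directly on $c_2e_Ac_3\otimes y\otimes\widetilde{d_2e_Bd_3}$, and you explicitly justify the square-root form of Lemma \ref{lem:index} by functional calculus, a step the paper uses only tacitly.
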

\begin{proof}
We can prove this lemma by routine computations. Indeed, for any $c_1 \in C_1$, $d_1 \in D_1$, $y\in Y$,
\begin{align*}
(E^{p\cdot M_k (X)\cdot q}\circ \overline{\Phi_1})(c_1 \otimes y\otimes \widetilde{d_1}) 
& =E^{p\cdot M_k (X)\cdot q}([E^C (w_i^* c_1 )\cdot y\cdot E^D (d_1^* z_j )]_{i, j=1}^k ) \\
& =[E^X (E^C (w_i^* c_1 )\cdot y\cdot E^D (d_1^* z_j )]_{i, j=1}^k .
\end{align*}
Let $c_1 =c_2 e_A c_3$, \, $c_2 , c_3 \in C$ and $d_1 =d_2 e_B d_3 $,  \, $d_2 , d_3 \in D$.
We note that for any $i, j=1,2,\dots,k$,
$$
w_i =u_i e_A \Ind_W (E^A )^{\frac{1}{2}} , \quad z_j =v_j e_B \Ind_W (E^B )^{\frac{1}{2}}.
$$
Hence
\begin{align*}
& [E^X (E^C (w_i^* c_1 )\cdot y\cdot E^D (d_1^* z_j ))]_{i, j=1}^k \\
& =[E^X (E^C (\Ind_W (E^A )^{\frac{1}{2}}e_A u_i^* c_2 e_A c_3 )\cdot y\cdot
E^D (d_3^* e_B d_2^* v_j e_B \Ind_W (E^B )^{\frac{1}{2}}))]_{ij}^k \\
& =[E^X (\Ind_W (E^A )^{-\frac{1}{2}}E^A (u_i^* c_2 )c_3 \cdot y\cdot
d_3^* \,E^B (d_2^* v_j )\Ind_W (E^B )^{-\frac{1}{2}})]_{ij=1}^k \\
& =[\Ind_W (E^A )^{-\frac{1}{2}}E^A (u_i^* c_2 )\cdot E^X (c_3 \cdot y\cdot d_3^* )
\cdot E^B (d_2^* v_j )\Ind_W (E^B )^{-\frac{1}{2}}]_{ij=1}^k \\
& =\Ind_W (E^A )^{-1}[E^A (u_i^* c_2 )\cdot E^X (c_3 \cdot y\cdot d_3^* )\cdot E^B (d_2^* v_j )]_{ij=1}^k
\end{align*}
by Lemma \ref{lem:index}. On the other hand,
\begin{align*}
E^{Y_1} (c_1 \otimes y\otimes\widetilde{d_1} ) & =\Ind_W (E^A )^{-1}c_1 \cdot y\cdot d_1^* =\Ind_W (E^A )^{-1}c_1 \cdot\phi(y)\cdot d_1^* \\
& =\sum_{i, j}\Ind_W (E^A )^{-1}c_1 \cdot u_i \otimes E^X (u_i^* \cdot y\cdot v_j )\otimes \widetilde{v_j}\cdot d_1^* 
\end{align*}
Since $c_1 =c_2 e_A c_3 $ and $d_1 =d_2 e_B d_3 $,
$$
E^{Y_1} (c_1 \otimes y\otimes\widetilde{d_1})=\sum_{i, j}\Ind_W (E^A )^{-1}c_2 E^A (c_3 u_i )\otimes E^X (u_i^* \cdot y\cdot v_j )
\otimes [d_2 E^B (d_3 v_j )]^{\widetilde{}} .
$$
Hence
\begin{align*}
& (\overline{\Phi}\circ E^{Y_1})(c_1 \otimes y\otimes \widetilde{d_1}) \\
& =\sum_{i, j}\Ind_W (E^A )^{-1}[(E^A (u_l^* c_2 E^A (c_3 u_i ))\cdot E^X (u_i^* \cdot y\cdot v_j )\cdot
E^B (E^B (v_j^* d_3^* )d_2^* v_m )]_{l, m=1}^k \\
& =\sum_{i, j}\Ind_W (E^A )^{-1}[E^A (u_l^* c_2 )E^A (c_3 u_i )\cdot E^X (u_i^* \cdot y\cdot v_j )
\cdot E^B (v_j^* d_3^* )E^B (d_2^* v_m )]_{l, m=1}^k \\
& =\sum_{i, j}\Ind_W (E^A )^{-1}[E^A (u_l^* c_2 )\cdot E^X (E^A (c_3 u_i )u_i^* \cdot y\cdot v_j E^B (v_j^* d_3^* ))\cdot
E^B (d_2^* v_m )]_{l, m=1}^k \\
& =\Ind_W (E^A )^{-1}[E^A (u_l^* c_2 )\cdot E^X (c_3 \cdot y \cdot d_3^* )\cdot E^B (d_2^* v_m )]_{l, m=1}^k .
\end{align*}
Therefore, we obtain the conclusion.
\end{proof}

\begin{thm}\label{thm:duality}Let $A\subset C$ and $B\subset D$ be
unital inclusions of unital $C^*$-algebras, which are strongly Morita equivalent with
respect to a $C-D$-equivalence bimodule $Y$ and its closed subspace $X$.
Let $E^A$ and $E^B$ be conditional expectations of Watatani index-finite type from
$C$ and $D$ onto $A$ and $B$, respectively and let $E^X$ be a conditional expectation
from $Y$ onto $X$ with respect to $E^A$ and $E^B$.
Let $C_1$, $D_1$ and $Y_1$ be the $C^*$-
basic constructions and the upward basic construction for $E^A$, $E^B$ and $E^X$, respectively.
Also, let $E^C$, $E^D$ and $E^Y$
be the dual conditional expectations from $C_1$, $D_1$ and $Y_1$ onto $C$, $D$ and $Y$,
respectively. Furthermore, in the same way as above, we define the $C^*$-basic constructions and the upward basic
constructions $C_2$, $D_2$ and $Y_2$ for $E^C$, $E^D$ and $E^Y$, respectively and
we define the second dual conditional expectations $E^{C_1}$, $E^{D_1}$ and $E^{Y_1}$,
respectively. Then there are a positive integer $k$ and full projections $p\in M_k (A)$ and $q\in M_k (B)$
with
\begin{align*}
& pM_k (A)p\cong C_1 , \quad qM_k (B)q\cong D_1 , \\
& pM_k (C)p\cong C_2 , \quad qM_k (D)q\cong D_2
\end{align*}
such that there are a $C_1 -D_1$-eqivalence bimodule isomorphism $\overline{\Phi}$ of
$Y_1$ onto $p\cdot M_k (X)\cdot q$ and a $C_2 -D_2$-equivalence bimodule
isomorphism $\overline{\Phi_1}$ of $Y_2$ onto $p\cdot M_k (Y)\cdot q$ satifying
that
$$
E^{p\cdot M_k (X)\cdot q}\circ\overline{\Phi_1}=\overline{\Phi}\circ E^{Y_1}
$$
where $E^{p\cdot M_k (X)\cdot q}$ is the conditional expectation from $p\cdot M_k(Y)\cdot q$
onto $p\cdot M_k (X)\cdot q$ defined by
$$
E^{p\cdot M_k (X)\cdot q}=(E^X \otimes\id_{M_k (\BC)})|_{p\cdot M_k (Y)\cdot q} \,.
$$
\end{thm}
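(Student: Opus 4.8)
The plan is to read off the theorem from the apparatus already assembled in this section, since each of its assertions corresponds to a construction or lemma proved above. First I would take $k$ to be the common number of elements in the chosen quasi-bases $\{(u_i, u_i^*)\}_{i=1}^k$ for $E^A$ and $\{(v_i, v_i^*)\}_{i=1}^k$ for $E^B$, and set $p=[E^A(u_i^* u_j)]_{i,j=1}^k\in M_k(A)$ and $q=[E^B(v_i^* v_j)]_{i,j=1}^k\in M_k(B)$. By the discussion of Section \ref{sec:definition}, which rests on the proof of Rieffel \cite[Proposition 2.1]{Rieffel:rotation}, both $p$ and $q$ are full projections.

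Next I would record the four algebra identifications. The maps $\Psi_{C_1}$ and $\Psi_{D_1}$ are isomorphisms of $C_1$ onto $pM_k(A)p$ and of $D_1$ onto $qM_k(B)q$, while $\Psi_{C_2}$ and $\Psi_{D_2}$ are isomorphisms of $C_2$ onto $pM_k(C)p$ and of $D_2$ onto $qM_k(D)q$; these give the four displayed isomorphisms in the statement. Under these identifications, Lemma \ref{lem:bimodule3} produces the $C_1-D_1$-equivalence bimodule isomorphism $Y_1\cong p\cdot M_k(X)\cdot q$, and Corollary \ref{cor:bimodule2} the $C_2-D_2$-equivalence bimodule isomorphism $Y_2\cong p\cdot M_k(Y)\cdot q$. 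The maps $\overline{\Phi}$ and $\overline{\Phi_1}$ written out explicitly before Lemma \ref{lem:composition} are exactly these two isomorphisms, so I would take them as the required $\overline{\Phi}$ and $\overline{\Phi_1}$.

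Finally, the intertwining relation $E^{p\cdot M_k(X)\cdot q}\circ\overline{\Phi_1}=\overline{\Phi}\circ E^{Y_1}$ is precisely the content of Lemma \ref{lem:composition}, so the proof of the theorem reduces to citing that lemma together with the identifications above.

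The genuine work here lies in the preceding lemmas rather than in the theorem itself; the only point requiring care in the assembly is the mutual compatibility of the identifications. One must check that $\overline{\Phi}$ is indeed a $C_1-D_1$-equivalence bimodule map once $C_1$ and $D_1$ are identified with $pM_k(A)p$ and $qM_k(B)q$ via $\Psi_{C_1}$ and $\Psi_{D_1}$, and likewise that $\overline{\Phi_1}$ is a $C_2-D_2$-equivalence bimodule map under $\Psi_{C_2}$ and $\Psi_{D_2}$. This is a matter of transporting the module actions and both-sided inner products through the quasi-basis expansions $c=\sum_i u_i E^A(u_i^* c)$ and $d=\sum_j v_j E^B(v_j^* d)$, the bookkeeping-heavy but routine computations already carried out in Lemmas \ref{lem:bimodule3} and \ref{lem:composition}. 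Granting those, the theorem is immediate.
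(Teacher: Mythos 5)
Your proposal is correct and follows exactly the route the paper takes: the paper's own proof is simply ``This is immediate by Lemmas \ref{lem:bimodule3}, \ref{lem:composition} and Corollary \ref{cor:bimodule2},'' relying on the identifications $\Psi_{C_1},\Psi_{D_1},\Psi_{C_2},\Psi_{D_2}$ and the explicit maps $\overline{\Phi}$, $\overline{\Phi_1}$ set up earlier in the section, just as you describe. Your additional remark about checking compatibility of the identifications is the right place to locate the remaining routine work, and it is indeed carried out in the cited lemmas.
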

\begin{proof}This is immediate by Lemmas \ref{lem:bimodule}, \ref{lem:composition} and Corollary \ref{cor:bimodule2}.
\end{proof}

\section{The downward basic construction}\label{sec:downward}
Let $A\subset C$ and $B\subset D$ be unital inclusions of unital $C^*$-algebras which are strongly Morita equivalent
with respect to a $C-D$-equivalence bimodule $Y$ and its closed subspace $X$. Let $E^A$ and $E^B$ be
conditional expectations of Watatani index-finite type from $C$ and $D$ onto $A$ and $B$, respectively.
Let $E^X$ be a conditional expectation from $Y$ onto $X$ with respect to $E^A$ and $E^B$.
We suppose that $\Ind_W (E^A )\in A$. Then by Lemma \ref {lem:index2}, $\Ind_W (E^B )\in B$. Also,
we suppose that there are full projections $p$ and $q$ in $C$ and $D$ satisfying that
$$
E^A (p)=\Ind_W (E^A )^{-1} , \quad E^B (q)=\Ind_W (E^B )^{-1} ,
$$
respectively. Then by \cite [Proposition 2.6]{OKT:rohlininclusion},
we obtain the following: Let $P=\{p\}' \cap A$ and let $E^P$
be the conditional expectation from $A$ onto $P$ defined by
$$
E^P (a)=\Ind_W (E^A )E^A (pap)
$$
for any $a\in A$. Similarly, let $Q=\{q\}' \cap B$ and let $E^Q$ be the conditional
expectation from $B$ onto $Q$ defined by
$$
E^Q (b)=\Ind_W (E^B )E^B (qbq)
$$
for any $b\in B$. Then
$\Ind_W (E^P )=\Ind_W (E^A )\in P\cap C'$ and $\Ind_W (E^Q )=\Ind_W (E^B )\in Q\cap D'$.
Furthermore, we can see that
\begin{align*}
ApA & =C, \quad BqB=D , \\
pap & =E^P (a), \quad qbq=E^Q (b)
\end{align*}
for any $a\in A$ and $b\in B$. Also, the unital inclusions $A\subset C$ and $B\subset D$
can be regarded as the $C^*$-basic constructions of the unital inclusions $P\subset A$ and $Q\subset B$, respectively.
In this section, we shall show that the unital inclusions $P\subset A$ and $Q\subset B$
are strongly Morita equivalent and that there is a conditional expectation from $X$ onto
its closed subspace with respect to $E^P$ and $E^Q$.
\par
Let $Z=\{x\in X \, | \, p\cdot x=x\cdot q \}$. Then $Z$ is a closed subspace of $X$.

\begin{lemma}\label{lem:subspace}With the above notations, $Z$ is a Hilbert
$P-Q$-bimodule in the sense of Brown, Mingo and Shen \cite{BMS:quasi}.
\end{lemma}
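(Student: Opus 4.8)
The plan is to equip $Z$ with the left $P$-valued and right $Q$-valued inner products obtained by restricting the inner products of the $A-B$-equivalence bimodule $X$, and then to show that every Hilbert-bimodule axiom is inherited from $X$. First I would check that the left $P$-action and the right $Q$-action on $X$ preserve $Z$. For $a\in P=\{p\}'\cap A$ and $z\in Z$ one has, using $pa=ap$ and $p\cdot z=z\cdot q$,
\[
p\cdot(a\cdot z)=(pa)\cdot z=(ap)\cdot z=a\cdot(p\cdot z)=a\cdot(z\cdot q)=(a\cdot z)\cdot q,
\]
so that $a\cdot z\in Z$; symmetrically, for $b\in Q=\{q\}'\cap B$ and $z\in Z$, using $qb=bq$, one gets $p\cdot(z\cdot b)=(z\cdot q)\cdot b=z\cdot(qb)=z\cdot(bq)=(z\cdot b)\cdot q$, so $z\cdot b\in Z$. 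Since $X$ is closed under the $A$- and $B$-actions, $a\cdot z$ and $z\cdot b$ lie in $X$, hence in $Z$.

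Next, and this is the heart of the argument, I would show that for $z,w\in Z$ one has ${}_A\la z,w\ra\in P$ and $\la z,w\ra_B\in Q$, so that the restricted inner products really are $P$- and $Q$-valued. For the left one, recall that ${}_A\la z,w\ra={}_C\la z,w\ra$, and compute in $C$ using the module identities ${}_C\la c\cdot x,y\ra=c\,{}_C\la x,y\ra$, ${}_C\la x\cdot d,y\ra={}_C\la x,y\cdot d^*\ra$, together with $p=p^*$, $q=q^*$ and $p\cdot z=z\cdot q$, $w\cdot q=p\cdot w$:
\[
p\,{}_C\la z,w\ra={}_C\la p\cdot z,w\ra={}_C\la z\cdot q,w\ra={}_C\la z,w\cdot q\ra={}_C\la z,p\cdot w\ra={}_C\la z,w\ra\,p.
\]
Thus ${}_A\la z,w\ra$ commutes with $p$ and lies in $\{p\}'\cap A=P$. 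An entirely analogous computation with the $D$-valued inner product, using $\la z,w\ra_B=\la z,w\ra_D$ and the adjointability of the left $C$-action with respect to $\la\cdot,\cdot\ra_D$, gives $\la z,w\ra_D\,q=q\,\la z,w\ra_D$, whence $\la z,w\ra_B\in\{q\}'\cap B=Q$.

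Finally I would assemble the Hilbert-bimodule structure by setting ${}_P\la z,w\ra:={}_A\la z,w\ra$ and $\la z,w\ra_Q:=\la z,w\ra_B$. These maps are sesquilinear and positive because they are restrictions of the inner products of $X$; the two norms they induce on $Z$ both coincide with the norm inherited from $X$, since $\|{}_A\la z,z\ra\|=\|z\|^2=\|\la z,z\ra_B\|$, and $Z$, being a closed subspace of the complete space $X$, is complete. Hence $Z$ is simultaneously a left Hilbert $P$-module and a right Hilbert $Q$-module. The compatibility condition ${}_P\la z,w\ra\cdot u=z\cdot\la w,u\ra_Q$ for $z,w,u\in Z$ is immediate from the corresponding identity ${}_A\la z,w\ra\cdot u=z\cdot\la w,u\ra_B$ in the equivalence bimodule $X$, both sides lying in $Z$ by the first paragraph. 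This shows that $Z$ is a Hilbert $P-Q$-bimodule in the sense of Brown, Mingo and Shen. The only genuine obstacle is the verification in the middle paragraph that the inner products take values in the relative commutants $P$ and $Q$; once that is established, every remaining property descends directly from $X$.
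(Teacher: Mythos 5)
Your proposal is correct and follows essentially the same route as the paper: verify that the $P$- and $Q$-actions preserve $Z$, show via the identity $p\cdot z=z\cdot q$ that the inner products ${}_A\la z,w\ra$ and $\la z,w\ra_B$ commute with $p$ and $q$ respectively and hence land in $P$ and $Q$, and then let the remaining axioms descend from $X$ together with closedness of $Z$. The paper's proof is just a terser version of the same computations.
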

\begin{proof}
This lemma can be proved by routine computations. Indeed,
for any $a\in P$, $x\in Z$,
$$
p\cdot (a\cdot x)=pa\cdot x=a\cdot (p\cdot x)=a\cdot (x\cdot q)=(a\cdot x)\cdot q .
$$
Hence $a\cdot x\in Z$ for any $a\in P$, $x\in Z$. Similarly for any $b\in Q$, $x\in Z$, $x\cdot b\in Z$.
For any $x, y\in Z$,
$$
p\cdot {}_A \la x, y \ra ={}_C \la p\cdot x , y \ra ={}_C \la x\cdot q , y \ra ={}_C \la x, p\cdot y \ra 
={}_A \la x, y \ra \cdot p .
$$
Hence ${}_A \la x, y \ra \in P$ for any $x, y\in Z$. Similarly for any $x, y\in Z$, $\la x, y \ra_A\in Q$.
Since $Z$ is a closed subspace of the $A-B$-equivalence bimodule $X$, $Z$ is a Hilbert
$P-Q$-bimodule in the sense of Brown, Mingo and Shen \cite {BMS:quasi}.
\end{proof}

Let $E^Z$ be the linear map from $X$ to $Z$ defined by
$$
E^Z (x)=\Ind_W (E^A )\cdot E^X (p\cdot x\cdot q)
$$
for any $x\in X$. We note that
$$
E^Z (x)=E^X (p\cdot x\cdot q)\cdot\Ind_W (E^B )
$$
for any $x\in X$ by Lemma \ref{lem:index}.

\begin{lemma}\label{lem:satisfies}With the above notations, $E^Z$ satisfies Conditions (1)-(6)
in Definition \ref{def:expectation}.
\end{lemma}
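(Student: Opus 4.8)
The plan is to verify, one at a time, the six conditions of Definition \ref{def:expectation} for the data in which $P\subset A$ and $Q\subset B$ play the roles of the two inclusions, $X$ (regarded as the $A-B$-equivalence bimodule) plays the role of $Y$, $Z$ (the Hilbert $P-Q$-bimodule of Lemma \ref{lem:subspace}) plays the role of its closed subspace, and $E^P$, $E^Q$, $E^Z$ play the roles of $E^A$, $E^B$, $E^X$. Throughout I will use the two expressions $E^Z(x)=\Ind_W(E^A)\cdot E^X(p\cdot x\cdot q)=E^X(p\cdot x\cdot q)\cdot\Ind_W(E^B)$ (the second coming from Lemma \ref{lem:index}), the centrality $\Ind_W(E^A)\in C\cap C'$ and $\Ind_W(E^B)\in D\cap D'$ together with their self-adjointness, the identities $pap=E^P(a)$, equivalently $E^A(pap)=\Ind_W(E^A)^{-1}E^P(a)$, and $qbq=E^Q(b)$, and above all the defining relation $p\cdot x=x\cdot q$ valid for $x\in Z$. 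The six properties of $E^X$ in Definition \ref{def:expectation}, and the note following it, are the only structural inputs about $E^X$.

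First I would treat the two left-module identities. For Condition (1) I expand $E^Z(a\cdot x)=\Ind_W(E^A)E^X(p\cdot a\cdot x\cdot q)$ for $a\in A$, $x\in Z$; applying $p\cdot x=x\cdot q$ to the rightmost factor rewrites $p\cdot a\cdot x\cdot q$ as $(pap)\cdot x$ with $pap\in C$ and $x\in X$, so Condition (1) for $E^X$ gives $E^A(pap)\cdot x$, and the normalization $\Ind_W(E^A)E^A(pap)=E^P(a)$ yields $E^Z(a\cdot x)=E^P(a)\cdot x$. For Condition (2), with $b\in P=\{p\}'\cap A$ I use $pb=bp$ to move $b$ to the left of $p$ in $E^X(p\cdot b\cdot y\cdot q)$, apply Condition (2) for $E^X$ (since $b\in A$), and then commute the central $\Ind_W(E^A)$ past $b$ to obtain $b\cdot E^Z(y)$.

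Next I would handle the inner-product identity (3) and, symmetrically, the three right-hand conditions (4)--(6). For Condition (3), since ${}_C \la \cdot,\cdot \ra$ and ${}_A \la \cdot,\cdot \ra$ agree on $X$, I compute $E^P({}_A \la y,x \ra)=\Ind_W(E^A)E^A(p\,{}_C \la y,x \ra\,p)$ and use $p\cdot{}_C \la y,x \ra\cdot p={}_C \la p\cdot y,p\cdot x \ra$. Replacing $p\cdot x$ by $x\cdot q$ and moving $q$ across the inner product via ${}_C \la \eta,\xi\cdot d \ra={}_C \la \eta\cdot d^*,\xi \ra$ turns this into ${}_C \la p\cdot y\cdot q,x \ra$ with $x\in X$; the note after Definition \ref{def:expectation} then gives ${}_A \la E^X(p\cdot y\cdot q),x \ra$, and absorbing the central $\Ind_W(E^A)$ into the first slot produces ${}_A \la E^Z(y),x \ra$. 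Conditions (4), (5) are the right-module mirror of (1), (2): using the second form of $E^Z$ and $p\cdot x=x\cdot q$ I reduce $p\cdot(x\cdot b)\cdot q$ to $x\cdot(qbq)$ and invoke Condition (4) for $E^X$ together with $\Ind_W(E^B)E^B(qbq)=E^Q(b)$; for (5) I use $bq=qb$ for $b\in Q$ and Condition (5) for $E^X$. Condition (6) mirrors (3): $q\la y,x \ra_D\,q=\la y\cdot q,x\cdot q \ra_D=\la p\cdot y\cdot q,x \ra_D$ by the $D$-valued adjointness relations and $x\cdot q=p\cdot x$, after which the note gives $\la E^X(p\cdot y\cdot q),x \ra_B$, and the self-adjoint central $\Ind_W(E^B)$ is absorbed into $E^Z(y)$.

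The main obstacle is bookkeeping rather than conceptual: the elements $p\cdot x$ and $x\cdot q$ generally leave $X$, so none of the six properties of $E^X$, each of which requires one of its two arguments to lie in $X$, can be applied to $E^X(p\cdot y\cdot q)$ directly. The single identity $p\cdot x=x\cdot q$ valid on $Z$ is exactly what lets me reshuffle the projections so that, in every computation, precisely one factor remains in $X$ and the matching condition for $E^X$ becomes available. The remaining care is to keep straight which inner product is in play (the $A$- versus $P$-valued on the left, the $B$- versus $Q$-valued on the right, and their identification with the $C$- and $D$-valued ones on $X$) and to use the centrality and self-adjointness of $\Ind_W(E^A)$ and $\Ind_W(E^B)$ whenever an index must be commuted past a module element or past an inner product. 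Once these are in hand, each of the six verifications collapses to a short manipulation.
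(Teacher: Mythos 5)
Your proposal is correct and follows essentially the same route as the paper: expand $E^Z$ via its defining formula (and its second form from Lemma \ref{lem:index}), use $p\cdot x=x\cdot q$ on $Z$ to collect the projections into $pap$, $qbq$, or $p\cdot y\cdot q$ so that the corresponding condition for $E^X$ applies, and finish with $\Ind_W(E^A)E^A(pap)=E^P(a)$, its $Q$-analogue, and the centrality of the indices. The paper writes out (1), (3), (5) and declares (2), (4), (6) similar; your sketch of all six matches those computations.
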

\begin{proof}
For any $a\in A$, $z\in Z$,
\begin{align*}
E^Z (a\cdot z) & =\Ind_W (E^A )\cdot E^X (p\cdot (a\cdot z)\cdot q)
=\Ind_W (E^A )\cdot E^X (pa\cdot z\cdot q) \\
& =\Ind_W (E^A )\cdot E^X (pap\cdot z)=\Ind_W (E^A )E^A (pap)\cdot z =E^P (a)\cdot z .
\end{align*}
Hence $E^Z$ satisfies Condition (1) in Definition \ref{def:expectation}.
Similarly $E^Z$ satisfies Condition (4) in Definition \ref{def:expectation}.
For any $b\in Q$, $x\in X$,
\begin{align*}
E^Z (x\cdot b) & =\Ind_W (E^A )\cdot E^X (p\cdot (x\cdot b)\cdot q)=\Ind_W (E^A )\cdot E^X (p\cdot x\cdot qb) \\
& =\Ind_W (E^A )\cdot E^X (p\cdot x\cdot q)\cdot b=E^Z (x)\cdot b .
\end{align*}
Hence $E^Z$ satisfies Condition (5) in Definition \ref{def:expectation}. Similarly $E^Z$
satisfies Condition (2) in Definition \ref{def:expectation}. For any $x\in X$, $z\in Z$,
\begin{align*}
{}_P \la E^Z (x), \, z \ra & = {}_A \la \Ind_W (E^A )\cdot E^X (p\cdot x\cdot q), \, z \ra
=\Ind_W (E^A ){}_A \la E^X (p\cdot x\cdot q), \, z \ra \\
& =\Ind_W (E^A )E^A ({}_A \la p\cdot x\cdot q, \, z \ra )
=\Ind_W (E^A )E^A (p{}_A \, \la x, \, z\cdot q \ra ) \\
& =\Ind_W (E^A )E^A (p{}_A \, \la x, \, p\cdot z \ra )
=\Ind_W (E^A )E^A (p{}_A \, \la x, z \ra p) \\
& =E^P ({}_A \la x, z \ra ) .
\end{align*}
Hence $E^Z$ satisfies Condition (3) in Definition \ref{def:expectation}.
Also, in the same way as above, by Lemma \ref{lem:index},
we can see that $E^Z$ satisfies Condition (6) in Definition \ref{def:expectation}.
\end{proof}

\begin{lemma}\label{lem:innerproduct}With the above notations,
${}_A \la X, Z \ra=A$, $\la X, Z \ra_B =B$.
\end{lemma}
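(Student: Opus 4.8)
The plan is to deduce both equalities from the stronger, term-wise inclusions $\la X,X\ra_B\subseteq\la X,Z\ra_B$ and ${}_A\la X,X\ra\subseteq{}_A\la X,Z\ra$. Since $X$ is a full $A$--$B$-equivalence bimodule, these give at once $\la X,Z\ra_B=\la X,X\ra_B=B$ and ${}_A\la X,Z\ra={}_A\la X,X\ra=A$. The two inclusions are proved by the same computation applied to opposite sides, so I would carry out the right-hand one in full and only indicate the mirror modifications for the left-hand one.

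First I would record the structural facts to be used. Since $A\subset C$ and $B\subset D$ are the $C^*$-basic constructions of $P\subset A$ and $Q\subset B$ with $p$, $q$ in the role of the Jones projections, fix quasi-bases $\{(\mu_i,\mu_i^*)\}$ and $\{(\eta_j,\eta_j^*)\}$ for $E^P$ and $E^Q$ and use the adapted quasi-bases $u_i=\Ind_W(E^A)^{\frac12}\mu_i p$ and $v_j=\Ind_W(E^B)^{\frac12}\eta_j q$ for $E^A$ and $E^B$. The ingredients are: $E^X(p\cdot\xi\cdot q)=\Ind_W(E^A)^{-1}E^Z(\xi)$ for $\xi\in X$ (the definition of $E^Z$); $E^Z(\xi)\in Z$ with $p\cdot E^Z(\xi)=E^Z(\xi)\cdot q$ and $Z\cdot Q\subseteq Z$ (Lemmas \ref{lem:subspace} and \ref{lem:satisfies}); the centrality relation $\Ind_W(E^A)\cdot w=w\cdot\Ind_W(E^B)$ for $w\in Y$ (Lemma \ref{lem:index}); and the resolution $\Ind_W(E^A)\cdot y=\sum_{i,j}u_i\cdot E^X(u_i^*\cdot y\cdot v_j)\cdot v_j^*$, obtained from Lemma \ref{lem:basis} together with $\sum_i u_iu_i^*=\Ind_W(E^A)$.

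The heart of the argument is the following computation. For $x,y\in X$ I would expand $\la x,\Ind_W(E^A)\cdot y\ra_D$ by substituting the resolution of $y$ and moving the quasi-basis elements across the inner product. With the adapted bases one has $u_i^*\cdot x=\Ind_W(E^A)^{\frac12}p\cdot(\mu_i^*\cdot x)$ and $E^X(u_i^*\cdot y\cdot v_j)=\Ind_W(E^A)^{-\frac12}\Ind_W(E^B)^{\frac12}E^Z(\mu_i^*\cdot y\cdot\eta_j)$; cancelling the central index factors via Lemma \ref{lem:index} and using $p\cdot E^Z(\xi)=E^Z(\xi)\cdot q$ to absorb the projection, the whole expression collapses (after cancelling the overall factor $\Ind_W(E^B)$) to $\la x,y\ra_B=\sum_{i,j}\la\mu_i^*\cdot x,\,E^Z(\mu_i^*\cdot y\cdot\eta_j)\ra_B\cdot q\eta_j^*$. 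Applying $E^B$ to both sides (the left side already lies in $B$) and using $E^B(q\eta_j^*)=\eta_j^* E^B(q)=\Ind_W(E^B)^{-1}\eta_j^*$ rewrites each summand as $\la\mu_i^*\cdot x,\ E^Z(\mu_i^*\cdot y\cdot\eta_j)\cdot\Ind_W(E^B)^{-1}\eta_j^*\ra_B$, which lies in $\la X,Z\ra_B$ because $E^Z(\mu_i^*\cdot y\cdot\eta_j)\cdot\Ind_W(E^B)^{-1}\eta_j^*\in Z\cdot Q\subseteq Z$. Hence $\la x,y\ra_B\in\la X,Z\ra_B$, so $\la X,X\ra_B\subseteq\la X,Z\ra_B$ and therefore $\la X,Z\ra_B=B$.

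The main obstacle, and the step demanding the most care, is the ``leftover corner'' factor $q\eta_j^*\in D$ (respectively $p\mu_i\in C$ on the left-hand side): the resolution naturally produces an element of $D$ rather than of $B$, so the computation does not land inside $\la X,Z\ra_B$ until this factor is removed. The device that makes it work is precisely the fullness of $q$, so that $\eta_j^*\in Q$ commutes with $q$ and $q\eta_j^*$ is controlled by $E^B(q)=\Ind_W(E^B)^{-1}$, combined with $Z$ being a right $Q$-module; keeping correct track of the left/right placement of the central elements $\Ind_W(E^A)$, $\Ind_W(E^B)$ through Lemma \ref{lem:index} is where the bookkeeping is heaviest. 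For ${}_A\la X,Z\ra=A$ I would run the mirror computation, resolving $x$ on the left, absorbing the projection through $E^Z(\xi)\cdot q=p\cdot E^Z(\xi)$ in the opposite direction, and using $E^A(p)=\Ind_W(E^A)^{-1}$ together with $P\cdot Z\subseteq Z$ to conclude ${}_A\la x,y\ra\in{}_A\la X,Z\ra$.
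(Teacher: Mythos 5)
Your computation is essentially sound up to the identity
$\la x,y\ra_B=\sum_{i,j}\la \mu_i^*\cdot x,\,E^Z(\mu_i^*\cdot y\cdot\eta_j)\ra_B\,\Ind_W(E^B)^{-1}\eta_j^*$,
but the concluding step fails. The elements $\eta_j$ of a quasi-basis for $E^Q\colon B\to Q$ lie in $B$, not in $Q$: if all $\eta_j$ belonged to $Q$, the quasi-basis identity $b=\sum_j\eta_jE^Q(\eta_j^*b)$ would force $B=Q$. So your assertion that ``$\eta_j^*\in Q$ commutes with $q$'' is false, and --- this is the real problem --- $E^Z(\mu_i^*\cdot y\cdot\eta_j)\cdot\Ind_W(E^B)^{-1}\eta_j^*$ lies in $Z\cdot B\subseteq X$ but not in $Z$. (The formula $E^B(q\eta_j^*)=\Ind_W(E^B)^{-1}\eta_j^*$ itself survives, because $E^B$ is a right $B$-module map and $E^B(q)$ is central; that is not where the difficulty sits.) Each summand therefore belongs only to $\la X,Z\ra_B\cdot B$. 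Note that $b\,\la x,z\ra_B=\la x\cdot b^*,z\ra_B$ keeps you inside $\la X,Z\ra_B$, whereas $\la x,z\ra_B\,b=\la x,z\cdot b\ra_B$ does not, so the closed linear span of $\la X,Z\ra_B$ is a priori only a closed \emph{left} ideal of $B$; your argument proves only that the two-sided ideal generated by $\la X,Z\ra_B$ is all of $B$, which is strictly weaker than the statement. The ``leftover corner factor'' you correctly single out as the main obstacle is not removed by your device.

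The paper sidesteps the term-wise bookkeeping entirely. Using the surjectivity of $E^Z$ and Condition (3) in Definition \ref{def:expectation}, it identifies the closed span of ${}_A \la X, Z \ra$ with that of $E^A ({}_C \la X, \, X\cdot q \ra p)\Ind_W (E^A )$, and then uses $\overline{X\cdot B}=X$ to rewrite ${}_C \la X, X\cdot q \ra$ as ${}_C \la X, X\cdot BqB \ra={}_C \la X, X\cdot D \ra$, whose span contains that of ${}_A \la X, X \ra$; everything then collapses to $E^A (Ap)\Ind_W (E^A )=A$. The fullness of $q$ enters as the span-level statement $\overline{BqB}=D$, not through any commutation of quasi-basis elements with $q$. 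If you wish to salvage your term-wise strategy, you must arrange for the right-hand factors attached to the $Z$-slot to lie in $Q$ rather than in $B$; as written, the gap is genuine.
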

\begin{proof}Since $E^Z$ is surjective by Lemma \ref {lem:satisfies},
\begin{align*}
{}_A \la X, Z \ra & ={}_A \la X, \, E^Z (X) \ra ={}_A \la X, \, \Ind_W (E^A )\cdot E^X (p\cdot X\cdot q) \ra \\
& ={}_A \la X, \, E^X (p\cdot X \cdot q) \ra \Ind_W (E^A )=E^A ({}_C \la X, \, p\cdot X\cdot q \ra )\Ind_W (E^A ) \\
& =E^A ({}_C \la X, \, X\cdot q \ra p)\Ind_W (E^A ) .
\end{align*}
Since $X\cdot B=X$ by \cite [Proposition 1.7]{BMS:quasi} and $BqB=D$,
\begin{align*}
{}_A \la X, \, Z \ra & =E^A ({}_C \la X\cdot B, \, X\cdot Bq \ra p)\Ind_W (E^A )
=E^A ({}_C \la X, \, X\cdot BqB \ra p )\Ind_W (E^A ) \\
& =E^A ({}_C \la X, \, X\cdot D \ra p)\Ind_W (E^A ) .
\end{align*}
Since $B\subset D$, $ X=X\cdot B \subset X\cdot D$ by \cite [Proposition 1.7]{BMS:quasi}.
Hence
\begin{align*}
{}_A \la X, \, Z \ra & \supset E^A ({}_C \la X, \, X \ra p)\Ind_W (E^A )
=E^A ({}_A \la X, \, X \ra p)\Ind_W (E^A ) \\
& =E^A (Ap)\Ind_W (E^A )=A .
\end{align*}
Since ${}_A \la X, \, Z \ra \subset A$, we obtain that ${}_A \la X, \, Z \ra =A$.
Similarly we obtain that $\la X, \, Z \ra_B =B$. Therefore we obtain the conclusion.
\end{proof}

\begin{cor}\label{cor:expectation3}With the above notations,
$Z$ is a $P-Q$-equivalence bimodule and $E^Z$ is a conditional
expectation from $X$ onto $Z$ with respect to $E^P$ and $E^Q$.
\end{cor}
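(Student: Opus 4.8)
The plan is to assemble the structural lemmas already proved in this section rather than to compute anything new. First I would record that, by Lemma~\ref{lem:subspace}, $Z$ is a Hilbert $P-Q$-bimodule, so the left $P$-action, right $Q$-action, and both inner products take values in $P$ and $Q$ as required; and since $Z$ is a closed subspace of the $A-B$-equivalence bimodule $X$, the imprimitivity compatibility ${}_A\la z,w\ra\cdot u=z\cdot\la w,u\ra_B$ for $z,w,u\in Z$ is inherited automatically. Thus, to conclude that $Z$ is a $P-Q$-equivalence bimodule, it remains only to verify fullness of the two inner products, namely $\overline{{}_P\la Z,Z\ra}=P$ and $\overline{\la Z,Z\ra_Q}=Q$ (with no closure actually needed in the unital setting, by Remark~\ref{remark:closure}).

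The key step is to extract this fullness from the surjectivity of $E^Z$ together with Lemma~\ref{lem:innerproduct}. First I would observe that $E^Z$ restricts to the identity on $Z$: applying Condition~(1) of Definition~\ref{def:expectation}, established in Lemma~\ref{lem:satisfies}, with the unit $1_A\in A$ gives $E^Z(z)=E^P(1_A)\cdot z=z$ for every $z\in Z$, so that $E^Z(X)=Z$. Then, using Condition~(3) of Lemma~\ref{lem:satisfies} together with the identity noted just after Definition~\ref{def:expectation},
\[
{}_P\la Z,Z\ra={}_P\la E^Z(X),Z\ra=E^P({}_A\la X,Z\ra)=E^P(A)=P,
\]
where the penultimate equality is Lemma~\ref{lem:innerproduct} and the last uses surjectivity of the conditional expectation $E^P$ onto $P$. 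Symmetrically, Condition~(6) of Lemma~\ref{lem:satisfies} together with $\la X,Z\ra_B=B$ yields $\la Z,Z\ra_Q=E^Q(\la X,Z\ra_B)=E^Q(B)=Q$. Hence both inner products are full and $Z$ is a $P-Q$-equivalence bimodule.

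With this fullness in hand, the strong Morita equivalence of the unital inclusions $P\subset A$ and $Q\subset B$ with respect to $X$ and its closed subspace $Z$ is exactly the conjunction of the containments from Lemma~\ref{lem:subspace}, the fullness $\overline{{}_P\la Z,Z\ra}=P$, $\overline{\la Z,Z\ra_Q}=Q$ just obtained, and the cross-fullness ${}_A\la X,Z\ra=A$, $\la X,Z\ra_B=B$ of Lemma~\ref{lem:innerproduct}; this is precisely the list of requirements in Definition~\ref{def:inclusion} under the dictionary $P\leftrightarrow A$, $A\leftrightarrow C$, $Q\leftrightarrow B$, $B\leftrightarrow D$, $Z\leftrightarrow X$, $X\leftrightarrow Y$. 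Finally, Lemma~\ref{lem:satisfies} states that $E^Z$ satisfies Conditions~(1)--(6) of Definition~\ref{def:expectation}, so $E^Z$ is a conditional expectation from $X$ onto $Z$ with respect to $E^P$ and $E^Q$, which is the assertion.

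I do not expect a genuine obstacle, as this is a packaging corollary; the only point requiring care is the fullness argument, which must be routed through the surjectivity $E^Z(X)=Z$ so that the cross inner products of Lemma~\ref{lem:innerproduct} become the essential input, rather than attempting a direct computation with $Z$ in isolation. Keeping the notational dictionary above straight when matching Definitions~\ref{def:inclusion} and~\ref{def:expectation} is the remaining bookkeeping.
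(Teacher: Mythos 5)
Your proof is correct and follows essentially the same route as the paper: fullness of the two inner products on $Z$ is extracted from the surjectivity $E^Z(X)=Z$ together with Condition (3)/(6) of Definition \ref{def:expectation} and the cross-fullness ${}_A\la X,Z\ra=A$, $\la X,Z\ra_B=B$ of Lemma \ref{lem:innerproduct}, after which the conditional-expectation claim is just Lemma \ref{lem:satisfies}. Your closing paragraph on the strong Morita equivalence of $P\subset A$ and $Q\subset B$ is correct but belongs to Proposition \ref{prop:equivalence} rather than to this corollary.
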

\begin{proof}First, we show that $Z$ is a $P-Q$-equivalence bimodule.
By Lemma \ref{lem:subspace}, we have only to show that $Z$ is full with the both
sided inner products. Since $E^Z$ is surjective by Lemma \ref{lem:satisfies},
\begin{align*}
{}_P \la Z, Z \ra & ={}_P \la E^Z (X), E^Z (X) \ra
=E^P ({}_A \la X, E^Z (X)\ra )
=E^P ({}_A \la X, Z \ra) \\
& =E^P (A)=P
\end{align*}
by Lemma \ref{lem:innerproduct}.
Similarly $\la Z, Z \ra_Q =Q$. Thus, $Z$ is a $P-Q$-equivalence bimodule.
Hence $E^Z$ is a conditional expectation from $X$ onto $Z$ with respect to
$E^P$ and $E^Q$.
\end{proof}

\begin{prop}\label{prop:equivalence}With the above notations, unital inclusions $P\subset A$
and $Q\subset B$ are strongly Morita equivalent with respect to the $P-Q$-
equivalence bimodule $X$ and its closed subspace $Z$ and there is a conditional expectation from
$X$ onto $Z$ with respect to $E^P$ and $E^Q$.
\end{prop}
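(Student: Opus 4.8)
The plan is to verify the two conditions of Definition \ref{def:inclusion} for the inclusions $P \subset A$ and $Q \subset B$, taking the $A-B$-equivalence bimodule $X$ in the role of the ambient equivalence bimodule and $Z$ in the role of its closed subspace, and then to read off the conditional expectation from the preparatory lemmas. Since $P = \{p\}' \cap A$ and $Q = \{q\}' \cap B$ contain the units of $A$ and $B$, the inclusions $P \subset A$ and $Q \subset B$ are unital inclusions of unital $C^*$-algebras, so by Remark \ref{remark:closure}(2) no closures are needed and it suffices to verify the algebraic fullness identities together with the bimodule compatibility conditions.

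First I would record the compatibility requirements in Definition \ref{def:inclusion}(1),(2): that $a \cdot z \in Z$ and $z \cdot b \in Z$ for $a \in P$, $b \in Q$, $z \in Z$, and that ${}_A \la z, w \ra \in P$ and $\la z, w \ra_B \in Q$ for $z, w \in Z$. All four are exactly the content of Lemma \ref{lem:subspace}, which shows that $Z$ is a Hilbert $P-Q$-bimodule in the sense of Brown, Mingo and Shen.

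Next I would supply the fullness conditions. The outer fullness $\overline{{}_A \la X, Z \ra} = A$ and $\overline{\la X, Z \ra_B} = B$ (which play the role of $\overline{{}_C \la Y, X \ra} = C$ and $\overline{\la Y, X \ra_D} = D$ in Definition \ref{def:inclusion}) is precisely Lemma \ref{lem:innerproduct}; and the inner fullness ${}_P \la Z, Z \ra = P$, $\la Z, Z \ra_Q = Q$ is established in Corollary \ref{cor:expectation3}, which also records that $Z$ is a genuine $P-Q$-equivalence bimodule. Assembling these, the inclusions $P \subset A$ and $Q \subset B$ are strongly Morita equivalent with respect to $X$ and its closed subspace $Z$.

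Finally, the conditional expectation is immediate: by Lemma \ref{lem:satisfies} the map $E^Z$ satisfies Conditions (1)-(6) of Definition \ref{def:expectation}, and Corollary \ref{cor:expectation3} already identifies $E^Z$ as a conditional expectation from $X$ onto $Z$ with respect to $E^P$ and $E^Q$. Thus this proposition is really an assembly of the preceding results, and I expect no genuine obstacle at this stage; the substantive work lies earlier, above all in the cross-fullness $\overline{{}_A \la X, Z \ra} = A$ of Lemma \ref{lem:innerproduct}, where one uses $X \cdot B = X$, $BqB = D$ and the identity $E^A(Ap)\Ind_W(E^A) = A$, and in verifying that $E^Z$ correctly intertwines the two inner products.
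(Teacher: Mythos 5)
Your proposal is correct and matches the paper's proof, which simply declares the proposition immediate from Lemma \ref{lem:subspace}, Lemma \ref{lem:satisfies} and Corollary \ref{cor:expectation3}; your only addition is to cite Lemma \ref{lem:innerproduct} explicitly for the cross-fullness, which the paper uses implicitly through Corollary \ref{cor:expectation3}. No substantive difference.
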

\begin{proof}This is immediate by Lemmas \ref{lem:subspace}, \ref{lem:satisfies} and Corollary \ref{cor:expectation3}.
\end{proof}

\begin{Def}\label{def:predual}In the above situation, $Z$ is called the
\it
downward basic construction
\rm
of $X$ for $E^X$. Also, $E^Z$ is called the
\it
pre-dual
\rm
conditional expectation of $E^X$.
\end{Def}

\section{Relation between the upward basic construction and the downward basic construction}\label{sec:between}
Let $A\subset C$ and $B\subset D$ be unital inclusions of unital $C^*$-algebras,
which are strongly Morita equivalent with respect to a $C-D$-equivalence bimodule $Y$ and its closed subspace
$X$. Let $E^A$ and $E^B$ be conditional expectations of Watatani index-finite type from $C$ and $D$ onto
$A$ and $B$, respectively. Let $E^X$ be a conditional expectation from $Y$ onto $X$
with respect to $E^A$ and $E^B$. We suppose
that $\Ind_W (E^A )\in A$ and $\Ind_W (E^B )\in B$.
Let $e_A$ and $e_B$ be the Jones' projections for $E^A$ and $E^B$, respectively.
Then by \cite [Lemma 2.1.1]{Watatani:index},
$$
A=\{a\in C \, | \, e_A a=ae_A \}, \quad B=\{b\in D \, | \, e_B b=be_B \},
$$
respectively. Let $C_1$ and $D_1$ be the $C^*$-basic constructions for $E^A$ and $E^B$,
respectively and let $E^C$ and $E^D$ be the dual conditional expectations from $C_1$
and $D_1$ onto $C$ and $D$, respectively. Then $e_A$ and $e_B$ are full projections in
$C_1$ and $D_1$, respectively by \cite [Lemma 2.1.6]{Watatani:index} and
$$
\Ind_W (E^C )=\Ind_W (E^A )\in A, \quad \Ind_W (E^D )=\Ind_W (E^B )\in B ,
$$
respectively.
Furthermore,
\begin{align*}
E^A (x) & =\Ind_W (E^C )E^C (e_A xe_A ) \quad \text{for any $x\in C$} , \\
E^B (x) & =\Ind_W (E^D )E^D (e_B xe_B ) \quad \text{for any $x\in D$} , 
\end{align*}
respectively. Let $Y_1$ be the upward basic construction for $E^X$ and $E^Y$
the dual conditional expectation of $E^X$ from $Y_1$ onto $Y$. We recall that
$Y$ can be regarded as a closed subspace of $Y_1$ by the linear map $\phi$ from
$Y$ to $Y_1$ defined by
$$
\phi(y)=\sum_{i, j}u_i \otimes E^X (u_i^* \cdot y\cdot v_j )\otimes \widetilde{v_j} ,
$$
for any $y\in Y$, where $\{(u_i , u_i^* )\}$ and $\{(v_j , v_j^* )\}$ are
quasi-bases for $E^A$ and $E^B$, respectively and
$$
Y_1 =C\otimes_A X \otimes _B \widetilde{D} .
$$
Let
$$
Z=\{y\in Y \, | \, e_A \cdot\phi(y)=\phi(y)\cdot e_B \} .
$$
By the discussions in Section \ref{sec:downward}, $Z$ is a closed subspace of $Y$ and
$Z$ is an $A-B$-equivalence bimodule.

\begin{lemma}\label{lem:coincide}With the above notations, $Z=X$
\end{lemma}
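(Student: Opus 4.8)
The plan is to prove the equality by establishing the two inclusions $X\subseteq Z$ and $Z\subseteq X$ separately. The first is essentially a restatement of Lemma \ref{lem:commute}, while the second is the substantive part and rests on the faithfulness of $E^B$. Throughout I identify $Y$ with $\phi(Y)\subseteq Y_1$.

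First I would establish $X\subseteq Z$. By Lemma \ref{lem:commute}, for every $y\in Y$ one has $e_A\cdot E^X(y)=E^X(y)\cdot e_B$. Applying this to $y=x\in X$ and using $E^X(x)=x$ gives $e_A\cdot\phi(x)=\phi(x)\cdot e_B$, so $x\in Z$ by the definition of $Z$. Hence $X\subseteq Z$.

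The core of the argument is the reverse inclusion $Z\subseteq X$. Given $w\in Z$, I would set $w_0=w-E^X(w)$. Since $Z$ is a linear subspace and $E^X(w)\in X\subseteq Z$, we have $w_0\in Z$, and by construction $E^X(w_0)=0$; the aim is to show $w_0=0$, which yields $w=E^X(w)\in X$. From $w_0\in Z$ we have $e_A\cdot\phi(w_0)=\phi(w_0)\cdot e_B$, and multiplying on the right by $e_B$ (using $e_B^2=e_B$) together with Lemma \ref{lem:commute}, which gives $e_A\cdot\phi(w_0)\cdot e_B=1\otimes E^X(w_0)\otimes\widetilde{1}=0$, shows that $\phi(w_0)\cdot e_B=0$.

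Then I would pass to the $D_1$-valued inner product. By Lemma \ref{lem:bimodule}~(4) and the Jones projection relation $e_B d e_B=E^B(d)e_B$ for $d\in D$,
$$
0=\la\phi(w_0)\cdot e_B,\,\phi(w_0)\cdot e_B\ra_{D_1}=e_B\la w_0,w_0\ra_D e_B=E^B(\la w_0,w_0\ra_D)e_B .
$$
Since $E^D(e_B)=\Ind_W(E^B)^{-1}$ is invertible, the relation $be_B=0$ forces $b=0$ for any $b\in D$ (apply $E^D$); hence $E^B(\la w_0,w_0\ra_D)=0$. As $E^B$ is faithful, being of Watatani index-finite type, and $\la w_0,w_0\ra_D\geq0$, we conclude $\la w_0,w_0\ra_D=0$, so $w_0=0$ by definiteness of the inner product. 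This gives $Z\subseteq X$, and together with the first inclusion, $Z=X$. I expect the main obstacle to be precisely this reverse inclusion, and within it the implication $\phi(w_0)\cdot e_B=0\Rightarrow w_0=0$, which is where the faithfulness of $E^B$ and the identity $\la\phi(\cdot),\phi(\cdot)\ra_{D_1}=\la\cdot,\cdot\ra_D$ from Lemma \ref{lem:bimodule}~(4) are indispensable.
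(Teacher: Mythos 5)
Your proof is correct. The inclusion $X\subseteq Z$ is the same as the paper's (the paper recomputes $e_A\cdot\phi(x)=1\otimes x\otimes\widetilde{1}=\phi(x)\cdot e_B$ directly, but this is exactly the computation behind Lemma \ref{lem:commute}, which you are entitled to quote here). For the reverse inclusion the two arguments share only the first move, namely hitting the defining relation $e_A\cdot\phi(y)=\phi(y)\cdot e_B$ with a Jones projection, and then genuinely diverge. The paper multiplies on the left by $e_A$, writes out both $e_A\cdot\phi(y)=\sum_j 1\otimes E^X(y\cdot v_j)\otimes\widetilde{v_j}$ and $e_A\cdot\phi(y)\cdot e_B=1\otimes E^X(y)\otimes\widetilde{1}$ explicitly as elements of $C\otimes_A X\otimes_B\widetilde{D}$, applies $E^Y$ to both sides, and invokes Lemma \ref{lem:basis} in the form $\sum_j E^X(y\cdot v_j)\cdot v_j^*=y$ to conclude $E^X(y)=y$ outright. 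You instead subtract off $E^X(w)$, multiply on the right by $e_B$ to obtain $\phi(w_0)\cdot e_B=0$, and finish through the inner-product identity $\la\phi(\cdot),\phi(\cdot)\ra_{D_1}=\la\cdot,\cdot\ra_D$ of Lemma \ref{lem:bimodule}, the relation $e_Bde_B=E^B(d)e_B$, the injectivity of $d\mapsto de_B$ (via $E^D(e_B)=\Ind_W(E^B)^{-1}$), and the faithfulness of $E^B$. Your route trades the explicit quasi-basis computation for a positivity/faithfulness argument; it needs two standard external facts about index-finite type expectations (faithfulness, and the value of $E^D$ on the Jones projection), whereas the paper's argument is purely algebraic and reaches the conclusion $E^X(y)=y$ in one stroke without the decomposition $w=E^X(w)+w_0$. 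Both are complete and valid.
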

\begin{proof}
For any $x\in X$,
\begin{align*}
e_A \cdot \phi(x) & =\sum_{i, j}e_A \cdot u_i\otimes E^X (u_i^* \cdot x\cdot v_j )\otimes \widetilde{v_j} \\
& =\sum_{i, j}1\otimes E^X (E^A (u_i )u_i^* \cdot x\cdot v_j )\otimes \widetilde{v_j} \\
& =\sum_j 1\otimes E^X (x\cdot v_j )\otimes\widetilde{v_j} =\sum_j 1\otimes x\cdot E^B (v_j )\otimes \widetilde{v_j} \\
& =\sum_j 1\otimes x\otimes [v_j E^B (v_j^* )]^{\widetilde{}} =1\otimes x \otimes \widetilde{1} .
\end{align*}
Similarly, $\phi(x)\cdot e_B =1\otimes x\otimes \widetilde{1}$. Hence $x\in Z$.
Thus $X\subset Z$. Also, let $y\in Z$. Since $e_A \cdot \phi(y)=\phi(y)\cdot e_B$,
$$
e_A \cdot \phi(y)=e_A^2 \cdot \phi(y)=e_A \cdot \phi(y)\cdot e_B .
$$
Also, since
$$
e_A \cdot\phi(y)=\sum_j 1\otimes E^X (y\cdot v_j )\otimes\widetilde{v_j} \quad\text{and}\quad
e_A \cdot\phi(y)\cdot e_B =1\otimes E^X (y)\otimes\widetilde{1} ,
$$
we see that
$$
\sum_j 1\otimes E^X (y\cdot v_j )\otimes \widetilde{v_j}=1\otimes E^X (y)\otimes\widetilde{1} .
$$
Using the conditional expectation $E^Y$,
$$
\Ind_W (E^A )^{-1}\cdot E^X (y) =\sum_j \Ind_W (E^A )^{-1}\cdot E^X (y\cdot v_j )\cdot v_j^* =\Ind_W (E^A )^{-1}\cdot y
$$
by Lemma \ref {lem:basis}. Thus $E^X (y)=y$, that is, $y\in X$. Therefore, we obtain the conclusion.
\end{proof}

By Lemmas \ref{lem:commute} and \ref{lem:coincide}, we obtain the following:
\begin{prop}\label{prop:expectation4}With the above notations, $X$ can be regarded as the downward
basic construction for $E^Y$ and $E^X$ can be regarded as the pre-dual conditional
expectation of $E^Y$.
\end{prop}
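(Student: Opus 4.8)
The plan is to recognize the present situation as a direct instance of the downward basic construction of Section \ref{sec:downward} and then to read off the conclusion from Lemmas \ref{lem:coincide} and \ref{lem:commute}. First I would fix the dictionary: one applies the downward construction to the strongly Morita equivalent inclusions $C\subset C_1$ and $D\subset D_1$, which are strongly Morita equivalent with respect to $Y_1$ and its closed subspace $Y$ by Corollary \ref{cor:dual}, taking $E^C$, $E^D$ in the roles of $E^A$, $E^B$, the conditional expectation $E^Y$ from $Y_1$ onto $Y$ (which exists by Lemma \ref{lem:EY}) in the role of $E^X$, and the Jones projections $e_A\in C_1$, $e_B\in D_1$ in the roles of the full projections $p$, $q$.

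To invoke that construction I would check its standing hypotheses in this instance. Since $\Ind_W (E^C )=\Ind_W (E^A )\in A\subset C$ and $\Ind_W (E^D )=\Ind_W (E^B )\in B\subset D$, the index conditions hold and $E^C$, $E^D$ are of Watatani index-finite type; moreover $e_A$, $e_B$ are full projections in $C_1$, $D_1$ with $E^C (e_A )=\Ind_W (E^C )^{-1}$ and $E^D (e_B )=\Ind_W (E^D )^{-1}$, all recorded at the beginning of this section from Watatani \cite{Watatani:index}. The downward machinery then produces the commutant subalgebras $\{e_A \}' \cap C$ and $\{e_B \}' \cap D$, which by \cite[Lemma 2.1.1]{Watatani:index} (quoted above) are exactly $A$ and $B$; and the identity $E^A (x)=\Ind_W (E^C )E^C (e_A xe_A )$ recalled above shows that the associated conditional expectations $E^P$, $E^Q$ coincide with $E^A$, $E^B$. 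Thus the downward data reproduce the original inclusions $A\subset C$, $B\subset D$ together with $E^A$, $E^B$.

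It then remains to match the closed subspace and the pre-dual expectation. By definition the downward closed subspace here is $Z=\{y\in Y \, | \, e_A \cdot\phi(y)=\phi(y)\cdot e_B \}$, precisely the set introduced before the statement, and Lemma \ref{lem:coincide} gives $Z=X$, so $X$ is the downward basic construction of $Y$ for $E^Y$. For the pre-dual conditional expectation, its defining formula in this instance reads $E^Z (y)=\Ind_W (E^C )\cdot E^Y (e_A \cdot\phi(y)\cdot e_B )$; applying Lemma \ref{lem:commute}, namely $E^Y (e_A \cdot\phi(y)\cdot e_B )=\Ind_W (E^A )^{-1}\cdot E^X (y)$, and using $\Ind_W (E^C )=\Ind_W (E^A )$, the two index factors cancel and $E^Z =E^X$. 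This identifies $E^X$ as the pre-dual conditional expectation of $E^Y$.

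The genuine computational content has already been absorbed into Lemmas \ref{lem:coincide} and \ref{lem:commute}, so no real obstacle remains at this stage. The one point demanding care is the bookkeeping of the role-reversal dictionary: confirming that the octuple $(C\subset C_1,\, D\subset D_1,\, Y_1,\, Y,\, E^C,\, E^D,\, E^Y,\, e_A,\, e_B)$ satisfies, verbatim, the hypotheses of Section \ref{sec:downward}, so that the set $Z$ and the map $E^Z$ defined there are literally the ones appearing in Lemmas \ref{lem:coincide} and \ref{lem:commute}. Once that identification is secured, the proposition is immediate.
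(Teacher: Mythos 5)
Your proposal is correct and follows exactly the paper's route: the paper's proof is the one-line "By Lemmas \ref{lem:commute} and \ref{lem:coincide}, we obtain the following," relying on precisely the role-reversal dictionary you spell out (the downward construction of Section \ref{sec:downward} applied to $C\subset C_1$, $D\subset D_1$, $Y_1\supset Y$, $E^C$, $E^D$, $E^Y$ with full projections $e_A$, $e_B$, so that $P=A$, $Q=B$, $E^P=E^A$, $E^Q=E^B$, $Z=X$ and $E^Z=E^X$). Your write-up merely makes explicit the hypothesis-checking that the paper leaves implicit in the preamble of Section \ref{sec:between}.
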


Next, let $p$ and $q$ be full projections in $C$ and $D$ satisfying that
$$
E^A(p)=\Ind_W (E^A )^{-1} , \quad E^B (q)=\Ind_W (E^B )^{-1} ,
$$
respectively.
Let $P, Q, E^P, E^Q$ and $Z, E^Z$ be as in Section \ref {sec:downward}.
We shall show that $Y$ is the upward basic construction for $E^Z$ and that $E^X$ is the
dual conditional expectation of $E^Z$. By Section \ref{sec:downward},
we can see that
$$
\Ind_W (E^P )=\Ind_W (E^A )\in P\cap C' , \quad \Ind_W (E^Q )=\Ind_W (E^B )\in Q\cap D' .
$$
Also, we can see that
$$
E^Z (x)=\Ind_W (E^A )\cdot E^X (p\cdot x\cdot q) .
$$
Furthermore, we can regard $C$ and $D$
as the $C^*$-basic constructions for $E^P$ and $E^Q$, respectively by \cite [Proposition 2.6]{OKT:rohlininclusion}.
We can also regard $p$ and $q$ as the Jones projections in
$C$ and $D$, respectively. Hence by Proposition \ref{prop:universal}, we obtain the following proposition:

\begin{prop}\label{prop:updown}With the above notations, $Y$ can be regarded as the upward basic construction
for $E^Z$ and $E^X$ can be regarded as the dual conditional expectation of $E^Z$.
\end{prop}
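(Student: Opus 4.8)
The plan is to obtain the statement directly from the uniqueness of the upward basic construction, Proposition \ref{prop:universal}, applied not to the original data but to the downward data constructed in Section \ref{sec:downward}. First I would fix the dictionary that turns the downward picture into an instance of the set-up of Section \ref{sec:upward}. From the discussion preceding this proposition we know that $C$ and $D$ are the $C^*$-basic constructions for the inclusions $P\subset A$ and $Q\subset B$ with respect to $E^P$ and $E^Q$, that $p$ and $q$ serve as the Jones projections, that $E^A$ and $E^B$ are the associated dual conditional expectations, and that $\Ind_W (E^P )=\Ind_W (E^A )\in P\cap C'$ and $\Ind_W (E^Q )=\Ind_W (E^B )\in Q\cap D'$. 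Performing the substitution
$$
A\rightsquigarrow P, \quad C\rightsquigarrow A, \quad B\rightsquigarrow Q, \quad D\rightsquigarrow B, \quad X\rightsquigarrow Z, \quad Y\rightsquigarrow X,
$$
the objects called $C_1 , D_1$ in Section \ref{sec:upward} become $C, D$, the Jones projections $e_A , e_B$ become $p, q$, and the dual expectations $E^C , E^D$ become $E^A , E^B$. Under this dictionary the upward basic construction of $X$ for $E^Z$ is, by Definition \ref{def:dual}, the $C-D$-equivalence bimodule $A\otimes_P Z\otimes_Q \widetilde{B}$, and its associated dual conditional expectation is the one whose identity with $E^X$ we must establish.

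Next I would check that $Y$ itself plays the role of the bimodule $W$ in the hypotheses stated just before Lemma \ref{lem:commute}, with $F^Y$ taken to be $E^X$. Indeed $Y$ is a $C-D$-equivalence bimodule; the index condition becomes $\Ind_W (E^P )\in P$, which holds by the displayed membership above; $X$ is a closed subspace of $Y$ and the inclusions $A\subset C$, $B\subset D$ are strongly Morita equivalent with respect to $Y$ and $X$ by our standing hypothesis; and $E^X$ is a conditional expectation from $Y$ onto $X$ with respect to the dual expectations $E^A$ and $E^B$. The only substantive hypothesis is the compatibility relation $(*)$. In the translated notation it reads $E^X (p\cdot x\cdot q)=\Ind_W (E^P )^{-1}\cdot E^Z (x)$ for $x\in X$, and this is nothing but the defining formula $E^Z (x)=\Ind_W (E^A )\cdot E^X (p\cdot x\cdot q)$ recalled above, solved for $E^X (p\cdot x\cdot q)$ and read using $\Ind_W (E^P )=\Ind_W (E^A )$; thus $(*)$ holds with no extra work.

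With every hypothesis verified, Proposition \ref{prop:universal} supplies a $C-D$-equivalence bimodule isomorphism $\theta$ from $Y$ onto $A\otimes_P Z\otimes_Q \widetilde{B}$ satisfying $E^X =E'\circ\theta$, where $E'$ denotes the dual conditional expectation of $E^Z$. This is exactly the assertion that $Y$ can be regarded as the upward basic construction for $E^Z$ and that $E^X$ can be regarded as the dual conditional expectation of $E^Z$, which finishes the argument. I expect no serious analytic obstacle here: the work lies entirely in pinning down the translation dictionary so that $E^C , E^D$ and the Jones projections of Proposition \ref{prop:universal} are correctly matched with $E^A , E^B , p, q$, and in recognizing the compatibility condition $(*)$ as a mere rewriting of the definition of $E^Z$ rather than a fresh identity requiring proof.
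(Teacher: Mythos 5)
Your proposal is correct and follows essentially the same route as the paper: the authors also set up the dictionary $P\subset A\subset C$, $Q\subset B\subset D$ with $p,q$ as Jones projections via \cite[Proposition 2.6]{OKT:rohlininclusion} and then invoke Proposition \ref{prop:universal}, with the compatibility condition $(*)$ reducing to the defining formula $E^Z (x)=\Ind_W (E^A )\cdot E^X (p\cdot x\cdot q)$. Your write-up merely makes explicit the verification of the hypotheses that the paper leaves implicit.
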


\section{The strong Morita equivalence and the paragroups}\label{sec:paragroup}
In this section, we show that the strong Morita equivalence for unital
inclusions of unital $C^*$-algebras preserves their paragroups.
We begin this section with the following easy lemmas:

\begin{lemma}\label{lem:easy1}Let $A\subset C$ and $B\subset D$ be unital inclusions of
unital $C^*$-algebras, which are strongly Morita equivalent with respect to a $C-D$-equivalence
bimodule $Y$ and its closed subspace $X$. Then $C\cdot X=X\cdot D =Y$.
\end{lemma}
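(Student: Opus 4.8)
The plan is to establish the two equalities $C\cdot X=Y$ and $X\cdot D=Y$ separately, using only the fullness conditions of Definition~\ref{def:inclusion}, the imprimitivity (associativity) relation ${}_C\la u,v\ra\cdot w=u\cdot\la v,w\ra_D$ valid in the $C$--$D$-equivalence bimodule $Y$, and the fact that, since both inclusions are unital inclusions of unital $C^*$-algebras, we have $1_A=1_C$ and $1_B=1_D$ and no closures are needed by Remark~\ref{remark:closure}(2). No reconstruction or finite-basis argument should be necessary.

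First I would record two trivial identities. Since $C$ and $D$ act unitally on $Y$ we have $C\cdot Y=Y$ and $Y\cdot D=Y$; hence $A\cdot Y\subset C\cdot Y=Y$, while $Y=1_C\cdot Y=1_A\cdot Y\subset A\cdot Y$ because $1_A=1_C$, giving $A\cdot Y=Y$. Symmetrically $Y\cdot B=Y$, using $1_B=1_D$. These will absorb the two ``extra'' full inclusions.

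Next, for $C\cdot X=Y$ the key is to rewrite $C\cdot X$ as $Y\cdot B$. Because ${}_C\la Y,X\ra=C$, every element of $C\cdot X$ is a finite sum of terms ${}_C\la w,x\ra\cdot x'=w\cdot\la x,x'\ra_D$ with $w\in Y$ and $x,x'\in X$; as $\la x,x'\ra_D\in B$ this shows $C\cdot X\subset Y\cdot B$. Conversely, writing $b=\sum_l\la x_l,x_l'\ra_D\in B=\la X,X\ra_D$ gives $w\cdot b=\sum_l{}_C\la w,x_l\ra\cdot x_l'\in C\cdot X$, so $Y\cdot B\subset C\cdot X$. Combining with the second paragraph yields $C\cdot X=Y\cdot B=Y$. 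For $X\cdot D=Y$ I would run the mirror computation, first noting $\la X,Y\ra_D=\la Y,X\ra_D^{\,*}=D^*=D$ and ${}_C\la X,X\ra=A$; using ${}_C\la x,x'\ra\cdot w=x\cdot\la x',w\ra_D$ the same two inclusions identify $X\cdot D$ with $A\cdot Y$, whence $X\cdot D=A\cdot Y=Y$.

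The only thing to get right is the bookkeeping: which fullness identity (${}_C\la Y,X\ra=C$ versus $\la X,Y\ra_D=D$, and ${}_C\la X,X\ra=A$ versus $\la X,X\ra_D=B$) pairs with which side of the imprimitivity relation, together with the elementary observations $A\cdot Y=Y$ and $Y\cdot B=Y$ that follow from the shared units $1_A=1_C$ and $1_B=1_D$. There is no genuine obstacle here; the content is entirely contained in these identities, which is consistent with the statement being labelled an easy lemma.
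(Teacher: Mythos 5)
Your argument is correct and rests on exactly the same mechanism as the paper's proof: the imprimitivity relation ${}_C\la y,x\ra\cdot x'=y\cdot\la x,x'\ra_D$ combined with the shared unit $1_B=1_D$ (resp.\ $1_A=1_C$). The only cosmetic difference is that the paper instantiates this by choosing a finite set $x_1,\dots,x_n\in X$ with $\sum_i\la x_i,x_i\ra_B=1_D$ and writing $y=\sum_i{}_C\la y,x_i\ra\cdot x_i$ directly, whereas you reach the same conclusion through the closure-free fullness identities of Remark~\ref{remark:closure}(2); both are valid.
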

\begin{proof}Since $X$ is an $A-B$-equivalence bimodule and $A\subset C$ is a unital
inclusion, there are elements $x_1 , x_2 , \dots x_n \in X$ such that $\sum_{i=1}^n \la x_i , x_i \ra_B =1_D $.
Then for any $y\in Y$,
$$
y=y\cdot 1_D =\sum_{i=1}^n y\cdot \la x_i , x_i \ra_B =\sum_{i=1}^n {}_C \la y , x_i \ra \cdot x_i .
$$
Hence we can see that $C\cdot X =Y$. Similarly we obtain that $X\cdot D =Y$.
\end{proof}

Let $A\subset C$ and $B\subset D$ be as above. Let $C\subset C_1$ and $D\subset D_1$
be unital inclusion of unital $C^*$-algebras, which are strongly Morita equivalent with
respect to a $C_1 -D_1$-equivalence bimodule $Y_1$ and its closed subspace $Y$.
We note that $X\subset Y\subset Y_1$.

\begin{lemma}\label{lem:easy2}With the above notations, the inclusion $A\subset C_1$ and
$B\subset D_1$ are strongly Morita equivalent with respect to the $C_1 -D_1$-equivalence
bimodule $Y_1$ and its closed subspace $X$.
\end{lemma}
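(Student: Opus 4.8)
The plan is to verify directly that the inclusions $A\subset C_1$ and $B\subset D_1$ satisfy the conditions of Definition \ref{def:inclusion} with respect to the $C_1 -D_1$-equivalence bimodule $Y_1$ and its closed subspace $X$. Since $X$ is closed in $Y$ and $Y$ is closed in $Y_1$, $X$ is a closed subspace of $Y_1$. The key point throughout is that the inner products are compatible along the tower $X\subset Y\subset Y_1$: because $Y$ is regarded as a $C-D$-equivalence bimodule via the restriction of the $C_1 -D_1$-valued inner products on $Y_1$, and $X$ as an $A-B$-equivalence bimodule via the restriction of the $C-D$-valued inner products on $Y$, we have ${}_{C_1}\la x, y \ra = {}_C \la x, y \ra = {}_A \la x, y \ra$ and $\la x, y\ra_{D_1} = \la x, y\ra_D = \la x, y\ra_B$ for all $x, y\in X$. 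In particular the bimodule conditions $a\cdot x\in X$, $x\cdot b\in X$, ${}_{C_1}\la x, y\ra\in A$, $\la x, y\ra_{D_1}\in B$ and the fullness ${}_{C_1}\la X, X\ra = A$, $\la X, X\ra_{D_1} = B$ follow immediately from the first strong Morita equivalence.

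The main step is to show that ${}_{C_1}\la Y_1, X\ra = C_1$. First I would observe that ${}_{C_1}\la Y_1, X\ra$ is a left ideal of $C_1$, since ${}_{C_1}\la c_1 \cdot y_1, x\ra = c_1 \, {}_{C_1}\la y_1, x\ra$ and $c_1\cdot y_1\in Y_1$ for any $c_1\in C_1$, $y_1\in Y_1$, $x\in X$. Next, restricting the first slot from $Y_1$ to $Y$ and using the compatibility of inner products, ${}_{C_1}\la Y, X\ra = {}_C \la Y, X\ra = C$, where the last equality is the fullness condition ${}_C \la Y, X\ra = C$ from the first strong Morita equivalence. Hence $C\subset {}_{C_1}\la Y_1, X\ra$, so in particular $1_{C_1} = 1_C \in {}_{C_1}\la Y_1, X\ra$. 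A left ideal containing the unit is the whole algebra, so ${}_{C_1}\la Y_1, X\ra = C_1$.

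The right-hand side is handled symmetrically: $\la Y_1, X\ra_{D_1}$ is a left ideal of $D_1$ because $d_1 \la y_1, x\ra_{D_1} = \la y_1 \cdot d_1^* , x\ra_{D_1}$, it contains $\la Y, X\ra_{D_1} = \la Y, X\ra_D = D \ni 1_{D_1}$, and therefore equals $D_1$. Since the inclusions are unital, no closures are needed, and these two fullness identities together with the routine compatibility conditions give exactly Definition \ref{def:inclusion} for $A\subset C_1$ and $B\subset D_1$. I expect the only genuine obstacle to be the two fullness statements ${}_{C_1}\la Y_1, X\ra = C_1$ and $\la Y_1, X\ra_{D_1} = D_1$; everything else is bookkeeping once the compatibility of the three layers of inner products is recorded. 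The one point to state carefully is that the $C$- and $D$-valued inner products on $Y$, and the $A$- and $B$-valued inner products on $X$, are genuinely the restrictions of the ambient inner products on $Y_1$, which is exactly what Definition \ref{def:inclusion} guarantees at each of the two levels.
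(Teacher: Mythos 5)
Your proof is correct, and the only substantive step --- the two fullness identities ${}_{C_1}\la Y_1 , X \ra =C_1$ and $\la Y_1 , X \ra_{D_1}=D_1$ --- is handled by a different mechanism than the paper's. You observe that ${}_{C_1}\la Y_1 , X \ra$ is a left ideal of $C_1$ containing ${}_{C_1}\la Y, X \ra ={}_C \la Y, X \ra =C\ni 1_{C_1}$, and conclude because a left ideal containing the unit is the whole algebra; the $D_1$-valued side is symmetric. The paper instead computes
$$
{}_{C_1}\la Y_1 , X \ra ={}_{C_1}\la Y_1 \cdot D_1 , X \ra ={}_{C_1}\la Y_1 , X\cdot DD_1 \ra
={}_{C_1}\la Y_1 , Y\cdot D_1 \ra ={}_{C_1}\la Y_1 , Y_1 \ra =C_1 ,
$$
which rests on \cite[Proposition 1.7]{BMS:quasi} and on Lemma \ref{lem:easy1} (namely $X\cdot D=Y$ and $Y\cdot D_1 =Y_1$). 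Your route is the more elementary one: it bypasses Lemma \ref{lem:easy1} entirely and uses only the fullness condition ${}_C \la Y, X \ra =C$ from Definition \ref{def:inclusion} together with unitality of $C\subset C_1$. What the paper's absorption argument buys in exchange is independence from the unit: after inserting closures it survives in a non-unital setting, whereas ``a left ideal containing $1$'' does not. The remaining bookkeeping in your write-up --- closedness of $X$ in $Y_1$, the compatibility of the three layers of inner products guaranteed by Definition \ref{def:inclusion} at each level, and the resulting bimodule and fullness conditions over $A$ and $B$ --- is exactly what the paper's ``it suffices to show'' leaves implicit, and you have recorded it correctly.
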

\begin{proof}It suffices to show that
$$
{}_{C_1} \la Y_1 , \, X \ra =C_1 \quad \la Y_1 , \, X \ra_{D_1}=D_1 .
$$
Indeed, by \cite [Proposition 1.7]{BMS:quasi} and Lemma \ref{lem:easy1},
\begin{align*}
{}_{C_1} \la Y_1 ,\, X \ra & ={}_{C_1} \la Y_1 \cdot D_1 \, , X \ra = {}_{C_1} \la Y_1 , \, X\cdot D_1 \ra
={}_{C_1} \la Y_1 , \, X\cdot DD_1 \ra \\
& ={}_{C_1} \la Y_1 , \, Y\cdot D_1 \ra ={}_{C_1} \la Y_1 , \, Y_1 \ra =C_1 .
\end{align*}
Similarly, we can prove that $\la Y_1 \, , X \ra_{D_1} =D_1$.
\end{proof}

Let $A\subset C$ and $B\subset D$ be unital inclusions of unital $C^*$-algebras,
which are strongly Morita equivalent
with respect to a $C-D$-equivalence bimodule $Y$ and its closed subspace $X$.
Then by Lemmas \ref{lem:bijection}, \ref{lem:condition} and Corollary \ref{cor:restriction},
we may assume that
$$
B=pM_n (A)p, \, D=pM_n (C)p, \, Y=(1\otimes f)M_n (C)p, \, X=(1\otimes f)M_n (A)p,
$$
where $p$ is a full projection in $M_n (A)$
and $n$ is a positive integer. We regard $X$ and $Y$ as an
$A-pM_n (A)p$-equivalence bimodule and a $C-pM_n (C)p$-equivalence bimodule in
the usual way.

\begin{lemma}\label{lem:commutant}With the above notations, we suppose that
unital inclusions of unital $C^*$-algebras $A\subset C$ and $B\subset D$ are
strongly Morita equivalent. Then the relative commutants $A' \cap C$ and $B' \cap D$ are
isomorphic.
\end{lemma}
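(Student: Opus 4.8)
The plan is to use the matrix description recorded just before the statement. By Lemmas~\ref{lem:bijection} and~\ref{lem:condition} and Corollary~\ref{cor:restriction} we may assume $B=pM_n(A)p$ and $D=pM_n(C)p$, where $p$ is a full projection in $M_n(A)$, so that $M_n(A)pM_n(A)=M_n(A)$. Then $B'\cap D=(pM_n(A)p)'\cap pM_n(C)p$, and it suffices to produce a $*$-isomorphism of this algebra onto $A'\cap C$. I would obtain it as a composite of two isomorphisms: first identify the relative commutant of the full matrix inclusion, and then cut down by the full projection $p$.

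For the first step, write $M_n(A)=A\otimes M_n(\BC)$ and $M_n(C)=C\otimes M_n(\BC)$. An element of $M_n(C)$ commuting with $1\otimes M_n(\BC)$ must lie in $C\otimes I_n$, and commuting in addition with $A\otimes I_n$ forces its entry into $A'\cap C$; hence $M_n(A)'\cap M_n(C)=(A'\cap C)\otimes I_n$, and $z\mapsto z\otimes I_n$ is a $*$-isomorphism of $A'\cap C$ onto $M_n(A)'\cap M_n(C)$. This step is routine and I would only record it.

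The main step, and the point I expect to be the chief obstacle, is to show that compression $c_p\colon z\mapsto pzp$ is a $*$-isomorphism of $M_n(A)'\cap M_n(C)$ onto $(pM_n(A)p)'\cap pM_n(C)p$. Every $z\in M_n(A)'\cap M_n(C)$ commutes with $p\in M_n(A)$, so $pzp=zp=pz$ and $c_p$ is a well-defined $*$-homomorphism into $(pM_n(A)p)'\cap pM_n(C)p$. Fullness yields $a_k,b_k\in M_n(A)$ with $\sum_k a_kpb_k=1$; if $pzp=0$ then $zp=0$, whence $z=\sum_k a_k(zp)b_k=0$, giving injectivity. Surjectivity is the delicate part. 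Given $w\in(pM_n(A)p)'\cap pM_n(C)p$ with $w=pwp$, I would set $z:=\sum_k a_kwb_k\in M_n(C)$ and verify that $z\in M_n(A)'\cap M_n(C)$ and $pzp=w$. The cleanest way to see this is to regard $M_n(A)p$ as a full imprimitivity bimodule over $M_n(A)$ and $pM_n(A)p$ (both actions by multiplication): since $w$ commutes with $pM_n(A)p$, the map $\eta\mapsto\eta w$ is a right $pM_n(A)p$-module map $M_n(A)p\to M_n(C)p$, the element $z$ is exactly the one implementing it by left multiplication (uniqueness of such an implementer follows from $\sum_k a_kpb_k=1$), and the module-map property then forces simultaneously $zm=mz$ for all $m\in M_n(A)$ and $zp=w$, hence $pzp=w$. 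Composing $c_p$ with the isomorphism of the first step yields $A'\cap C\cong B'\cap D$.

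Finally, I note that the same conclusion can be reached more symmetrically through the linking algebra of Proposition~\ref{prop:corner}: there one has a unital inclusion $K\subset L$ and full projections $p,q\in K$ with $pKp\cong A$, $pLp\cong C$, $qKq\cong B$, $qLq\cong D$, and the compression lemma above applied to $p$ and to $q$ gives $A'\cap C\cong K'\cap L\cong B'\cap D$.
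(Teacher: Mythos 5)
Your proposal is correct and follows essentially the same route as the paper: reduce to $B=pM_n(A)p$, $D=pM_n(C)p$, identify $M_n(A)'\cap M_n(C)$ with $(A'\cap C)\otimes I_n$, and then show that cutting by the full projection $p$ is an isomorphism onto $(pM_n(A)p)'\cap pM_n(C)p$, using a decomposition $\sum_k a_k p b_k=1$ both for injectivity and to build the preimage $\sum_k a_k w b_k$ in the surjectivity step. The paper carries out the last step by the direct computation you sketch (with a symmetric decomposition $\sum_i z_i p z_i^*=1$) rather than through the imprimitivity-bimodule packaging, but the content is the same.
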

\begin{proof}By the above discussions, we have only to show that
$$
A' \cap C \cong (pM_n (A)p)' \cap pM_n (C)p ,
$$
where $p$ is a projection in $M_n (A)$ satisfying the above. By routine computations,
we can see that
$$
M_n (A)' \cap M_n (C)=\{c\otimes I_n \, | \, c\in A' \cap C \} .
$$
Hence we can see that $A' \cap C\cong M_n (A)' \cap M_n (C)$. Next, we claim that
$M_n (A)' \cap M_n (C)\cong (M_n (A)\cap M_n (C))p$. Indeed, let $\pi$ be the map
from $M_n (A)' \cap M_n (C)$ onto $(M_n (A)' \cap M_n (C))p$ defined by $\pi(x)=px$
for any $x\in M_n (A)' \cap M_n (C)$. Since $p$ is a projection in $M_n (A)$, $\pi$ is
a homomorphism of $M_n (A)' \cap M_n (C)$ onto $(M_n (A)' \cap M_n (C))p$.
We suppose that $xp=0$ for an element $x\in M_n (A)' \cap M_n (C)$.
Since $p$ is full in $M_n(A)$, there are elements $z_1 , \dots, z_m \in M_n (A)$ such that
$$
\sum_{i=1}^m z_i pz_i^* =1_{M_n (A)} .
$$
Then
$$
0=\sum_{i=1}^m z_i xpz_i^* =\sum_{i=1}^m xz_i pz_i^* =x .
$$
Hence $\pi$ is injective. Thus $\pi$ is an isomorphism of $M_n (A)' \cap M_n (C)$
onto $(M_n (A)' \cap M_n (C))p$. Finally we show that
$$
(pM_n (A)p)' \cap pM_n (C)p=(M_n (A)' \cap M_n (C))p .
$$
Indeed, by easy computations, we can see that
$$
pM_n (A)p)' \cap pM_n (C)p\supset (M_n (A)' \cap M_n (C))p .
$$
We prove the inverse inclusion. Let $y\in (pM_n (A)p)' \cap pM_n (C)p$. Let
$w=\sum_{i=1}^m z_i y z_i^*$. Then for any $x\in M_n (A)$,
\begin{align*}
wx & =\sum_{i, j=1}^m z_i yz_i^* xz_j pz_j^* =\sum_{i, j=1}^m z_i ypz_i^* xz_j p z_j^*
=\sum_{i, j}^m z_i pz_i^* xz_j pyz_j^* \\
& =\sum_{j=1}^m xz_j pyz_j^* =\sum_{j=1}^m xz_j yz_j^* =xw .
\end{align*}
Hence $w\in M_n(A)' \cap M_n(C)$. On the other hand,
$$
wp= pw=\sum_{i=1}^m pz_i yz_i^* =\sum_{i=1}^m pz_i pyz_i^*
=\sum_{i=1}^m ypz_i pz_i^* =yp=y .
$$
Thus $y\in (M_n (A)' \cap M_n (C))p$. Hence
$$
(pM_n (A)p)' \cap pM_n (C)p=(M_n (A)' \cap M_n (C))p .
$$
Therefore, we obtain the conclusion.
\end{proof}

Let $A\subset C$ and $B\subset D$ be as above. We suppose that there is a conditional expectation
$E^A$ of Watatani index-finite type from $C$ onto $A$. Then by Section \ref{sec:definition},
there are a conditional expectation of Watatani index-finite type from $D$ onto $B$ and
a conditional expectation $E^X$ from $Y$ onto $X$ with respect to $E^A$ and $E^B$.
For any $n\in \BN$, let $C_n$ and $D_n$ be the $n$-th $C^*$-basic constructions
for conditional expectations $E^A$ and $E^B$, respectively. Then by Corollary \ref{cor:dual},
the inclusions $C_{n-1}\subset C_n$ and $D_{n-1}\subset D_n$ are strongly Morita equivalent
for any $n\in \BN$, where $C_0 =C$ and $D_0 =D$. Thus, by Lemma \ref{lem:easy2},
$A\subset C_n$ and $B\subset D_n$ are strongly Morita equivalent for any $n\in \BN$.

\begin{thm}\label{thm:paragroup}Let $A\subset C$ and $B\subset D$ be unital inclusions of
unital $C^*$-algebras, which are strongly Morita equivalent. We suppose that
there is a conditional expectation
of Watatani index-finite type from $C$ onto $A$.
Then the paragroups of $A\subset C$ and $B\subset D$
are isomorphic.
\end{thm}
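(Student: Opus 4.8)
The plan is to compare the two standard grids level by level and then glue the levelwise comparisons into a single grid isomorphism. Recall that the paragroup of $A\subset C$ is the double tower of higher relative commutants $\{A'\cap C_n\}$ and $\{C'\cap C_n\}$ in the tower of $C^*$-basic constructions $C\subset C_1\subset C_2\subset\cdots$, equipped with the inclusions, the Jones projections and the canonical conditional expectations; the paragroup of $B\subset D$ is defined analogously. By Theorem \ref{thm:morita} the hypothesis produces $E^B$ and $E^X$, so both towers are available. As noted just before the statement, iterating Corollary \ref{cor:dual} and Lemma \ref{lem:easy2} shows that $A\subset C_n$ is strongly Morita equivalent to $B\subset D_n$, and likewise $C\subset C_n$ to $D\subset D_n$, for every $n$, the closed subspace being the fixed $X$ (resp.\ $Y$) at each level. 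Lemma \ref{lem:commutant} then already gives isomorphisms $A'\cap C_n\cong B'\cap D_n$ and $C'\cap C_n\cong D'\cap D_n$ for each $n$.

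The decisive step is to realize all of these by one uniform formula. Because the strong Morita equivalences at every level are carried by the \emph{same} closed subspace $X$ sitting inside the successive bimodules (Lemma \ref{lem:easy2}), Lemmas \ref{lem:bijection}, \ref{lem:condition} and Corollary \ref{cor:restriction} let me choose a single positive integer $k$ and a single full projection $p\in M_k(A)$ with $B=pM_k(A)p$ and $D=pM_k(C)p$. Using the description of the tower in Section \ref{sec:duality}, where a shift in the basic-construction tower is identified with a matrix corner, this normal form propagates upward: $D_n=pM_k(C_n)p$ with the same $k$ and $p$ for all $n$. Under these identifications the isomorphism of Lemma \ref{lem:commutant} becomes the single map
$$
\Theta_n\colon A'\cap C_n\longrightarrow (pM_k(A)p)'\cap pM_k(C_n)p=B'\cap D_n, \qquad \Theta_n(c)=(c\otimes I_k)p,
$$
given by the same formula at every level, and the same formula applied to $C'\cap C_n$ gives the comparison of the second tower.

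Compatibility with the inclusions is then immediate: since $C_n\subset C_{n+1}$ and $\Theta_{n+1}$ is defined by the same formula as $\Theta_n$, one has $\Theta_{n+1}|_{A'\cap C_n}=\Theta_n$, and $\Theta_n$ restricts correctly on the overlap $C'\cap C_n\subset A'\cap C_n$; thus the $\Theta_n$ form two commuting ladders. It remains to check that $\Theta_n$ sends Jones projections to Jones projections and intertwines the canonical (Markov) conditional expectations of the two towers. This follows because, in the normal form, the $D$-tower expectations are the compressions $p\bigl((E^{A}\text{-tower expectation})\otimes\id\bigr)p$ of the amplified $C$-tower expectations and, correspondingly, the Jones projections of the $D$-tower are the images $(e\otimes I_k)p$ of those of the $C$-tower; the precise intertwining is exactly the content packaged by Theorem \ref{thm:duality} together with Lemma \ref{lem:index}.

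The main obstacle is this last point. Producing algebra isomorphisms $A'\cap C_n\cong B'\cap D_n$ is cheap once Lemma \ref{lem:commutant} is in hand; the real work is to verify that the single projection $p$ and the uniform formula $c\mapsto(c\otimes I_k)p$ respect the \emph{entire} grid datum --- inclusions, Jones projections and Markov expectations --- coherently all the way up the tower, so that the $\Theta_n$ assemble into an isomorphism of standard grids rather than a mere collection of unrelated isomorphisms. Once this coherence is secured, the paragroups of $A\subset C$ and $B\subset D$ are isomorphic, which is the assertion of the theorem.
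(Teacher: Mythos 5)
Your proposal follows the paper's own route exactly: build the towers $C_n$, $D_n$ from $E^A$, $E^B$ (Theorem \ref{thm:morita}), use Corollary \ref{cor:dual} and Lemma \ref{lem:easy2} to get strong Morita equivalence of $A\subset C_n$ with $B\subset D_n$ at every level, and conclude with Lemma \ref{lem:commutant}. The only difference is that you make explicit the coherence of the levelwise isomorphisms (single $k$, single full projection $p$, uniform formula $c\mapsto(c\otimes I_k)p$ compatible with inclusions, Jones projections and Markov expectations), a point the paper's proof leaves implicit under ``immediate by the above discussions''; your elaboration is correct and, if anything, more complete than the published argument.
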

\begin{proof}This is immediate by the above discussions and Lemma \ref{lem:commutant}.
\end{proof}







\begin{thebibliography}{99}

\bibitem{Blackadar:K-Theory}B. Blackadar, {\it K-theory for operator algebras},
M. S. R. I. Publications 5, 2nd Edition,
Cambridge Univ. Press, Cambridge, 1998.

\bibitem{BCM:crossed}R. J. Blattner, M. Cohen and S. Montgomery,
{\it Crossed products and inner actions of Hopf algebras},
Trans. Amer. Math. Soc.,
{\bf 298}
(1986),
671--711.

\bibitem{Brown:hereditary}L. G. Brown,
{\it Stable isomorphism of hereditary subalgebra of $C^*$-algebras},
Pacific J. Math.,
{\bf 71}
(1977),
335--348.

\bibitem{BGR:linking}L. G. Brown, P. Green and M. A. Rieffel,
{\it Stable isomorphism and strong Morita equivalence of $C^*$-algebras},
Pacific J. Math.,
{\bf 71}
(1977),
349--363.

\bibitem{BMS:quasi}L. G. Brown, J. Mingo and N-T. Shen,
{\it Quasi-multipliers and embeddings of Hilbert $C^*$-bimodules},
Can. J. Math.,
{\bf 46}
(1994),
1150--1174.

\bibitem{Combes:morita}F. Combes,
{\it Crossed products and Morita equivalence},
Proc. London Math. Soc.,
{\bf 49}
(1984),
289--306.

\bibitem{CKRW:equivalence}R. E. Curto, P. S. Muhly and D. P. Williams,
{\it Cross products of strong Morita equivalent $C^*$-algebras},
Proc. Amer. Math. Soc.,
{\bf 90}
(1984),
528--530.

\bibitem{ER:multiplier}S. Echterhoff and I. Raeburn,
{\it Multipliers of imprimitivity bimodules and Morita equivalence of
crossed products},
Math. Scand.,
{\bf 76}
(1995), 289--309.

\bibitem{JW:covariant}S. Jansen and S. Waldmann,
{\it The H-covariant strong Picard groupoid},
J. Pure  Appl. Algebra, 
{\bf 205}
(2006), 542--598.

\bibitem{JT:KK}K. K. Jensen and K. Thomsen,
{\it Elements of KK-theory},
Birkh$\ddot a$user,
1991.

\bibitem{KW1:bimodule}T. Kajiwara and Y. Watatani,
{\it Jones index theory by Hilbert $C^*$-bimodules and K-Thorey},
Trans. Amer. Math. Soc.,
{\bf 352}
(2000), 3429--3472.

\bibitem{KW2:discrete}T. Kajiwara and Y. Watatani,
{\it Crossed products of Hilbert $C^*$-bimodules by countable discrete groups},
Proc. Amer. Math. Soc.,
{\bf 126}
(1998), 841--851.

\bibitem{Kodaka:equivariance}K. Kodaka,
{\it Equivariant Picard groups of $C^*$-algebras with finite dimensional $C^*$-Hopf algebra coactions},
preprint, arXiv:1512.07724, Rocky Mountain J. Math., to appear.

\bibitem{KT1:inclusion}K. Kodaka and T. Teruya,
{\it Inclusions of unital $C^*$-algebras of index-finite type with depth 2 induced by saturated
actions of finite dimensional $C^*$-Hopf algebras},
Math. Scand.,
{\bf 104}
(2009),
221--248.

\bibitem{KT2:coaction}K. Kodaka and T. Teruya,
{\it The Rohlin property for coactions of finite dimensional $C^*$-Hopf algebras on
unital $C^*$-algebras},
J. Operator Theory,
{\bf 74}
(2015),
329--369.

\bibitem{KT3:equivalence}K. Kodaka and T. Teruya,
{\it The strong Morita equivalence for coactions of a finite dimensional $C^*$-Hopf algebra on unital $C^*$-algebras},
Studia Math.,
{\bf 228}
(2015),
259--294.

\bibitem{Lance:toolkit}E. C. Lance, {\it Hilbert $C^*$-modules},
A toolkit for operatros algebraists, London Math. Soc., Lecture Note Series,
{\bf 210}, 
Cambridge Univ. Press, Cambridge, 1995.

\bibitem{MT:Kac}T. Masuda and R. Tomatsu,
{\it Classification of minimal actions of a compact Kac algebra with the amenable dual},
J. Funct. Anal.,
{\bf 258}
(2010),
1965-2025.

\bibitem{OKT:rohlininclusion}H. Osaka, K. Kodaka and T. Teruya,
{\it The Rohlin property for inclusions of $C^*$-algebras with a finite Wtatani index},
Operator structures and dynamical systems, 177--195, Contemp Math.,
{\bf 503}
Amer Math. Soc., Providence, RI, 2009.

\bibitem{Packer:projective}J. A. Packer,
{\it $C^*$-algebras generated by projective representations of the discrete Heisenberg group},
J. Operator Theory,
{\bf 18}
(1987),
41--66.

\bibitem{RW:continuous}I. Raeburn and D. P. Williams,
{\it Morita equivalence and continuous -trace $C^*$-algebras},
Mathematical Surveys and Monographs, {\bf 60}, Amer. Math. Soc., 1998.

\bibitem{Rieffel:rotation}M. A. Rieffel,
{\it $C^*$-algebras associated with irrational rotations},
Pacific J. Math.,
{\bf 93}
(1981),
415--429.

\bibitem{Sweedler:Hopf}
M. E. Sweedler,
{\it Hopf algebras},
Benjamin, New York, 1969.

\bibitem{SP:saturated}W. Szyma\'nski and C. Peligrad,
{\it Saturated actions of finite dimensional Hopf {\rm *}-algebras on  
$C^*$-algebras},
Math. Scand.,
{\bf 75}
(1994),
217--239.

\bibitem{Tomiyama:projection}J. Tomiyama,
{\it On the projection of norm one in $W^*$-algebras},
Japan Acad.,
{\bf 33}
(1957),
608--612.

\bibitem{Watatani:index}Y. Watatani,
{\it Index for $C^*$-subalgebras},
Mem. Amer. Math. Soc.,
{\bf 424}
(1990).


\end{thebibliography}
\end{document}